\newcommand{\N}{\mathbb{N}}
\newcommand{\norm}[1]{\| #1 \|}
\newcommand{\C}{\mathbb{C}}
\newcommand{\unit}{\mathbf{1}}
\renewcommand{\star}{\ast}
\newcommand{\rem}{\mbox{rem}}
\newcommand{\boldA}{\mathbf{A}}
\newcommand{\boldB}{\mathbf{B}}
\newcommand{\mvn}{\sim_{\operatorname{mvn}}}
\newcommand{\zerom}{\mathbf{0}}
\newcommand{\diam}{\mbox{diam}}
\newcommand{\ran}{\mbox{ran}}
\newcommand{\primes}{\mathbb{P}}
\newcommand{\Q}{\mathbb{Q}}
\newcommand{\abs}[1]{\left\vert #1 \right\vert}
\newcommand{\embed}{\mathcal{E}}
\theoremstyle{plain}
\newtheorem{theorem}{Theorem}[section]
\newtheorem{lemma}[theorem]{Lemma}
\newtheorem{fact}[theorem]{Fact}
\newtheorem{corollary}[theorem]{Corollary}
\newtheorem{proposition}[theorem]{Proposition}
\newtheorem{question}[theorem]{Question}
\newtheorem{maintheorem}{Main Theorem}
\theoremstyle{definition}
\newtheorem{definition}[theorem]{Definition}
\newtheorem{remark}[theorem]{Remark}
\newcommand{\cstar}{$\mathrm{C}^*$}
\title{Computable $K$-theory for \cstar-algebras:  UHF algebras}
\author[C. J. Eagle]{Christopher J. Eagle${}^1$} 
\address[C. J. Eagle]
{Department of Mathematics and Statistics, University of Victoria. PO BOX 1700 STN CSC, Victoria, British Columbia, Canada. V8W 2Y2}%
\email{eaglec@uvic.ca}
\urladdr{http://www.math.uvic.ca/~eaglec}
\thanks{${}^1$ Supported by NSERC Discovery Grant RGPIN-2021-02459}
\author[I. Goldbring]{Isaac Goldbring${}^2$}
\address[I. Goldbring]{University of California, Irvine}
\email{goldbring@uci.edu}
\urladdr{https://www.math.uci.edu/~isaac}
\thanks{${}^2$  Supported by NSF grant DMS-2054477.}
\author[T. H. McNicholl]{Timothy H. McNicholl${}^3$}
\address[T. H. McNicholl]{Iowa State University}
\email{mcnichol@iastate.edu}
\urladdr{https://faculty.sites.iastate.edu/mcnichol/}
\thanks{${}^3$}
\author[R. Miller]{Russell Miller${}^4$}
\address[R. Miller]{City University of New York}
\email{russell.miller@qc.cuny.edu}
\urladdr{https://qcpages.qc.cuny.edu/~rmiller/}
\thanks{${}^4$}
\begin{document}

\begin{abstract}
We initiate the study of the effective content of $K$-theory for \cstar-algebras.  We prove that there are computable functors which associate, to a computably enumerable presentation of a \cstar-algebra $\boldA$, computably enumerable presentations of the abelian groups $K_0(\boldA)$ and $K_1(\boldA)$.  When $\boldA$ is stably finite, we show that the positive cone of $K_0(\boldA)$ is computably enumerable.  We strengthen the results in the case that $\boldA$ is a UHF algebra by showing that the aforementioned presentation of $K_0(\boldA)$ is actually computable.  In the UHF case, we also show that $\boldA$ has a computable presentation precisely when $K_0(\boldA)$ has a computable presentation, which in turn is equivalent to the supernatural number of $\boldA$ being lower semicomputable; we give an example that shows that this latter equivalence cannot be improved to requiring that the supernatural number of $\boldA$ is computable.  Finally, we prove that every UHF algebra is computably categorical. 
\end{abstract}

\maketitle

\section*{Introduction}

One of the most important invariants attached to a \cstar-algebra $\boldA$ is the $K$-theory of $\boldA$, which consists of a partially 
ordered abelian group $K_0(\boldA)$ and another abelian group $K_1(\boldA)$.  Initially arising in algebraic topology, $K$-theory has played a
central role in the classification program for \cstar-algebras.  With hindsight, Glimm's classification of Uniformly Hyperfinite (UHF) 
algebras \cite{Glimm.1960} in terms of supernatural numbers can be viewed as showing that if $\boldA$ and $\boldB$ are UHF algebras, then 
$\boldA \cong \boldB$ as \cstar-algebras if and only if $K_0(\boldA) \cong K_0(\boldB)$ as abelian groups.  Elliott then classified 
Approximately Finite-Dimensional (AF) algebras by their \emph{ordered} $K_0$ groups.  This result launched a line of research in classifying 
increasingly general classes of \cstar-algebras through $K$-theoretic data, culminating in the recent proof \cite{CastillejosEtAl.2021} that a
very wide class of \cstar-algebras (namely, the simple, separable, nuclear, unital, $\mathcal{Z}$-stable algebras satisfying the Universal Coefficient
Theorem) are classifiable by data built on their $K$-theory.
\footnote{It may seem puzzling that this large set of conditions yields a very large class of \cstar-algebras.  However, it is known that many different methods for constructing \cstar-algebras give rise to algebras that do satisfy the hypotheses of the classification theorem; see \cite{White.2023} for a survey of the \cstar-algebra classification program.}

In this paper, we address the question of whether the $K$-theory of a separable \cstar-algebra can be effectively computed.  Our main results along these lines involve the notions of presentations of \cstar-algebras and groups.  By a presentation $\boldA^\#$ of $\boldA$, we mean a countable  sequence from $\boldA$ that generates $\boldA$ as a \cstar-algebra (so only separable \cstar-algebras have presentations); the presentation is computable if there is an algorithm such that, upon input a $*$-polynomial $p(\vec x)$ with coefficients from $\mathbb{Q}(i)$ and no constant term, a tuple $\vec a$ from the aforementioned countable sequence, and $k \in \N$, returns $q\in \mathbb{Q}^{>0}$ such $|\|p(\vec a)\|-q|< 2^{-k}$.  If instead the algorithm only returns a computable sequence of upper bounds converging to $\|p(\vec a)\|$, then we say that the presentation is computably enumerable.  The definition of a presentation of a group is slightly more involved; see Subsection \ref{subsec:MvN} for the precise definition as well as what it means for a presentation of a group to be computable or computably enumerable.  (We note that the latter condition coincides with the notion of a recursive presentation of a group from algorithmic group theory.)     

Our main results are the following:

\begin{maintheorem}\label{mthm:K0}
There is a computable functor $K_0$ so that $K_0(\boldA^\#)$ is a computably enumerable presentation of the abelian group $K_0(\boldA)$ whenever $\boldA^\#$ is a computably enumerable presentation of a unital separable \cstar-algebra $\boldA$.  Moreover, if $\boldA$ is stably finite, then the positive cone of $K_0(\boldA)$ is a computably enumerable subset of $K_0(\boldA^\#)$.
\end{maintheorem}

\begin{maintheorem}\label{mthm:K1}
There is a computable functor $K_1$ so that $K_1(\boldA^\#)$ is a computably enumerable presentation of the abelian group $K_1(\boldA)$ 
whenever $\boldA^\#$ is a computably enumerable presentation of a unital separable \cstar-algebra $\boldA$.
\end{maintheorem}

Presently, we do not know if $K_0(\boldA^\#)$ is computable provided $\boldA^\#$ is computable. In the case of UHF algebras (for which computably enumerable presentations are automatically computable), we do indeed obtain stronger results.

\begin{maintheorem}
If $\boldA$ is a UHF algebra with a computable presentation $\boldA^\#$, then $K_0(\boldA^\#)$ is a computable presentation of $K_0(\boldA)$ and the positive cone $K_0(\boldA)^+$ is a computable subset of $K_0(\boldA^\#)$.
\end{maintheorem}

UHF algebras can be classified either by their $K_0$ groups or by their \emph{supernatural number} (see Section \ref{sec:UHF} below for the definition).

\begin{maintheorem}
Let $\boldA$ be a UHF algebra.  The following are equivalent:
\begin{enumerate}
\item{The \cstar-algebra $\boldA$ has a computable presentation.}
\item{The abelian group $K_0(\boldA)$ has a computable presentation.}
\item{The abelian group $K_0(\boldA)$ has a computably enumerable presentation.}
\item{The supernatural number of $\boldA$ is lower semicomputable.}
\end{enumerate}
\end{maintheorem}

Here, the supernatural number is lower semicomputable if, roughly speaking, it can be computably approximated from below; see Definition \ref{defn:frombelow} for a more precise statement.  One might have expected that a UHF algebra is computably presentable if and only if its supernatural number is in fact computable.  However, we give an example of a lower semicomputable supernatural number with Turing degree $\mathbf{0}''$ (and so, in particular, is not computable); by our theorem above, the associated UHF algebra is computably presentable. 
 
 Finally, we prove the following:

\begin{maintheorem}
Every UHF algebra is computably categorical.  That is, any two computable presentations of a given UHF algebra are computably isomorphic.
\end{maintheorem}

We develop the effective $K$-theory for the larger class of AF algebras in a sequel currently in preparation.

The paper is organized as follows.  In Section \ref{sec:prelim} we provide the necessary background in computable structure theory for \cstar-algebras.  In Section \ref{sec:K0}, we construct the functor $K_0$, while in Section \ref{sec:K1} we construct $K_1$.  Finally, in Section \ref{sec:UHF}, we establish results specific to UHF algebras.  The paper concludes with an appendix briefly discussing how our results extend to more general categories of presentations of \cstar-algebras.

We assume that the reader is familiar with the basics of the theory of \cstar-algebras and of classical computability theory. 
In the next section, we will present only the preliminaries from computable structure theory and computable analysis that 
we will directly use.  For a more thorough coverage of computability theory, we recommend \cite{Cooper.2004}.  For an 
expansive treatment of computable analysis, we recommend \cite{brattka.hertling.2021}.  
Finally, for additional background on \cstar-algebras and $K$-theory, see \cite{Rordam.Larsen.Laustsen.2000}.

Throughout the paper, all \cstar-algebras are assumed to be separable.

\section{Preliminaries}\label{sec:prelim}

\subsection{Presentations of \cstar-algebras}

\begin{definition}
Let $\boldA$ be a \cstar-algebra.  A \emph{presentation} of $\boldA$ is a sequence $(a_0, a_1, \ldots)$ of points in $\boldA$ such that the *-subalgebra of $\boldA$ generated by $\{a_i : i \in \mathbb{N}\}$ is dense in $\boldA$.  The points $a_0, a_1, \ldots$ are called the \emph{special points} of the presentation, while the points of the form $p(a_0,\ldots,a_n)$, where $p(x_0,\ldots,x_n)$ is a $*$-polynomial with coefficients from $\mathbb{Q}(i)$ with no constant term (as $n$ varies), are called the \emph{rational points} of the presentation.
\end{definition}

We note that this definition of ``presentation" is not the same as a presentation by generators and relations, but there are connections between the two notions (see \cite[Subsection 2.2]{Goldbring.2024+}).
%\begin{convention}\label{conv:boldA}
\it Throughout this paper, unless explicitly stated otherwise, $\boldA$ denotes a unital \cstar-algebra (although at some points we will consider 
nonunital algebras).\rm\  For a \cstar-algebra $\boldA$, $\boldA^\#$ denotes a presentation of $\boldA$.

Throughout, we always consider $\mathbb{C}$ as equipped with its \emph{standard presentation} whose special points are $\mathbb{Q}(i)$.
%\end{convention}

By standard techniques (see, for example, \cite[Section 2]{burton2024computable}), we can obtain an effective (but typically non-injective) list of the rational points of a presentation of a \cstar-algebra.  When we say, for example, that an algorithm takes a rational point $q$ of $\boldA^\#$ as input, we really mean that we have fixed an effective list $(q_i)_{i \in \mathbb{N}}$ of rational points of $\boldA^\#$ and the algorithm takes input an index $i$ for which $q = q_i$.

By the Church-Turing thesis, every algorithm can be represented as a Turing machine.  An \emph{index} (or \emph{code}) of an 
algorithm is a natural number that codes a representation of the algorithm as a Turing machine.

\begin{definition}\label{def:pres.comp.etc}
Fix a presentation $\boldA^\#$.
\begin{enumerate}

    \item $\boldA^\#$ is \emph{computable} if there is an algorithm that, given a rational point $q$ and a $k \in \N$, outputs a rational number $r$ such that $\abs{\norm{q} - r} < 2^{-k}$; an index of such an algorithm is an \emph{index} of $\boldA^\#$.
    
    \item $\boldA^\#$ is \emph{left c.e.} if there is an algorithm that, given a rational point $q$, enumerates an increasing sequence $(r_n)_{n \in \mathbb{N}}$ of rational numbers such that $\lim_n r_n = \norm{q}$; an index of such an algorithm is a 
    \emph{left c.e. index} of $\boldA^\#$.

    \item $\boldA^\#$ is \emph{right c.e.} if there is an algorithm that, given a rational point $q$, enumerates a decreasing sequence $(r_n)_{n \in \mathbb{N}}$ of rational numbers such that $\lim_{n\to\infty}r_n = \norm{q}$; an index of such an algorithm is a 
    \emph{right c.e. index} of $\boldA^\#$.
\end{enumerate}
\end{definition}

It is immediate from the definitions that a presentation is computable if and only if it is both left and right c.e.  Fox \cite[Theorem 3.14]{fox2022computable} has shown that if $\boldA$ is simple, then every right c.e. presentation of $\boldA$ is computable.
In the sequel, we shall refer to right-c.e. presentations simply as \emph{c.e. presentations}; for some justification behind this decision, see \cite[Theorem 3.3]{fox2022computable}.

With each presentation of $\boldA$, there is an associated class of computable points.  These are defined as follows.

\begin{definition}
An element $a\in \boldA$ is a \emph{computable point of $\boldA^\#$} if there is an algorithm such that, given $k \in \N$,
returns a rational point $b$ of $\boldA^\#$ such that $\|a-b\|< 2^{-k}$; an index of such an algorithm is a 
\emph{$\boldA^\#$-index for $a$}. 
\end{definition}

In other words, a computable point of $A^\#$ is one that can be effectively approximated by rational points of 
$\boldA^\#$ with arbitarily good precision.

Similarly, with each presentation of $\boldA^\#$, there is an associated class of computable sequences.

\begin{definition}\label{def:comp.seq}
A sequence $(p_n)_{n \in \N}$ of points in $\boldA$ is a \emph{computable sequence of $\boldA^\#$} if 
there is an algorithm that, given any $k,n \in \N$, computes a rational vector $\rho$ of $\boldA^\#$
so that $\norm{\rho - p_n} < 2^{-k}$.  An index of such an algorithm is a $\boldA^\#$-index of 
$(p_n)_{n \in \N}$.
\end{definition}

We regard the collection of presentations of \cstar algebras as a category in which the morphisms are the
computable $*$-homomorphisms.  These maps are defined as follows.

\begin{definition}
A bounded linear map $\phi:\boldA_0\to \boldA_1$ is a \emph{computable map from $\boldA_0^\#$ to $\boldA_1^\#$} if there is an algorithm that,
given a rational point $a$ of $\boldA_0^\#$ and a $k \in \N$, computes a rational point $b$ of $\boldA_1^\#$ such that 
$\|\phi(a)-b\|< 2^{-k}$; 
an index of such an algorithm is an \emph{$(A_0^\#, A_1^\#)$-index} of $\phi$.
\end{definition}

Put another way, $\phi$ is computable if the $\phi$-image of every rational point of $\boldA_0^\#$ is a computable point of $\boldA_1^\#$ 
whose index can be computed from the rational point of $\boldA_0^\#$.   
It is easily verified that the inverse of a computable $*$-isomorphism is computable as well.
We note that there is a general definition of a computable map between presentations of metric structures;
when the function is a bounded linear map between normed spaces, the general definition specializes to the definition given here.   

Suppose $\boldA$ and $\boldB$ are \cstar algebra.  Then $\boldA \times \boldB$ is a \cstar algebra, where the operations are defined
coordinate-wise and the norm of $(a,b)$ is $\max\{\norm{a}, \norm{b}\}$.  
Given presentations $\boldA^\#$ and $\boldB^\#$, we define $\boldA^\# \times \boldB^\#$ to be the presentation of 
$\boldA \times \boldB$ whose special points are the points formed by pairing rational points of $\boldA^\#$ and $\boldB^\#$.
Here, we view these points as enumerated by $\N$ by fixing some computable bijection between 
$\N$ and $\N^2$.  Note that  $\boldA^\# \times \boldB^\#$ is in fact the product in the category of presentations of \cstar algebras.  If $\boldA^\#$ and $\boldB^\#$ are computable (resp. c.e.), then it is readily verified that $\boldA^\# \times \boldB^\#$ is also computable (resp. c.e.).

%Fix a positive integer $N$.  Then, $\boldA^N$ denotes the \cstar algebra whose operations are defined coordinate-wise 
%and whose norm is the maximum norm; that is $\norm{(a_1, \ldots, a_N)} = \max_j \norm{a_j}$.  
%Given a presentation $\boldA^\#$ of $\boldA$, there is a natural presentation $(\boldA^\#)^N$ of $\boldA^N$ induced by 
%$\boldA^\#$; namely, the special points of $(\boldA^\#)^N$ is the presentation  where the special points are those of the form $(a_1, \ldots, a_N)$, with each $\boldA_i$ a special 
%point of $\boldA^\#$; here we view these special points as enumerated by $\mathbb{N}$ by fixing some computable bijection between $\mathbb{N}$
%and $\mathbb{N}^N$.  We denote this presentation of $\boldA^N$ by $(\boldA^N)^\#$.

\subsection{Matrix amplifications}

The definition of $K_0(\boldA)$ involves projections in matrix algebras over $\boldA$, so we must consider how to produce a (computable) presentation of $M_n(\boldA)$ from a (computable) presentation of $\boldA$.  We begin by fixing some notations.  Throughout, we fix a positive integer $n$.

First, we let $(E^n_{r,s}(\boldA))_{r,s \in \{1, \ldots, n\}}$ denote 
the standard system of matrix units for $M_n(\boldA)$, that is, $E^n_{r,s}(\boldA)$ is the element of $M_n(\boldA)$ whose $(r,s)$ entry is $1_\boldA$ and with all other entries $0$; when $\boldA=\mathbb{C}$, we simply write $E^n_{r,s}$ for the matrix units of $M_n(\C)$.

Throughout this paper, we let $\|\cdot\|$ denote the \cstar-norm on $M_n(\boldA)$.  Occasionally, we will have the need to consider other norms on $M_n(\boldA)$.  For $A = (a_{r,s})_{r,s \in \{1, \ldots, n\}} \in M_n(\boldA)$, we set $\norm{A}_{\max} = \max_{r,s} \norm{a_{r,s}}$ and $\norm{A}_1 = \sum_{r,s} \norm{a_{r,s}}$.  It is well-known that 
$\norm{A}_{\max} \leq \norm{A} \leq \norm{A}_1$ and thus 
$\frac{1}{n^2}\norm{A}_1 \leq \norm{A}$.
% \begin{notation}\label{not:mtrx.norms}
% Let $n$ be a positive integer.
% \begin{enumerate}
%     \item $\norm{\ }_\star$ denotes the $C^*$ norm of 
% $M_n(\boldA)$.

%     \item If $A = (a_{r,s})_{r,s \in \{1, \ldots, n\}} \in M_n(\boldA)$, then 
%     $\norm{A}_{\max} = \max_{r,s} \norm{a_{r,s}}$.

%     \item If $A = (a_{r,s})_{r,s \in \{1, \ldots, n\}} \in M_n(\boldA)$, then 
%     $\norm{A}_1 = \sum_{r,s} \norm{a_{r,s}}$.
% \end{enumerate}
% \end{notation}

% \begin{remark}\label{rmk:mtrx.norms}
% It is well-known that if $A \in M_n(\boldA)$, then 
% \[
% \norm{A}_{\max} \leq \norm{A}_\star \leq \norm{A}_1
% \]
% and
% \[\frac{1}{n^2}\norm{A}_1 \leq \norm{A}_* \leq \norm{A}_1.\]
% \end{remark}

We let $()^3:\N\to \N^3$ denote a computable bijection with component functions $()^3_0$, $()^3_1$, $()^3_2$, so $(k)^3 = ((k)^3_0, (k)^3_1, (k)^3_2))$ for all $k\in \N$.

For $m$ a nonzero integer, let $\rem(m,n)$
denote the remainder when $m$ is divided by $n$.  For each $k \in \N$, set:
\begin{eqnarray*}
\phi_0(k) & = & (k)^3_0\\
\phi_1(n,k) & = & 1 + \rem((k)^3_1, n)\\
\phi_2(n,k) & = & 1 + \rem( (k)^3_2, n).
\end{eqnarray*}

We are finally ready to define the presentation on $M_n(\boldA)$ induced by a presentation on $\boldA$.  

\begin{definition}\label{def:pres.Mn}
Suppose that $\boldA^\#$ is a presentation of $\boldA$ and let $v_m$ denote the $m$-th special point of $\boldA^\#$.  We define $M_n(\boldA^\#)$ to be the presentation of 
$M_n(\boldA)$ whose $k$-th special point is $v_{\phi_0(k)} E^n_{\phi_1(n,k), \phi_2(n,k)}(\boldA)$.
\end{definition}

Since $\norm{A}_{\max} \leq \norm{A} \leq \norm{A}_1$, 
it is straightforward to verify that $M_n(\boldA^\#)$ is indeed a presentation of $M_n(\boldA)$.  However, we have a much stronger result.

\begin{proposition}\label{prop:MnA.pres.comp}
If $\boldA^\#$ is c.e. (resp. computable), then $M_n(\boldA^\#)$ is also c.e. (resp. computable).
\end{proposition}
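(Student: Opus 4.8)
The plan is to reduce the computation of the \cstar-norm of a rational point of $M_n(\boldA^\#)$ to the computation of norms of rational points of $\boldA^\#$, and then to extract the matrix norm from entrywise data by a power-squeezing argument.

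First I would record the key bookkeeping fact: every rational point of $M_n(\boldA^\#)$ is, after expansion, a matrix $A = (a_{r,s})$ whose entries $a_{r,s}$ are rational points of $\boldA^\#$, and an index for each $a_{r,s}$ can be computed from an index for $A$. Indeed, each special point of $M_n(\boldA^\#)$ has the form $v\, E^n_{a,b}(\boldA)$ (Definition~\ref{def:pres.Mn}), a $*$-polynomial with no constant term multiplies and adjoints these, and a word in such generators collapses to a single term $w\, E^n_{a,b}(\boldA)$, where $w$ is the ordered product of the corresponding special points $v$ and their adjoints; since the defining polynomial has no constant term, each such word is a nonempty product, so $w$ is a $*$-monomial in the special points of $\boldA^\#$ with no constant term. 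Collecting the words by matrix position exhibits each entry $a_{r,s}$ as a $\mathbb{Q}(i)$-combination of such monomials, i.e. as a rational point of $\boldA^\#$, and the passage is a purely syntactic manipulation of $*$-polynomials, hence computable. The same remark applies to $A^*A$ and to its powers $(A^*A)^m$: their entries are again rational points of $\boldA^\#$ whose indices are computable from that of $A$.

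For the c.e. (that is, right-c.e.) case, I would exploit the inequalities $\norm{B}_{\max} \le \norm{B} \le \norm{B}_1$ together with $\norm{B}_1 \le n^2 \norm{B}_{\max}$ recorded before the statement, applied not to $A$ directly but to large powers of the self-adjoint element $B := A^*A$. Since $B$ is self-adjoint, the \cstar-identity gives $\norm{B^{2^k}} = \norm{B}^{2^k}$, whence setting $u_k := \norm{B^{2^k}}_1^{1/2^k}$ one has $\norm{B} \le u_k \le (n^2)^{1/2^k}\norm{B}$; thus $u_k \to \norm{B}$ while every $u_k$ remains an upper bound for $\norm{B}$. Now $\norm{B^{2^k}}_1 = \sum_{r,s}\norm{(B^{2^k})_{r,s}}$ is a sum of norms of rational points of $\boldA^\#$, so from a right-c.e. index for $\boldA^\#$ one enumerates rationals decreasing to each summand, hence to $\norm{B^{2^k}}_1$ and to $u_k$; taking the running minimum over all $k$ and all stages yields a decreasing sequence of rationals converging to $\norm{B} = \norm{A}^2$, and a final square root produces a right-c.e. approximation to $\norm{A}$. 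The whole procedure is uniform in the index for $A$, so $M_n(\boldA^\#)$ is c.e. For the computable case it then remains to produce a left-c.e. approximation, and here I would run the symmetric argument with $\norm{\cdot}_{\max}$ in place of $\norm{\cdot}_1$: setting $\ell_k := \norm{B^{2^k}}_{\max}^{1/2^k}$, the inequalities give $(n^{-2})^{1/2^k}\norm{B} \le \ell_k \le \norm{B}$, so $\ell_k \uparrow \norm{B}$ with each $\ell_k$ a lower bound; since $\norm{B^{2^k}}_{\max} = \max_{r,s}\norm{(B^{2^k})_{r,s}}$ is a maximum of norms of rational points of $\boldA^\#$, a left-c.e. index for $\boldA^\#$ lets us enumerate rationals increasing to each $\ell_k$, and the running maximum gives an increasing sequence converging to $\norm{A}^2$, hence to $\norm{A}$. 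Combining the two directions, if $\boldA^\#$ is computable then $M_n(\boldA^\#)$ is both left- and right-c.e., and therefore computable.

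I expect the only real obstacle to be the very first conceptual point: there is no exact algebraic formula reading off the \cstar-norm of a matrix from its entries, and the naive estimates $\norm{\cdot}_{\max} \le \norm{\cdot} \le \norm{\cdot}_1$ are too lossy to pin down the norm by themselves. The device that resolves this is to apply those estimates to the high powers $B^{2^k}$ of a self-adjoint element, where the multiplicative slack $(n^2)^{\pm 1/2^k}$ tends to $1$ and squeezes the entrywise quantities onto the true norm from the correct side; everything else (the syntactic extraction of entries and the uniformity of the construction in the given indices) is routine.
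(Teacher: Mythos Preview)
Your argument is correct and takes a genuinely different route from the paper. The paper identifies $M_n(\boldA)$ with $M_n(\C)\otimes\boldA$, equips the tensor product with the presentation built from the standard presentation of $M_n(\C)$ and $\boldA^\#$, and then invokes \cite[Lemma 2.4 and Proposition 2.6]{Goldbring.2024+} to conclude that this tensor presentation is c.e.\ (resp.\ computable); since the canonical isomorphism $\psi:M_n(\boldA)\to M_n(\C)\otimes\boldA$ sends rational points to rational points, the result for $M_n(\boldA^\#)$ follows. Your proof instead stays entirely inside $M_n(\boldA)$ and is self-contained: after the syntactic observation that entries of rational points of $M_n(\boldA^\#)$ are rational points of $\boldA^\#$, you recover the operator norm from entrywise data by the spectral-radius/power trick $\norm{B}^{2^k}=\norm{B^{2^k}}$ for $B=A^*A$, so that the slack in $\norm{\cdot}_{\max}\le\norm{\cdot}\le\norm{\cdot}_1$ shrinks to nothing after taking $2^k$-th roots. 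The paper's approach is shorter because it outsources the analytic work to the cited tensor-product lemmas, while yours has the advantage of being elementary and of making explicit exactly how the matrix norm is approximated from the presentation of $\boldA$; in particular, your argument visibly preserves left-c.e.\ and right-c.e.\ separately, which the tensor-product route also does but only via the cited results. One cosmetic point: a $*$-monomial in the elementary-matrix generators may collapse to $0$ (when consecutive matrix-unit indices do not match), not only to a single $wE^n_{a,b}$; this does not affect your argument, since $0$ is a rational point.
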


\begin{proof}
Set $\boldA_1 = M_n(\C) \otimes \boldA$.  
Let $\boldA_1^\#$ be the presentation of $\boldA_1$ whose $\langle m, k \rangle$-th distinguished 
point is the tensor of the $m$-th distinguished point of the standard presentation of $M_n(\C)$ (defined as in Definition \ref{def:pres.Mn} using the standard presentation of $\mathbb{C}$)
and the $k$-th distinguished point of $\boldA^\#$, where $\langle\cdot,\cdot\rangle$ is the usual computable pairing of $\N^2$ with $\N$.  
By \cite[Lemma 2.4 and Proposition 2.6]{Goldbring.2024+}, $\boldA_1^\#$ is c.e. (resp. computable) if 
$\boldA^\#$ is c.e. (resp. computable).  

Let $\psi:M_n(\boldA)\to M_n(\C) \otimes \boldA$ be the usual isomorphism.  Note that $\psi$ computably maps each 
rational point of $M_n(\boldA^\#)$ to a rational point of 
$\boldA_1^\#$.  Thus, since $\psi$ is an isomorphism, $\psi$ is 
a computable map from $M_n(\boldA^\#)$ to $\boldA_1^\#$.   Therefore, 
$M_n(\boldA^\#)$ is c.e. (resp. computable) if $\boldA^\#$ is c.e. (resp. computable).
\end{proof}

For the remainder of the paper, whenever we have a presentation $\boldA^\#$ of a \cstar-algebra $\boldA$, we always consider $M_n(\boldA)$ equipped with the presentation $M_n(\boldA^\#)$ we have defined here.

\subsection{C.e. open and c.e. closed sets}

Certain subsets of $\boldA$ with particular computability properties will play significant roles for us.  First, by a \emph{rational open ball} of $\boldA^\#$ we mean an open ball in $\boldA$ with rational radius and centered at a rational point of $\boldA^\#$.

\begin{definition}\label{def:c.e.clsd}
If $C \subseteq \boldA$ is closed, we say that $C$ is a \emph{c.e.-closed subset of $\boldA^\#$} if 
the set of all open rational balls of $\boldA^\#$ that intersect $C$ is c.e.
\end{definition}

\begin{definition}\label{def:c.e.open}
If $U \subseteq \boldA$ is open, we say that $U$ is a \emph{c.e.-open set of $\boldA^\#$} if 
there is a c.e. set $\mathcal{S}$ of rational open balls so that 
$U = \bigcup \mathcal{S}$.
\end{definition}

The import of a set $U$ being c.e.-open is that, provided the presentation is c.e., if a computable point $a$ belongs to $U$, then an effective search will eventually lead us to this fact.  Indeed, one must simply search for a rational open ball $B(x;r)$ in the c.e. set $\mathcal{S}$ from the definition and a rational point $b$ such that $\|a-b\|<\epsilon$ for rational $\epsilon$ small enough so that $\|x-b\|<r-\epsilon$.

By an index of a c.e. open set we mean an index for the c.e. set of open balls whose union comprises the set; the index of a c.e. closed set is defined analogously.

The following fact is probably folklore; we include a proof for the sake of the reader.  In the following proof (as well as later in the 
paper), it will be useful to observe that if $\boldA^\#$ is a c.e. presentation, then the subset relation between rational open balls of 
$\boldA^\#$ is also c.e.  In fact, this holds whenever $\boldA$ is a vector space, because in that case $B(\rho_0, r_0) \subseteq B(\rho_1,
r_1)$ if and only if $\norm{\rho_0 - \rho_1} + r_0 < r_1$.

\begin{lemma}\label{intersection}
Suppose that $\boldA^\#$ is a c.e. presentation.  Further suppose that $U$ is a c.e. open subset of $\boldA^\#$ and $C$ is a c.e. closed 
subset of $\boldA^\#$ such that $U\cap C\not=\emptyset$.  Then $U\cap C$ contains a computable point.  Moreover, an index of this computable
point can be computed from indices of $U$ and $C$.
\end{lemma}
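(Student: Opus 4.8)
The plan is to exhibit a computable point of $U\cap C$ as the limit of the centers of a computable nested sequence of rational open balls $B_0\supseteq B_1\supseteq\cdots$, where each $B_k$ is contained in $U$, meets $C$, has radius $<2^{-k}$, and satisfies $\overline{B_{k+1}}\subseteq B_k$. Fix the c.e.\ family $\mathcal S$ of rational balls with $U=\bigcup\mathcal S$, and let $\mathcal T$ denote the c.e.\ family of rational open balls of $\boldA^\#$ that intersect $C$ (which is exactly the c.e.\ set witnessing that $C$ is c.e.-closed). The balls $B_k$ will be produced by an effective search, uniformly in indices for $\mathcal S$ and $\mathcal T$, so that the resulting point comes with an index computable from indices of $U$ and $C$.

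At stage $k+1$, given $B_k$, I search by dovetailing over all rational open balls $B$ of radius $<2^{-(k+1)}$ for one satisfying three conditions: (i) $\overline B\subseteq B_k$; (ii) $B\subseteq B'$ for some $B'\in\mathcal S$; and (iii) $B\in\mathcal T$. Each condition is c.e.: by the remark preceding the statement, $B(\rho_0,r_0)\subseteq B(\rho_1,r_1)$ iff $\|\rho_0-\rho_1\|+r_0<r_1$, a strict inequality that is semidecidable from the (right-)c.e.\ presentation and which moreover already forces closure containment, giving (i) directly and giving (ii) after dovetailing the test over the enumeration of $\mathcal S$; and (iii) is semidecidable by definition of $\mathcal T$. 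Taking the first $B$ for which all three tests succeed defines $B_{k+1}$. The base ball $B_0$ (radius $<1$) is found the same way using only (ii) and (iii).

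The key point to verify is that each search terminates, i.e.\ that a valid candidate always exists; this is the main, though elementary, obstacle. Here I would use that any $c\in B_k\cap C$ is an interior point both of the open ball $B_k$ and of some $B'\in\mathcal S$ (since $c\in U$), so there is $\delta>0$ with $B(c;\delta)\subseteq B_k\cap B'$. Choosing a rational point $b$ of $\boldA^\#$ with $\|b-c\|$ small and a rational radius $\varepsilon<2^{-(k+1)}$ small enough, the ball $B=B(b;\varepsilon)$ satisfies $\overline B\subseteq B_k\cap B'$ yet still contains $c$; thus $B$ meets $C$ and lies in $U$, so it satisfies (i)--(iii). Hence a candidate exists at every stage, and since $U\cap C\neq\emptyset$ the base case succeeds as well.

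It remains to confirm the construction produces the desired point. Writing $\rho_k$ for the center of $B_k$, nestedness gives $\rho_m\in B_k$ for all $m\ge k$, whence $\|\rho_m-\rho_k\|<2^{-k}$; so $(\rho_k)$ is a Cauchy sequence of rational points converging to a point $a$ that is a computable point of $\boldA^\#$, with an $\boldA^\#$-index obtained uniformly from the construction. From $\overline{B_{k+1}}\subseteq B_k\subseteq U$ we get $a\in\bigcap_k B_k\subseteq U$, and picking $c_k\in B_k\cap C$ yields $\|c_k-a\|\to 0$, so $a\in C$ because $C$ is closed. Therefore $a\in U\cap C$, and its index is computed from indices of $U$ and $C$, as required.
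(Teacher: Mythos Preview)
Your proof is correct and follows essentially the same nested-ball search as the paper's proof; you are simply more explicit about the closure containment $\overline{B_{k+1}}\subseteq B_k$ (which the paper's c.e.\ inclusion test $\|\rho_0-\rho_1\|+r_0<r_1$ already gives) and about verifying that a valid candidate exists at each stage and that the limit lands in both $U$ and $C$. Your condition (ii) at every stage is redundant after the base case (since $B_{k+1}\subseteq B_k\subseteq\cdots\subseteq B_0\subseteq U$), but this is harmless.
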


\begin{proof}
First search for the first rational ball $B_0$ of radius less than $1$ that is contained in $U$ and intersects $C$.  Then search for the first
rational ball $B_1$ of radius less than $1/2$ contained in $B_0$ that intersects $C$.  Continuing in this manner, the centers of these balls 
converge to a point of $C$ that is necessarily computable.
\end{proof}

In continuous model theory, the definable sets in metric structures are characterized by a condition known as \emph{weak stability} 
(see \cite[Definition 3.2.4]{FarahEtAll.2021}).  Many proofs that sets are c.e.-closed can be described in terms of a computable version of 
weak stability.  The relevant notion of computable weak stability was isolated by Fox, Goldbring, and Hart \cite[Definition 5]
{FoxGoldbringHart.2024+}.  We recall it here (in a slighlty different guise) and take the opportunity to show that computable weak stability 
always gives rise to c.e.-closed sets.

\begin{definition}\label{def:computably.weakly.stable}
Suppose that $\vec{x} = (x_1, \ldots, x_N)$ is a tuple of variables, $C_1, \ldots, C_N \in \N$, 
and $p_1(\vec{x}), \ldots, p_M(\vec{x})$ are rational $*$-polynomials.  Consider the set 
$$\mathcal{R} = \{p_i(\vec{x})=0 \ : \ i=1,\ldots,M\}\cup\{\norm{x_j} \leq C_j \ : \ j=1,\ldots,N\}$$ of relations.
\begin{enumerate}
    \item For a \cstar algebra $\boldB$ and $w_1, \ldots, w_N \in \boldB$, 
    let $\mathcal{R}^{\boldB}$ denote the quantity 
    $$
    \max(\{\norm{p_j(\vec{w})}\ :\ j \in \{1, \ldots, M\}\} \cup \{\norm{w_j}- C_j\ : j \in \{1, \ldots, N\}\}$$
    and write $\boldB \models \mathcal{R}(\vec{w})$ if 
    $p_j(\vec{w}) = 0$ for all $j \in \{1, \ldots, M\}$ and $\norm{w_j} \leq C_j$ for all $j \in \{1, \ldots, N\}$.

    \item A function $g : \N \rightarrow \N$ is a \emph{modulus of weak stability} for 
    $\mathcal{R}$ provided that, for every $k \in \N$, every \cstar algebra $\boldB$, 
    and every $w_1, \ldots, w_N \in \boldB$, if $\mathcal{R}^\boldB(\vec{w}) < 2^{-g(k)}$, 
    then there exists $z_1, \ldots, z_N \in \boldB$ so that 
    $\max_j \norm{w_j - z_j} < 2^{-k}$ and $\mathcal{B} \models \mathcal{R}(\vec{z})$.

    \item We say that $\mathcal{R}$ is \emph{weakly stable} if it has a modulus of weak stability and \emph{computably weakly stable} if it has a computable modulus of weak stability.
\end{enumerate}
\end{definition}

% Our statement of the definition of computable weak stability of relations is easily seen to be equivalent to the one given in \cite[Definition 5]{FoxGoldbringHart.2024+}.

\begin{theorem}\label{thm:cws.ce.closed}
Suppose that $\boldA^\#$ is a c.e. presentation and suppose that $E \subseteq \boldA^N$ is defined by a computably weakly stable set of 
relations.  Then $E$ is a c.e.-closed subset of $(\boldA^\#)^N$.
\end{theorem}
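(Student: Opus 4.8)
The plan is to exhibit an algorithm that enumerates exactly the rational open balls of $(\boldA^\#)^N$ meeting $E$. First I would record the reformulation $E = \set{\vec w \in \boldA^N}{\mathcal{R}^{\boldA}(\vec w) \le 0}$, which holds because $\boldA \models \mathcal{R}(\vec w)$ exactly when every $\norm{p_j(\vec w)} = 0$ and every $\norm{w_j} - C_j \le 0$. The essential preliminary observation is that, for a rational tuple $\vec\sigma = (\sigma_1, \ldots, \sigma_N)$ of $(\boldA^\#)^N$, the real number $\mathcal{R}^{\boldA}(\vec\sigma)$ is upper semicomputable, uniformly in $\vec\sigma$: it is the maximum of the finitely many quantities $\norm{p_j(\vec\sigma)}$ and $\norm{\sigma_j} - C_j$, each of which is the norm of a $\ast$-polynomial combination of rational points of $\boldA^\#$ (shifted, in the second case, by a rational constant), and such norms are limits of computable decreasing sequences of rationals because $\boldA^\#$ is c.e. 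The same c.e.-ness makes the product-norm quantities $\norm{\sigma_j - \rho_j}$ upper semicomputable, so any strict inequality of the form $\max_j \norm{\sigma_j - \rho_j} + 2^{-k} < r$ with rational data is confirmable in finite time whenever it is true.

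With these approximations in hand, the enumeration procedure dovetails over all triples consisting of a rational ball $B(\vec\rho; r)$ of $(\boldA^\#)^N$, a rational tuple $\vec\sigma$, and a number $k \in \N$, where $g$ is the given computable modulus of weak stability for $\mathcal{R}$. Using the upper approximations, it searches for a stage at which both $\mathcal{R}^{\boldA}(\vec\sigma) < 2^{-g(k)}$ and $\max_j \norm{\sigma_j - \rho_j} + 2^{-k} < r$ are confirmed; when such a stage is found, the procedure enumerates $B(\vec\rho; r)$.

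For soundness, suppose the procedure enumerates $B(\vec\rho; r)$ on account of some $\vec\sigma$ and $k$. Since $\mathcal{R}^{\boldA}(\vec\sigma) < 2^{-g(k)}$, weak stability (applied with $\boldB = \boldA$) yields $\vec z \in \boldA^N$ with $\boldA \models \mathcal{R}(\vec z)$ --- that is, $\vec z \in E$ --- and $\max_j \norm{\sigma_j - z_j} < 2^{-k}$. Then $\max_j \norm{z_j - \rho_j} \le \max_j \norm{z_j - \sigma_j} + \max_j \norm{\sigma_j - \rho_j} < 2^{-k} + \max_j \norm{\sigma_j - \rho_j} < r$, so $\vec z \in B(\vec\rho; r) \cap E$ and the ball genuinely meets $E$. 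For completeness, suppose $\vec w \in B(\vec\rho; r) \cap E$. Choose $k$ large enough that $\max_j \norm{w_j - \rho_j} + 2^{-k+1} < r$, and then, by density of the rational points together with continuity of $\mathcal{R}^{\boldA}$, choose a rational $\vec\sigma$ so close to $\vec w$ that $\mathcal{R}^{\boldA}(\vec\sigma) < 2^{-g(k)}$ (possible because $\mathcal{R}^{\boldA}(\vec w) \le 0$) and $\max_j \norm{\sigma_j - \rho_j} + 2^{-k} < r$. Both inequalities are strict and hence confirmed by the upper approximations, so the procedure enumerates $B(\vec\rho; r)$. Since the entire construction is effective from a c.e. index for $\boldA^\#$, the relations, and the modulus $g$, an index for $E$ as a c.e.-closed set is obtained uniformly.

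The main obstacle is the bookkeeping forced by the product (max) norm together with the one-sidedness of the available approximations: the norms that control both membership and proximity are only upper semicomputable, so the argument must be arranged entirely around strict inequalities carrying a built-in $2^{-k}$ of slack, with weak stability used to convert an approximate, confirmable witness $\vec\sigma$ into an exact point $\vec z \in E$ lying inside the ball. Getting the slack accounting right --- so that the perturbation guaranteed by $g$ keeps $\vec z$ within radius $r$ of $\vec\rho$ --- is the delicate point; everything else is routine dovetailing.
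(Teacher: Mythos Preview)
Your proof is correct and follows essentially the same strategy as the paper's. The paper packages the argument slightly differently by first defining an auxiliary c.e.\ set $S_0$ of small balls $B(\vec a;2^{-k})$ with $\mathcal{R}^{\boldA}(\vec a)<2^{-g(k)}$, and then enumerating those rational balls that contain a member of $S_0$; your version unfolds this into a direct dovetailing search over triples $(B(\vec\rho;r),\vec\sigma,k)$ with the explicit slack condition $\max_j\norm{\sigma_j-\rho_j}+2^{-k}<r$ in place of ball inclusion. The soundness and completeness arguments are the same in substance.
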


\begin{proof}
Let $\mathcal{R} = \{p_i(\vec{x})=0 \ : \ i=1,\ldots,M\}\cup\{\norm{x_j} \leq C_j \ : \ j=1,\ldots,N\}$
be a computably weakly stable set of relations that defines $E$.   
Fix a computable modulus of weak stability $g$ for $\mathcal{R}$.  

Let $S_0$ be the set of all rational balls of $(\boldA^\#)^N$ of the form $B(\vec{a}; 2^{-k})$, 
where $\max_i \norm{p_i(\vec{a})} < 2^{-g(k)}$ and $\norm{a_j} < C_j + 2^{-g(k)}$ for all $j=1,\ldots,N$. 
It is clear that $S_0$ is a c.e. set.

We now note that, for every $\vec{u} \in E$ and every $\epsilon > 0$, there is a ball $B(\vec{a}; 2^{-k}) \in S_0$ 
such that $u \in B(\vec{a}; r)$ and $2^{-k} < \epsilon$.  To see this, choose $k \in \N$ so that 
$2^{-k} < \epsilon$. 
%Indeed, by shrinking $\epsilon$ if necessary, we may assume that $\epsilon$ is rational.  
Using the continuity of the polynomials $p_i(\vec{x})$, we may choose a tuple of rational vectors $\vec{a}$ 
sufficiently close to $\vec{u}$ so that $\norm{\vec{u} - \vec{a}} < 2^{-k}$, $\max_i \norm{p_i(\vec{a})} < 2^{-g(k)}$, 
and $\max_j \norm{a_j} - C_j < 2^{-g(k)}$.  Then $B(\vec{a}; 2^{-k})$ is as required.

The theorem now follows by noticing that for a rational open ball $B$ of $(\boldA^N)^\#$, $B$ contains an element of $E$ if and only if $B$ 
includes an element of $S_0$.  Indeed, first suppose that $B$ contains the element $\vec{e}$ of $E$ and set 
$\epsilon = d(p, \boldA^N \setminus B)$.  By the previous paragraph, there is $B' \in S_0$ so that $\vec e \in B'$ and 
$\diam(B') < \epsilon$, whence $B' \subseteq B$.  In the other direction, suppose that $B$ includes 
the element $B(\vec a;r)$ of $S_0$.  Then by the definitions of $g$ and $S_0$, there is $\vec e\in E$ such that 
$\|\vec a-\vec e\|<r$.  Hence, $\vec e\in B(\vec a;r)\subseteq B$.   
\end{proof}

  Of particular importance for us are the relations defining projections and systems of matrix units, both of which are shown to be given by computably weakly stable relations in \cite[Examples 12]{FoxGoldbringHart.2024+}.  We therefore have the following two corollaries.

\begin{corollary}\label{prop:c.e.clsd}
If $\boldA^\#$ is a c.e. presentation of a \cstar-algebra $\boldA$, then the set of all projections in $\boldA$ is a c.e.-closed subset of $\boldA^\#$.
\end{corollary}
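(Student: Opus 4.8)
The plan is to obtain this as an immediate instance of Theorem \ref{thm:cws.ce.closed} with $N = 1$. First I would identify the defining relations: an element $x \in \boldA$ is a projection exactly when $x = x^*$ and $x^2 = x$, so the set of projections is the subset $E \subseteq \boldA^1$ cut out by the rational $*$-polynomial equations $x - x^* = 0$ and $x^2 - x = 0$, together with the norm bound $\norm{x} \leq 1$. (The norm bound is in fact automatic once the two equations hold, since a self-adjoint idempotent has norm $0$ or $1$; but including it puts the relations into the exact form required by Definition \ref{def:computably.weakly.stable}.) These are finitely many rational $*$-polynomial equations plus a single norm bound, so they constitute a set of relations $\mathcal{R}$ in the sense of that definition, and $E$ is precisely the set it defines.

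Next I would invoke the fact, recorded in the paragraph preceding the corollary and established in \cite[Examples 12]{FoxGoldbringHart.2024+}, that the projection relations $\mathcal{R}$ are \emph{computably} weakly stable. Granting this, Theorem \ref{thm:cws.ce.closed} applies verbatim (with $N = 1$) and yields that $E$ is a c.e.-closed subset of $(\boldA^\#)^1 = \boldA^\#$, which is exactly the assertion.

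The corollary itself therefore presents no real obstacle beyond matching definitions; all of the genuine mathematical content is packaged into the computable weak stability of $\mathcal{R}$, imported from \cite[Examples 12]{FoxGoldbringHart.2024+}. For the reader's orientation, the substance of that ingredient is a quantitative perturbation statement proved via continuous functional calculus: if $\norm{x - x^*}$ and $\norm{x^2 - x}$ are both smaller than $2^{-g(k)}$ (and $\norm{x}$ is not much larger than $1$), then the spectrum of the self-adjoint part of $x$ concentrates near $\{0,1\}$, and applying a fixed continuous function equal to $0$ near $0$ and $1$ near $1$ returns an exact projection within $2^{-k}$ of $x$; tracking the constants yields the computable modulus $g$. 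This is the only place where one actually works, and it resides in the cited reference rather than in the corollary.
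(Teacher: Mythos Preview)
Your proposal is correct and matches the paper's approach exactly: the corollary is stated immediately after Theorem~\ref{thm:cws.ce.closed} as a direct consequence of that theorem together with the computable weak stability of the projection relations from \cite[Examples~12]{FoxGoldbringHart.2024+}, with no further argument given. Your additional remarks on the norm bound and the functional-calculus content of the cited reference are accurate and helpful, but go beyond what the paper itself records.
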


A matrix $(a_{r,s})_{r,s} \in M_n(\boldA)$ is an
\emph{$n \times n$ system of matrix units for $\boldA$} if 
$a_{r,s}^* = a_{s,r}$ and if $a_{r,s}a_{r',s'} = \delta_{s,r'} a_{r,s'}$. 
We may also regard an $n \times n$ system of 
matrix units as a point in $\boldA^{n^2}$.

\begin{corollary}\label{cor:matrix.units.c.e.clsd}
If $\boldA^\#$ is a c.e. presentation of a \cstar-algebra $\boldA$, then for every positive integer $n$,
the set of all $n \times n$ systems of
matrix units of $\boldA$ is a c.e.-closed subset of $(\boldA^\#)^{n^2}$ and of 
$M_n(\boldA^\#)$.
\end{corollary}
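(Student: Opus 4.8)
The plan is to derive both assertions from Theorem \ref{thm:cws.ce.closed} together with the fact, recorded in \cite[Examples 12]{FoxGoldbringHart.2024+}, that the relations defining a system of matrix units are computably weakly stable. For the first assertion I would view a candidate system of matrix units as a tuple $\vec x = (x_{r,s})_{r,s \in \{1,\dots,n\}}$ of $n^2$ variables and take $\mathcal R$ to be the relations $x_{r,s}^* - x_{s,r} = 0$ and $x_{r,s}x_{r',s'} - \delta_{s,r'}x_{r,s'} = 0$, together with the norm bounds $\norm{x_{r,s}} \le 1$. These bounds are automatic, since in any genuine system each $x_{r,s}$ is a partial isometry (for instance $x_{r,s}^* x_{r,s} = x_{s,s}$ is a projection), hence of norm at most $1$. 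By \cite[Examples 12]{FoxGoldbringHart.2024+} the set $\mathcal R$ is computably weakly stable, so Theorem \ref{thm:cws.ce.closed} with $N = n^2$ shows that its solution set $E \subseteq \boldA^{n^2}$, which is exactly the set of $n \times n$ systems of matrix units regarded as tuples, is a c.e.-closed subset of $(\boldA^\#)^{n^2}$.

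For the second assertion I would pass along the canonical linear bijection $\Phi \colon \boldA^{n^2} \to M_n(\boldA)$ given by $(a_{r,s})_{r,s} \mapsto \sum_{r,s} a_{r,s} E^n_{r,s}(\boldA)$, which carries $E$ onto the set $\Phi(E)$ of systems of matrix units regarded as elements of $M_n(\boldA)$. The crucial observation is that $\Phi$ is a bi-Lipschitz computable isomorphism between $(\boldA^\#)^{n^2}$ and $M_n(\boldA^\#)$: the product norm on $\boldA^{n^2}$ is exactly $\norm{\Phi(\cdot)}_{\max}$, and the recorded inequalities $\norm{A}_{\max} \le \norm{A} \le \norm{A}_1 \le n^2 \norm{A}_{\max}$ bound the norm distortion of $\Phi$ and $\Phi^{-1}$ by $n^2$, while both maps send rational points to uniformly computable points of the target presentation. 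Consequently, a rational ball $B$ of $M_n(\boldA^\#)$ meets $\Phi(E)$ if and only if its preimage $\Phi^{-1}(B)$—a c.e.-open subset of $(\boldA^\#)^{n^2}$, uniformly in $B$, because $\Phi^{-1}$ is a computable Lipschitz map—meets $E$; and this is semidecidable, since one may search the c.e. family of balls comprising the c.e.-open set $\Phi^{-1}(B)$ and test each for intersection with the c.e.-closed set $E$. Thus the rational balls of $M_n(\boldA^\#)$ meeting $\Phi(E)$ form a c.e. set, giving c.e.-closedness in $M_n(\boldA^\#)$.

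The bulk of the difficulty, and the only genuinely quantitative step, is this last transfer: one must track the distortion constant $n^2$ when converting between rational balls of the product presentation and of $M_n(\boldA^\#)$, and verify—exactly as in the proof of Proposition \ref{prop:MnA.pres.comp}—that the identification $\Phi$ respects rational points, so that preimages of rational balls are uniformly c.e.-open. Everything else is a direct appeal to the cited weak-stability computation and to Theorem \ref{thm:cws.ce.closed}.
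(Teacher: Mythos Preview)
Your proposal is correct and follows essentially the same approach as the paper. The paper's proof is extremely terse---it simply cites the weak-stability result for the $(\boldA^\#)^{n^2}$ case and says the $M_n(\boldA^\#)$ case ``follows from the inequalities $\norm{A}_{\max} \leq \norm{A} \leq \norm{A}_1$''---while you have spelled out in detail the bi-Lipschitz transfer argument that those inequalities are implicitly invoking.
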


\begin{proof}
Suppose $\boldA^\#$ is a c.e. presentation of a \cstar-algebra $\boldA$, and 
let $E$ denote the set of $n \times n$ systems of matrix units for $\boldA$.  
That $E$ is a c.e. closed subset of $(\boldA^\#)^{n^2}$ follows directly now from 
the aforementioned result of \cite{FoxGoldbringHart.2024+}  while the fact that it is a c.e. closed subset of $M_n(\boldA^\#)$
 follows from the inequalities $\norm{A}_{\max} \leq \norm{A} \leq \norm{A}_1$.
\end{proof}

In Section \ref{sec:UHF}, we will need the following variant of the previous corollary:

\begin{corollary}\label{cor:matrix.units.c.e.clsd.variant}
If $\boldA^\#$ is a c.e. presentation of a \cstar-algebra $\boldA$, then for every positive integer $n$,
the set of all $n \times n$ systems $(a_{r,s})_{r,s}$ of
matrix units of $\boldA$ for which $\sum_{r=1}^n a_{rr}=1_\boldA$ is a c.e.-closed subset of $(\boldA^\#)^{n^2}$ and of 
$M_n(\boldA^\#)$.
\end{corollary}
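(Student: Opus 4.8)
The plan is to deduce this from Theorem~\ref{thm:cws.ce.closed} exactly as in Corollary~\ref{cor:matrix.units.c.e.clsd}, after checking that the additional requirement $\sum_{r=1}^n a_{rr}=1_\boldA$ can be folded into a computably weakly stable set of relations. Writing $\vec x=(x_{rs})_{r,s=1}^n$, the set $E$ of systems of matrix units is already defined by the computably weakly stable relations $x_{rs}^\ast-x_{sr}=0$ and $x_{rs}x_{r's'}-\delta_{s,r'}x_{rs'}=0$ coming from \cite{FoxGoldbringHart.2024+}; I would adjoin the single relation $\bigl(\sum_r x_{rr}\bigr)-1_\boldA=0$ and argue that the enlarged system remains computably weakly stable over unital \cstar-algebras.

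The key point making this work is a rigidity phenomenon for the unit: if $p$ is a projection in a unital \cstar-algebra and $\norm{1-p}<1$, then $1-p$ is a projection of norm less than one, hence $1-p=0$, so $p=1$. I would use this to upgrade the matrix-unit modulus $g$ to a modulus for the enlarged system. Given $\vec w$ in a unital algebra $\boldB$ satisfying the matrix-unit relations to within the appropriate tolerance and with $\norm{1-\sum_r w_{rr}}<\tfrac12$, weak stability for $E$, applied at a level $k''\ge k$ with $2^{-k''}<\tfrac{1}{2n}$, produces exact matrix units $\vec z$ with $\norm{\vec w-\vec z}<\min(2^{-k},\tfrac{1}{2n})$; then $p:=\sum_r z_{rr}$ is a projection with $\norm{1-p}\le\norm{1-\sum_r w_{rr}}+\sum_r\norm{w_{rr}-z_{rr}}<1$, so $p=1_\boldA$ and $\vec z$ is a \emph{full} system of matrix units within $2^{-k}$ of $\vec w$. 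This yields a computable modulus for the enlarged relations, and Theorem~\ref{thm:cws.ce.closed} then gives that the set in question is a c.e.-closed subset of $(\boldA^\#)^{n^2}$. The statement for $M_n(\boldA^\#)$ follows, exactly as in Corollary~\ref{cor:matrix.units.c.e.clsd}, from the inequalities $\norm{A}_{\max}\le\norm{A}\le\norm{A}_1$.

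The step I expect to be the main obstacle is reconciling the constant $1_\boldA$ with the weak-stability machinery of Definition~\ref{def:computably.weakly.stable}, which is phrased for rational $\ast$-polynomials with no constant term over \emph{arbitrary} \cstar-algebras. Two issues arise. First, the relation $\sum_r x_{rr}=1_\boldA$ only makes sense over unital algebras, so the modulus argument above must be carried out over unital $\boldB$ with the unit admitted as a constant; this is harmless since the target algebra is unital, and it is essentially the unital matrix-unit case treated in \cite{FoxGoldbringHart.2024+}. Second, and more delicately, when running the proof of Theorem~\ref{thm:cws.ce.closed} one must enumerate rational balls on which $\norm{1_\boldA-\sum_r\rho_{rr}}$ is small, and this requires upper bounds on $\norm{1_\boldA-q}$ for rational points $q$; such bounds are available once $1_\boldA$ is a computable point of $\boldA^\#$ (in particular whenever $\boldA^\#$ is computable, as in the intended applications to UHF algebras). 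It is precisely the rigidity fact above that lets an \emph{approximate} unit condition certify an \emph{exact} one, so that only a single finitely checkable inequality, rather than the infinite family $\{q\,w_j=w_j\}$ expressing ``$q=1_\boldA$'' on a dense set of rational points $w_j$, needs to be verified.
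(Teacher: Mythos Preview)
Your approach is correct and is, in outline, the same as the paper's: show that the augmented set of relations (matrix units together with $\sum_r x_{rr}=1_\boldA$) is computably weakly stable over unital algebras, then invoke Theorem~\ref{thm:cws.ce.closed}. Your execution of the weak-stability step, however, is cleaner than what appears in the paper. The paper, after producing genuine matrix units $(b_{r,s})$ close to the given tuple, attempts to absorb the defect $q=1_\boldA-\sum_r b_{rr}$ by redefining $b_{nn}':=b_{nn}+q$ and leaving the other $b_{rs}$ unchanged. As written this does not quite yield a system of matrix units (for $r\ne n$ one gets $b'_{n,r}b'_{r,n}=b_{n,n}\ne b'_{n,n}$ unless $q=0$); the natural fix is precisely your rigidity observation that $q$ is a projection of norm $n\epsilon+\delta<1$, hence $q=0$, so the $(b_{r,s})$ already sum to the unit. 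In effect you have supplied the missing line.

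Your third paragraph raises a real point that the paper does not address: the relation $\sum_r x_{rr}=1_\boldA$ involves the constant $1_\boldA$, which is outside the scope of Definition~\ref{def:computably.weakly.stable} as stated (rational $*$-polynomials with no constant term), and the proof of Theorem~\ref{thm:cws.ce.closed} needs upper bounds on $\norm{1_\boldA-\sum_r\rho_{rr}}$ for rational tuples $\rho$. As you note, this is available once $1_\boldA$ is a computable point of $\boldA^\#$; in the only place the corollary is invoked (Lemma~\ref{lm:UHF.comp.ext}) the presentation is computable and Proposition~\ref{prop:UHF.unit} supplies exactly this. So your diagnosis of both the obstacle and its resolution is accurate; the paper simply leaves this implicit.
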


\begin{proof}
Let $\mathcal{R}$ be the relations obtained from the above relations for matrix units with the additional relation $\sum_{r=1}^n x_{rr}=1$.  Fix $\epsilon>0$ and suppose that $\mathcal{R}^\boldA(a_{r,s})<\delta$ for $\delta>0$ to be specified shortly.  If $\delta$ is sufficiently small, by the previous corollary, there are actual matrix units $(b_{r,s})_{r,s} \in M_n(\boldA)$ such that $\|a_{r,s}-b_{r,s}\|<\epsilon$ for all $r,s=1,\ldots,n$.  Note that the $b_{rr}$'s are orthogonal projections, whence $1_\boldA-\sum_{r=1}^n b_{rr}$ is a projection.  Moreover, $\|1_\boldA-\sum_{r=1}^n b_{rr}\|< n\epsilon+\delta$.  Defining $b'_{rs} =b_{rs}$ if $(r,s)\not=(n,n)$ and $b_{nn}'=b_{nn}+(1_\boldA-\sum_{r=1}^n b_{rr})$, we have that $\boldA\models \mathcal{R}(b_{rs}')=0$ and $\|a_{rs}-b_{rs}\|<(n+1)\epsilon+\delta$ for all $r,s=1,\ldots,n$.  This suffices to establish the proof of the corollary.   
\end{proof}

\section{Computing $K_0$}\label{sec:K0}

The goal of this section is to establish the existence of a computable functor that takes a c.e. presentation of a \cstar-algebra $\boldA$ and produces a c.e. presentation of $K_0(\boldA)$.  

\subsection{Murray-von Neumann equivalence}
We begin by understanding the complexity of the Murray-von Neumann equivalence relation on projections in matrix amplifications of $\boldA$.  We let $P_n(\boldA)$ denote the set of all projections  in $M_n(\boldA)$, and set $P_{<\omega}(\boldA) = \bigcup_{n\in\mathbb{N}}P_n(\boldA)$.  For $p, q \in P_{<\omega}(\boldA)$, we set
\[p \oplus q = \left( 
\begin{array}{cc}
p & \zerom \\
\zerom & q\\
\end{array}
\right),\]
and note that $p \oplus q \in P_{<\omega}(\boldA)$.  Recall that $p \in P_m(\boldA)$ and $q \in P_n(\boldA)$ are \emph{Murray von-Neumann equivalent}, denoted $p \mvn q$, if there exists $v \in M_{m,n}(\boldA)$ so that $p = vv^*$ and $q = v^*v$.  We let $[p]_{\mvn}$ denote the Murray-von Neumann equivalence class of $p$.

\begin{theorem}\label{thm:mvn.ce}
Suppose $\boldA^\#$ is a c.e. presentation of $\boldA$. Then there is a  c.e.-open subset $U$ of $M_n(\boldA^\#) \times M_n(\boldA^\#)$ such that, for all $p,q\in P_n(\boldA)$, one has $(p,q)\in U$ if and only if $p\mvn q$.
% Then, there is a c.e.-open subset $U$ of $(M_n(\boldA) \times M_n(\boldA))^\#$ so that for 
% all $p,q \in P_n(\boldA)$, $p \mvn q$ if and only if 
% $(p,q) \in U \cap (P_n(\boldA) \times P_n(\boldA))$.
\end{theorem}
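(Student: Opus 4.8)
The plan is to realize $U$ as a c.e. union of rational open balls, each certified by an \emph{approximate connecting element}. The guiding idea is a quantitative rigidity statement: there is a universal threshold $\epsilon_0 > 0$ (we will be able to take $\epsilon_0 = \frac14$) so that whenever $p, q \in P_n(\boldA)$ and some $v \in M_n(\boldA)$ satisfies $\norm{vv^* - p} < \epsilon_0$ and $\norm{v^*v - q} < \epsilon_0$, one already has $p \mvn q$. Granting this, I would enumerate into $U$ every rational open ball $B((x_0, y_0); r)$ of $M_n(\boldA^\#) \times M_n(\boldA^\#)$ for which there is a rational point $v$ of $M_n(\boldA^\#)$ with
\[
\norm{x_0 - vv^*} + r < \epsilon_0 \qquad\text{and}\qquad \norm{y_0 - v^*v} + r < \epsilon_0 .
\]
This search is effective: if $v$ is a rational point then so are $vv^*$ and $v^*v$, and hence so are $x_0 - vv^*$ and $y_0 - v^*v$; since $M_n(\boldA^\#)$ is c.e. by Proposition \ref{prop:MnA.pres.comp}, the norms of these rational points admit enumerable upper bounds, so each strict inequality is a c.e. condition. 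Thus the collection of balls placed into $U$ is c.e., and $U$ is c.e.-open.

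The heart of the argument is the rigidity lemma. For $v$ as above with $\epsilon_0 < \frac14$, the spectra of the positive elements $vv^*$ and $v^*v$ lie in $[0, \frac14] \cup [\frac34, \frac54]$, since $p, q$ are projections and self-adjoint elements have spectrum close to that of a nearby self-adjoint element; in particular each has a spectral gap around $\frac12$. I would fix a continuous $g$ vanishing on $[0, \frac14]$ and equal to $1$ on $[\frac34, \infty)$ and set $p' = g(vv^*)$ and $q' = g(v^*v)$, which are genuine projections with $\norm{p' - p} < 1$ and $\norm{q' - q} < 1$, whence $p \mvn p'$ and $q \mvn q'$ because close projections are equivalent. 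To see $p' \mvn q'$, I would invoke the intertwining identity $f(vv^*)\,v = v\,f(v^*v)$, valid for continuous $f$ vanishing at $0$, with $f(t) = t^{-1/2}$ on $[\frac34, \infty)$ and $f \equiv 0$ on $[0, \frac14]$: the element $w = v\,f(v^*v)$ then satisfies $w^*w = q'$ and $ww^* = p'$. Transitivity gives $p \mvn q$, and tracking the constants shows $\epsilon_0 = \frac14$ suffices. This is the familiar K-theoretic principle that an approximate partial isometry between projections can be repaired; the only extra care is pinning down an explicit, algebra-independent threshold.

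It remains to verify the two defining properties. For soundness, suppose $(p, q) \in U$ with $p, q \in P_n(\boldA)$, and choose a certifying ball $B((x_0, y_0); r) \subseteq U$ containing $(p,q)$ together with its witness $v$. Since the product presentation carries the max-norm, the triangle inequality gives $\norm{p - vv^*} \le \norm{p - x_0} + \norm{x_0 - vv^*} < r + (\epsilon_0 - r) = \epsilon_0$, and likewise $\norm{q - v^*v} < \epsilon_0$, so the rigidity lemma yields $p \mvn q$. For completeness, suppose $p \mvn q$ with $p, q \in P_n(\boldA)$; by definition there is a genuine $v_0 \in M_n(\boldA)$ with $v_0 v_0^* = p$ and $v_0^* v_0 = q$. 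Approximating $v_0$ by a rational point $v$ and $p, q$ by rational centers $x_0, y_0$ closely enough (possible as the rational points are dense), and taking $r$ small, both $\norm{x_0 - vv^*} + r$ and $\norm{y_0 - v^*v} + r$ drop below $\epsilon_0$ while $(p,q) \in B((x_0,y_0); r)$; hence this ball is enumerated into $U$ and contains $(p,q)$. Note that completeness needs only existence of such a ball, not its effective location, so the non-computability of an arbitrary projection is harmless.

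I expect the main obstacle to be isolating the rigidity lemma with an explicit universal threshold and organizing the search so that the projection hypothesis is used only where it is actually available: the enumeration ranges over arbitrary rational $v$ and constrains only the ball centers, yet soundness must hold for every projection pair that lands in $U$, while completeness rests on density together with the existence of an exact connecting partial isometry. The remaining computability content is routine, since products and differences of rational points are again rational points whose norms are upper-semicomputable in a c.e. presentation.
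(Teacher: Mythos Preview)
Your argument is correct and is genuinely different from the paper's proof. You work entirely inside $M_n(\boldA)$ and certify equivalence by an \emph{approximate partial isometry}: you enumerate balls witnessed by a rational $v$ with $vv^*$ close to the first coordinate and $v^*v$ close to the second, and you repair $v$ to an honest partial isometry via functional calculus and the intertwining identity $f(vv^*)v = vf(v^*v)$. This is the polar-decomposition viewpoint, and it gives a clean universal threshold $\epsilon_0 = \tfrac14$. The paper instead passes to $M_{4n}(\boldA)$ and encodes $p \mvn q$ through the homotopy characterization $p \mvn q \Leftrightarrow (p \oplus \zerom_n)\oplus \zerom_{2n} \sim_h (q \oplus \zerom_n)\oplus \zerom_{2n}$: it enumerates pairs of balls for which there exists a finite chain of rational balls in $M_{4n}(\boldA^\#)$, each meeting $P_{4n}(\boldA)$ (using that this set is c.e.-closed by Corollary~\ref{prop:c.e.clsd}) and consecutive ones close enough to force equivalence. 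Your route is more elementary and stays in the original matrix size; the paper's route ties directly into the c.e.-closed framework it has already set up and makes no use of functional calculus beyond the ``close projections are equivalent'' fact. Both are uniform in an index for $\boldA^\#$, which is what the subsequent corollary needs.
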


\begin{proof}
Let $\mathcal{S}$ denote the set of all pairs $(B,B')$ of rational open balls of 
$M_n(\boldA^\#)$ for which there exists a sequence $B_0, \ldots, B_k$ of rational open balls of 
$M_{4n}(\boldA^\#)$ satisfying the following conditions:
\begin{enumerate}
    \item $\sup\{ \norm{a - b}\ : a \in B_j\ \text{ and }\ b \in B_{j+1}\} < 1$ for all $j=0,\ldots,k-1$.

    \item $B_j \cap P_{4n}(\boldA) \neq \emptyset$ for all $j=0,\ldots,k$.

    \item For all $a \in B$ and $b \in B_0$, 
    $\norm{((a\oplus\zerom_n)\oplus\zerom_{2n}) - b} < 1$.

    \item For all $a \in B_k$ and $b \in B'$, 
    $\norm{a - ((b\oplus\zerom_n)\oplus\zerom_{2n})} < 1$.
\end{enumerate}

By Corollary \ref{prop:c.e.clsd}, $P_{4n}(\boldA)$ is a c.e.-closed subset of $M_{4n}(\boldA^\#)$, whence it follows that
$\mathcal{S}$ is c.e.  Set $U = \bigcup\{ B \times B'\ :\ (B,B') \in \mathcal{S}\}$.  
If $(B,B') \in \mathcal{S}$, then $B \times B'$ is a c.e.-open subset of $M_n(\boldA^\#) \times M_n(\boldA^\#)$.  It follows that $U$ 
is a c.e.-open subset of $M_{4n}(\boldA^\#)$.  

We first show that, if $(p,q) \in U \cap (P_n(\boldA) \times P_n(\boldA))$, 
then $p \mvn q$.  To see this, suppose $(p,q) \in U \cap (P_n(\boldA) \times P_n(\boldA))$.  Take rational open balls $B,B',B_0$, $\ldots$, $B_k$
of $M_{4n}(\boldA^\#)$ satisfying the above conditions with 
$p \in B$ and $q \in B'$.
For each $j=1,\ldots,k$, fix $p_j \in B_j \cap P_{4n}(\boldA)$.  Then, 
$\norm{(p\oplus\zerom_n)\oplus\zerom_{2n} - p_0} < 1$, $\norm{(q\oplus\zerom_n)\oplus\zerom_{2n} - p_k} < 1$, and
$\norm{p_j - p_{j+1}} < 1$ for $j=0,\ldots,k-1$.  We may now use \cite[Propositions 2.2.4 and 2.2.7]{Rordam.Larsen.Laustsen.2000} to conclude that 
$$(p\oplus\zerom_n)\oplus\zerom_{2n} \mvn p_0 \mvn p_1\mvn \cdots \mvn p_k \mvn (q \oplus \zerom_n)\oplus\zerom_{2n}.$$  
By \cite[Proposition 2.3.2.i]{Rordam.Larsen.Laustsen.2000}, $p \mvn (p\oplus\zerom_n)\oplus\zerom_{2n}$ and 
$q \mvn (q\oplus\zerom_n)\oplus\zerom_{2n}$, whence the claim follows.

Conversely, suppose that $p,q \in P_n(\boldA)$ are such that 
$p \mvn q$.  We wish to show that $(p,q) \in U$.  Set $p' = (p \oplus \zerom_n) \oplus \zerom_{2n}$, and set 
$q' = (q \oplus \zerom_n) \oplus \zerom_{2n}$.  Note that $p'\mvn q'$.  By \cite[Proposition 2.2.8]{Rordam.Larsen.Laustsen.2000}, 
$p'$ is homotopic to $q'$ in $P_{4n}(\boldA)$; that is, there is a continuous function 
$f : [0,1] \rightarrow P_{4n}(\boldA)$ with $f(0) = p'$ and $f(1) = q'$.
Since $[0,1]$ is compact, $f$ is uniformly continuous.  Thus, there exist 
$p_0, \ldots, p_k \in \ran(f)$ so that $\norm{p_j - p_{j+1}} < 1$ for $j=1,\ldots,k-1$
and $\max\{\norm{p' - p_0}, \norm{q' - p_k}\} < 1$.  
The existence of the required rational open balls needed to show that $(p,q)\in U$ now follows from continuity.
\end{proof}

We note that the proof of Theorem \ref{thm:mvn.ce} is uniform.  Namely, it provides an algorithm 
that, given an index for a presentation $\boldA^\#$, computes an index for the c.e.-open set $U$ given in the conclusion of the theorem.

\begin{corollary}\label{cor:mvn.pred}
If $\boldA^\#$ is c.e., then there is a c.e. subset $Q \subseteq \N \times \N$ such that, 
for all $e,e' \in \N$, if $e$ and $e'$ are $\boldA^\#$-indices of projections $p$ and $p'$ respectively, then 
$p \mvn p'$ if and only if $Q(e,e')$.  Furthermore, an index of $Q$ can be computed from an index of 
$\boldA^\#$.
\end{corollary}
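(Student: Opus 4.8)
The plan is to derive Corollary \ref{cor:mvn.pred} directly from Theorem \ref{thm:mvn.ce} by transferring the statement from computable points of the C*-algebra to the c.e.-open set $U$ constructed there. The key observation is that if $e$ is an $\boldA^\#$-index for a projection $p$, then (by the discussion following Definition \ref{def:pres.Mn} and the pairing conventions) from $e$ we can effectively produce an $M_n(\boldA^\#)$-index for $p$ viewed as an element of the matrix amplification, and hence from a pair $(e,e')$ we obtain an index for $(p,p')$ as a computable point of $M_n(\boldA^\#) \times M_n(\boldA^\#)$. Thus it suffices to show that membership of a computable point in a c.e.-open set of a c.e. presentation is itself a c.e. condition, uniformly in the index of the point and the index of the open set.

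First I would make precise the mechanism already sketched in the paragraph following Definition \ref{def:c.e.open}: if $U = \bigcup \mathcal{S}$ for a c.e. family $\mathcal{S}$ of rational open balls, and $a$ is a computable point with index $i$, then $a \in U$ if and only if there exist a ball $B(x;r) \in \mathcal{S}$ and a rational point $b$ of the presentation together with a rational $\epsilon > 0$ such that $\|a - b\| < \epsilon$ and $\|x - b\| < r - \epsilon$. The crucial point is that each of these finitely many conditions is verifiable in a c.e. manner: the enumeration of $\mathcal{S}$ is available from the index of $U$; rational points $b$ approximating $a$ to within $2^{-k}$ are produced effectively from the $\boldA^\#$-index $i$ of $a$; and the norm inequality $\|x - b\| < r - \epsilon$ between rational points is c.e. because $\boldA^\#$ is c.e. (its norms are approximable from above in the right-c.e.\ sense). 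Running a dovetailed search over all such tuples $(B(x;r), b, \epsilon, k)$ gives a semidecision procedure that halts precisely when $a \in U$.

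I would then assemble $Q$ by setting $Q(e,e')$ to hold exactly when this dovetailed search, applied to the computable point $(p,p')$ of the product presentation (whose index is computed from $(e,e')$) and to the c.e.-open set $U$ from Theorem \ref{thm:mvn.ce}, halts. By the theorem, for genuine projections $p,p'$ we have $(p,p') \in U$ if and only if $p \mvn p'$, so $Q$ has the desired meaning on indices of projections. The uniformity clause follows by tracing the indices: the index of $U$ is computed from the index of $\boldA^\#$ by the uniformity remark immediately after Theorem \ref{thm:mvn.ce}, and everything else in the construction of $Q$ is uniform in that data, so an index of $Q$ is computable from an index of $\boldA^\#$.

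The main obstacle, and the only place requiring genuine care, is confirming that the membership test $a \in U$ is correctly a c.e. (rather than merely a limit-computable) condition, and that it behaves correctly on the nose for the \emph{projections} $p,p'$ even though $U$ is only claimed to detect Murray–von Neumann equivalence for inputs that are actual projections. Since the corollary only asserts the conclusion under the hypothesis that $e,e'$ are indices of projections, the behaviour of $Q$ on spurious indices is irrelevant, which removes any difficulty there. The substantive verification is simply that the c.e.-open structure of $U$ plus the c.e.\ presentation of the norm make the inequalities above enumerable; this is exactly the content already flagged in the remark after Definition \ref{def:c.e.open}, so the proof amounts to recording that observation uniformly.
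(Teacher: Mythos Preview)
Your proposal is correct and follows essentially the same approach as the paper: both arguments invoke the c.e.-open set $U$ from Theorem~\ref{thm:mvn.ce} and then observe that membership of a computable point in a c.e.-open set of a c.e.\ presentation is a uniformly c.e.\ condition. The paper compresses this into a single explicit definition of $Q$ (searching for $s,k$ and balls $(B,B')$ with $\phi_{e,s}(k)\downarrow$, $\phi_{e',s}(k)\downarrow$, and $B(\rho_{\phi_e(k)};2^{-k})\subseteq B$, $B(\rho_{\phi_{e'}(k)};2^{-k})\subseteq B'$), while you spell out the same dovetailed search more verbosely via the remark after Definition~\ref{def:c.e.open}; the content is identical.
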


\begin{proof}
Let $\rho_n$ denote the $n$-th rational point of $M_n(\boldA^\#)$ and let $U$ be a c.e. open set of $M_n(\boldA^\#) \times M_n(\boldA^\#)$ that satisfies the conclusion of 
Theorem \ref{thm:mvn.ce}.  The desired set $Q$ is the set of those pairs $(e,e')\in \N^2$ for which there exist $s,k \in \N$ and $(B,B') \in U$ so that 
$\phi_{e,s}(k)\downarrow$, $\phi_{e',s}(k)\downarrow$, $B(\rho_{\phi_e(k)}; 2^{-k}) \subseteq B$, and 
$B(\rho_{\phi_{e'}(k)}; 2^{-k}) \subseteq B'$.  
\end{proof}

\subsection{The Murray-von Neumann semigroup}\label{subsec:MvN}

From now on, let $\mathcal{D}(\boldA)$ denote the set of Murray-von Neumann equivalence classes of elements of $\mathcal{P}_{<\omega}(\boldA)$.  The operation $\oplus$ induces an abelian semigroup operation on $\mathcal{D}(\boldA)$, which we continue to denote by $\oplus$.

Our goal in this subsection is to show how the presentation $\boldA^\#$ induces a presentation of $\mathcal{D}(\boldA)$.  
First, we must make precise what we mean by a presentation of $\mathcal{D}(\boldA)$.

Fix a countably infinite set $X = \{x_0, x_1, \ldots\}$ of indeterminates
and let $D_\omega$ and $F_\omega$ denote the free semigroup and free group generated by $X$ respectively. 
We implicitly view both $D_\omega$ and $F_\omega$ as equipped with a computable bijection with $\N$, 
whence it makes sense to speak of, say, c.e. and computable subsets of $D_\omega$ and $F_\omega$.
The following definitions are taken from algorithmic group theory.

\begin{definition}\label{def:semig.pres}
 If $S$ is a semigroup, then a \emph{presentation of $S$} is a pair $(S, \nu)$, where $\nu$ is a semigroup epimorphism of $D_\omega$ onto $S$. 
If $G$ is a group, then a \emph{presentation of $G$} is a pair $(G, \nu)$, where $\nu$ is a group epimorphism of $F_\omega$ onto $S$.
\end{definition}

As with \cstar algebras, if $S$ is a semigroup or a group, then we use expressions such as $S^\#$ and $S^\dagger$ to denote
presentations of $S$.

We introduce some terminology associated with presentations of groups and semigroups.
Suppose $S$ is a semigroup or group and $S^\# = (S, \nu)$ is a presentation of $S$.  
If $\nu(w) = a$, then we call $w$ an \emph{$S^\#$-label} of $a$.  The \emph{kernel} of $S^\#$, denoted $\ker(\nu)$, is the set 
of all pairs $(w,w')$ so that $w$ and $w'$ are $S^\#$-labels of the same element of $S$.

\begin{definition}\label{def:pres.etc}
Suppose $S^\#$ is a presentation of a group or semigroup.
We say that $S^\#$ is \emph{c.e.} (resp. \emph{computable}) if its kernel is c.e. (resp. computable).
\end{definition}

We remark that in algorithmic group theory, c.e. presentations are called \emph{recursive} and 
computable presentations are called \emph{decidable}.

\begin{definition}\label{def:semig.map.comp}
Suppose $S_0^\#$ and $S_1^\#$ are (semi)group presentations.  A function $f : S_0 \rightarrow S_1$ is a \emph{computable map from $S_0^\#$ to $S_1^\#$} if there is a computable map 
$F$ so that, for every $a\in S_0$ and every $S_0^\#$-label $w$ of $a$, $F(w)$ is an $S_1^\#$-label of $f(a)$. 
 %If $f$ is a semigroup isomorphism for which both $f$ and its inverse are computable, then we say that $f$ is a \emph{computable isomorphism}.  %If there is a computable isomorphism between $S_0^\#$ and $S_1^\#$, we say that the presentations are \emph{computably isomorphic}.
\end{definition}

To be clear, in the previous definition, $S_0$ could be a semigroup while $S_1$ is a group (or vice-versa)

\begin{definition}\label{def:supports}
Suppose that $\boldA^\#$ is a presentation of $\boldA$ and $\mathcal{D}(\boldA)^\#$ is a presentation of $\mathcal{D}(\boldA)$. We say that $\boldA^\#$ \emph{computably supports} $\mathcal{D}(\boldA)^\#$ if there is an algorithm that, given $w \in D_\omega$, computes $n,e \in \N$ so that $e$ is an $M_n(\boldA^\#)$-index of
a projection $p \in M_n(\boldA)$ for which $w$ is a $\mathcal{D}(\boldA)^\#$-label of $[p]_{\mvn}$.
\end{definition}

\begin{proposition}\label{prop:support.unique}
If $\boldA^\#$ is c.e., then, up to computable isomorphism, there is at most one 
c.e. presentation of $\mathcal{D}(\boldA)$ that is computably supported by $\boldA^\#$.
\end{proposition}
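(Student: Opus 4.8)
The plan is to show that the identity map $\mathrm{id} : \mathcal{D}(\boldA) \to \mathcal{D}(\boldA)$ witnesses the computable isomorphism. Suppose $\mathcal{D}(\boldA)^\# = (\mathcal{D}(\boldA), \nu)$ and $\mathcal{D}(\boldA)^\dagger = (\mathcal{D}(\boldA), \mu)$ are two c.e. presentations, both computably supported by $\boldA^\#$. Since $\mathrm{id}$ is its own inverse, by symmetry it suffices to produce a computable map $F : D_\omega \to D_\omega$ with $\mu(F(w)) = \nu(w)$ for every $w \in D_\omega$; the same argument with the roles of $\nu$ and $\mu$ exchanged then yields a computable map in the reverse direction, so that $\mathrm{id}$ is a computable isomorphism from $\mathcal{D}(\boldA)^\#$ to $\mathcal{D}(\boldA)^\dagger$. (Note that quantifying over all $w \in D_\omega$ is the same as quantifying over all $a \in \mathcal{D}(\boldA)$ and all $\#$-labels $w$ of $a$, since every word is a label of exactly $\nu(w)$.)

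The engine of the construction is a c.e. relation $R \subseteq D_\omega \times D_\omega$ defined by $R(w,v)$ if and only if $\nu(w) = \mu(v)$. To see that $R$ is c.e., fix $w$ and $v$. Because $\boldA^\#$ computably supports $\mathcal{D}(\boldA)^\#$, we may compute $n, e$ so that $e$ is an $M_n(\boldA^\#)$-index of a projection $p$ with $[p]_{\mvn} = \nu(w)$; likewise, using the computable support of $\mathcal{D}(\boldA)^\dagger$, we compute $m, d$ so that $d$ is an $M_m(\boldA^\#)$-index of a projection $q$ with $[q]_{\mvn} = \mu(v)$. Then $\nu(w) = \mu(v)$ holds exactly when $p \mvn q$. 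After padding the smaller of $p, q$ with a zero block so that both lie in a common $P_N(\boldA)$ (an effective operation on the indices, since the corner embedding sends rational points to rational points), Corollary \ref{cor:mvn.pred} supplies a c.e. predicate that confirms $p \mvn q$. Dovetailing the relevant computations over $(w,v)$ shows that $R$ is c.e.

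Now define $F(w)$ by searching the enumeration of $R$ and outputting the first $v$ for which $R(w,v)$ is confirmed. This $F$ is computable, and it is total: since $\mu$ is onto, there is some $v^\ast$ with $\mu(v^\ast) = \nu(w)$, and for this $v^\ast$ the relation $R(w, v^\ast)$ holds, so the search halts. By construction $\mu(F(w)) = \nu(w)$, as required. The one point demanding care is precisely that Murray--von Neumann equivalence of projections is only c.e., not decidable, so one cannot in general test candidate words for equality of their images and then take, say, the least equal word by a decision procedure. The construction sidesteps this by searching only for a \emph{single} confirmed match, whose existence is guaranteed by surjectivity of $\mu$; thus a positive-only semidecision procedure suffices, and the potential undecidability of non-equivalence never arises.
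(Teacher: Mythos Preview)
Your proof is correct and follows essentially the same approach as the paper's: use the computable support of both presentations to pass from words to indices of projections, then invoke Corollary~\ref{cor:mvn.pred} to semidecide Murray--von Neumann equivalence and search for a matching label on the other side. Your write-up is arguably a bit more explicit than the paper's in two respects---you spell out the padding step needed to place $p$ and $q$ in a common $P_N(\boldA)$ before invoking the corollary, and you frame the search via an auxiliary c.e.\ relation $R$---but neither of these is a substantive departure.
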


\begin{proof}
Suppose $\mathcal{D}(\boldA)^\#$ and $\mathcal{D}(\boldA)^\dagger$ are 
c.e. presentations of $\mathcal{D}(\boldA)$ that are computably supported by $\boldA^\#$.  
We define a function $F : D_\omega \rightarrow D_\omega$ as follows.  
Given $w \in D_\omega$, we compute $e_w, n_w$ so that $e_w$ is an $M_{n_w}(\boldA)^\#$-index of a projection 
$p_w \in P_{n_w}(\boldA)$ for which $w$ is a $\mathcal{D}(\boldA)^\#$-label of $[p_w]_{\mvn}$.
Similarly, for each $v \in D_\omega$, we
compute $n_v,e_v \in \N$ so that $e_v$ is an $M_{n_v}(\boldA)^\#$-index of a 
projection $p_v^\dagger$ for which $v$ is a $\mathcal{D}(\boldA)^\dagger$-label of $[p_v^\dagger]_{\mvn}$.
By Corollary \ref{cor:mvn.pred}, we can effectively search for a $v \in D_\omega$ so that 
$p_w \mvn p_v^\dagger$ and we then set $F(w)$ to be this $v$.  Thus, $F$ is computable.  Moreover, by construction, $F$ witnesses that the identity map on $\mathcal{D}(\boldA)$ is a computable isomorphism of $\mathcal{D}(\boldA)^\#$ to $\mathcal{D}(\boldA)^\dagger$.
\end{proof}

\begin{proposition}\label{prop:seq.proj}
If $\boldA^\#$ is c.e. and $n$ is a positive integer, 
then there is an $M_n(\boldA^\#)$-computable sequence $(p_k)_{k \in \N}$
of projections such that every projection of $M_n(\boldA)$ is Murray-von Neumann equivalent to some $p_k$.
\end{proposition}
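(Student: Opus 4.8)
The plan is to build the sequence by enumerating small rational open balls that meet the projections, extracting one computable projection from each via Lemma \ref{intersection}, and then invoking the standard fact that projections within distance $1$ of one another are Murray-von Neumann equivalent in order to obtain the covering property.

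First I would recall that $M_n(\boldA^\#)$ is a c.e. presentation by Proposition \ref{prop:MnA.pres.comp}, and that the set $P_n(\boldA)$ of projections is a c.e.-closed subset of $M_n(\boldA^\#)$ by Corollary \ref{prop:c.e.clsd}. By the definition of a c.e.-closed set, the collection of rational open balls of $M_n(\boldA^\#)$ that intersect $P_n(\boldA)$ is c.e.; restricting to those of radius less than $\frac{1}{2}$ yields an effective enumeration $B_0, B_1, \ldots$ of such balls. For each $k$, I would then apply Lemma \ref{intersection} with the c.e.-open set $U = B_k$ (a single rational open ball is trivially c.e.-open, with an index computable from $B_k$) and the fixed c.e.-closed set $C = P_n(\boldA)$. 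Since $B_k \cap P_n(\boldA) \neq \emptyset$ by construction, the lemma produces a computable projection $p_k \in B_k \cap P_n(\boldA)$ whose $M_n(\boldA^\#)$-index is computed uniformly from $k$. This is what makes $(p_k)_{k \in \N}$ an $M_n(\boldA^\#)$-computable sequence of projections rather than merely a family of computable points.

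To verify the covering property, let $p \in P_n(\boldA)$ be arbitrary. Using the density of the rational points of $M_n(\boldA^\#)$, I would choose a rational center within distance $\frac{1}{8}$ of $p$ and radius $\frac{1}{4}$, producing a rational open ball of radius less than $\frac{1}{2}$ that contains $p$ and hence meets $P_n(\boldA)$; this ball occurs in the enumeration as some $B_k$. As both $p$ and $p_k$ lie in $B_k$, whose diameter is less than $1$, we have $\norm{p - p_k} < 1$, and therefore $p \mvn p_k$ by the propositions of \cite{Rordam.Larsen.Laustsen.2000} cited in the proof of Theorem \ref{thm:mvn.ce}.

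The only step requiring genuine care is the uniformity clause of Lemma \ref{intersection}: it is precisely the fact that an index for the extracted computable point can be obtained from indices for $U$ and $C$ that upgrades the construction from a list of individual projections to a single computable sequence. Everything else — the c.e.-closedness of $P_n(\boldA)$, the enumerability of the balls, and the distance estimate forcing Murray-von Neumann equivalence — is routine.
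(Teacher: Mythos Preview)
Your proposal is correct and follows essentially the same approach as the paper's proof: enumerate the rational balls of small radius meeting $P_n(\boldA)$, apply Lemma \ref{intersection} uniformly to extract a computable projection from each, and use $\norm{p-p_k}<1 \Rightarrow p\mvn p_k$ for the covering property. The only cosmetic difference is that the paper enumerates balls of diameter less than $2^{-1}$ whereas you use radius less than $\tfrac12$, and the paper is slightly terser in the final verification step.
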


\begin{proof}
Let $\mathcal{S}_n$ denote the set of all rational open balls $B$ of $M_n(\boldA^\#)$ for which 
$B \cap P_n(\boldA) \neq \emptyset$.  By Corollary \ref{prop:c.e.clsd}, $\mathcal{S}_n$ is c.e. 
Fix an effective one-to-one enumeration 
$(B(a_j;r_j))_{j \in \N}$ of $\{B \in \mathcal{S}_n\ :\ \diam(B) < 2^{-1}\}$ and set $B_j = B(a_j; r_j)$. 

By Lemma \ref{intersection}, for each $k\in \N$, we may compute an $M_n(\boldA^\#)$-index for a projection $p_k\in B_k$.  
Now given $p\in \mathcal{P}_n(\boldA)$, choose $k\in \N$ such that $p\in B_k$; it follows that $\|p-p_k\|<2r_k<1$, whence $p\mvn p_k$. 
\end{proof}

The proof of Proposition \ref{prop:seq.proj} is uniform.  Namely, it constructs an algorithm such that, given $n$ and an index for $A^\#$, 
computes an $M_n(\boldA^\#)$-index for a sequence as in the conclusion of the lemma.  

\begin{theorem}\label{thm:pres.DA}
If $\boldA^\#$ is c.e., then there is a c.e. presentation $\mathcal{D}(\boldA^\#)$ of $\mathcal{D}(\boldA)$ that is supported by
$\boldA^\#$.
\end{theorem}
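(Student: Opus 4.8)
The plan is to manufacture a single computable master list of projections that meets every Murray-von Neumann class, use it to define the semigroup epimorphism $\nu$, and then read off both the computable support and the c.e.-ness of the kernel from the uniform results of the previous subsection. First I would apply Proposition \ref{prop:seq.proj} uniformly in $n$: for each positive integer $n$ it yields an $M_n(\boldA^\#)$-computable sequence of projections hitting every Murray-von Neumann class in $P_n(\boldA)$, and (as noted after its proof) the construction is uniform in $n$. Fixing the computable pairing $\langle\cdot,\cdot\rangle : \N^2 \to \N$, I would amalgamate these into a single computable enumeration $(q_m)_{m \in \N}$, where $q_{\langle n,k\rangle} \in P_n(\boldA)$ and where an $M_n(\boldA^\#)$-index $e_m$ of $q_m$ together with the matrix size $n_m = n$ is computable from $m$. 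By construction every projection in every $M_n(\boldA)$ is Murray-von Neumann equivalent to some $q_m$, so $m \mapsto [q_m]_{\mvn}$ is onto $\mathcal{D}(\boldA)$.

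I would then let $\nu : D_\omega \to \mathcal{D}(\boldA)$ be the unique semigroup homomorphism with $\nu(x_m) = [q_m]_{\mvn}$. Since $[p \oplus q]_{\mvn} = [p]_{\mvn} \oplus [q]_{\mvn}$, a word $w = x_{m_1}\cdots x_{m_j}$ satisfies $\nu(w) = [q_{m_1}\oplus \cdots \oplus q_{m_j}]_{\mvn}$. Surjectivity of $\nu$ is immediate from the covering property above, since already every class arises from a single generator; hence $(\mathcal{D}(\boldA),\nu)$, which I denote $\mathcal{D}(\boldA^\#)$, is a presentation in the sense of Definition \ref{def:semig.pres}. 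For the computable support required by Definition \ref{def:supports}, given $w = x_{m_1}\cdots x_{m_j}$ I would set $N = n_{m_1}+\cdots+n_{m_j}$ and compute an $M_N(\boldA^\#)$-index of the block-diagonal projection $q_{m_1}\oplus\cdots\oplus q_{m_j}$: approximating each $q_{m_i}$ by rational points of $M_{n_{m_i}}(\boldA^\#)$ and placing these blocks down the diagonal produces rational points of $M_N(\boldA^\#)$ converging to the direct sum, so its index is computable from $w$. This furnishes exactly the pair $(N,e)$ demanded by Definition \ref{def:supports}, with $w$ a $\mathcal{D}(\boldA^\#)$-label of $[q_{m_1}\oplus\cdots\oplus q_{m_j}]_{\mvn}$.

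It remains to verify c.e.-ness of the kernel, in the sense of Definition \ref{def:pres.etc}. Given words $w, w'$, I would compute as above indices for the two block-diagonal projections $P \in P_N(\boldA)$ and $P' \in P_{N'}(\boldA)$ representing $\nu(w)$ and $\nu(w')$. Since $\nu(w) = \nu(w')$ holds exactly when $P \mvn P'$, I would pass to a common matrix size by padding both projections with zeros, forming $P \oplus \zerom_{N'}$ and $\zerom_N \oplus P'$ inside $M_{N+N'}(\boldA)$ (which does not change their Murray-von Neumann classes), and then invoke the uniform c.e.-ness of the relation $\mvn$ supplied by Theorem \ref{thm:mvn.ce} and Corollary \ref{cor:mvn.pred} at that fixed size. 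Running this uniformly over all pairs $(w,w')$ enumerates the kernel, so $\mathcal{D}(\boldA^\#)$ is c.e.

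The main obstacle is precisely this size-matching step. The semigroup $\mathcal{D}(\boldA)$ mixes projections drawn from all the amplifications $M_n(\boldA)$, whereas Theorem \ref{thm:mvn.ce} compares projections of a single fixed size; so the genuine work is to carry out the formation of block-diagonal indices and the zero-padding effectively and uniformly in $w$ and $w'$, ensuring that the common size and the two indices are computed from the input words before the c.e. test for $\mvn$ is applied. Everything else—surjectivity of $\nu$ and the computable support—follows directly from the covering property of the master list and the routine computability of the direct-sum operation on rational points.
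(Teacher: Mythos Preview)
Your proposal is correct and follows essentially the same approach as the paper: build a master list of projections via the uniform version of Proposition~\ref{prop:seq.proj}, define the epimorphism on generators by sending $x_m$ to the class of the $m$-th listed projection, read off computable support from block-diagonal formation, and verify c.e.-ness of the kernel by zero-padding to a common matrix size and invoking Corollary~\ref{cor:mvn.pred}. Your size-matching step and padding are exactly what the paper does (the paper writes $\phi_0(w)\oplus\zerom_{n_{w'}}$ and $\phi_0(w')\oplus\zerom_{n_w}$, which is equivalent to your $P\oplus\zerom_{N'}$ and $\zerom_N\oplus P'$).
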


\begin{proof}
Suppose $\boldA^\#$ is c.e.  By the uniformity of Proposition \ref{prop:seq.proj}, 
there is an array $(p_{n,k})_{n,k \in \N}$ that satisfies the following conditions.
\begin{enumerate}
    \item $(p_{n,k})_{k \in \N}$ is a computable sequence of projections in $M_n(\boldA^\#)$, uniformly in $n$.

    \item For every projection $p \in P_n(\boldA)$, there exists $k \in \N$ so that $p \mvn p_{n,k}$.
\end{enumerate}
We define the semigroup epimorphism $\phi:D_\omega\to \mathcal{D}(\boldA)$ yielding the presentation $\mathcal{D}(\boldA^\#)$ as follows.  First, set $q_{\langle m,k \rangle} = p_{m,k}$ for all $m,k \in \N$.  
Given $w = \prod_{j \in F} x_j \in D_\omega$, where $F \subseteq \N$ is finite, set $\phi_0(w) =  \bigoplus_{j \in F} q_j\in \mathcal{P}_{<\omega}(\boldA)$ and let $\phi(w) = [\phi_0(w)]_{\mvn}\in \mathcal{D}(\boldA)$.  Note that $\phi$ is indeed a semigroup epimorphism.  
Finally, we set $\mathcal{D}(\boldA^\#) = (\mathcal{D}(\boldA), \phi)$.  

It is straightforward to see that $\mathcal{D}(\boldA^\#)$ is computably supported by $\boldA^\#$.
% We note that from $w$, it is possible to compute $n_w,e_w \in \N$ so that $\phi_0(w) \in P_{n_w}(\boldA)$ and 
% so that $e_w$ is an $M_{n_w}(\boldA)^\#$-index of $\phi_0(w)$.  That is, $\mathcal{D}(\boldA)^\#$ is computably supported 
% by $\boldA^\#$.
We now show $\mathcal{D}(\boldA^\#)$ is c.e.  Given $w,w' \in D_\omega$, set:
\begin{eqnarray*}
n & = & n_w + n_{w'}\\
q & = & \phi_0(w) \oplus \zerom_{n_{w'}}\\
q' & = & \phi_0(w') \oplus \zerom_{n_w}.
\end{eqnarray*}
Thus $(w,w') \in \ker(\mathcal{D}(\boldA^\#) )$ if and only if $q \mvn q'$.
In addition, $M_n(\boldA^\#)$-indices of $q$ and $q'$ can be computed from $w$, $w'$.  
It follows from the uniformity of Corollary \ref{cor:mvn.pred} that $\ker(\mathcal{D}(\boldA^\#))$ is c.e. 
\end{proof}

Once again, we remark that the proof Theorem \ref{thm:pres.DA} is uniform.
%in that from an index of a presentation $\boldA^\#$, 
%it is possible to compute an index of the c.e. presentation of $\mathcal{D}(\boldA)^\#$ that is supported by $\boldA^\#$
%as well as an index of the supporting algorithm.

%In the sequel, if $\boldA^\#$ is computable, then we let $\mathcal{D}(\boldA^\#)$ denote the c.e. presentation of 
%$\mathcal{D}(\boldA)$ that is computably supported by $\boldA^\#$.

If $\psi : \boldA_0 \rightarrow \boldA_1$ is a $*$-homomorphism, we let $\psi_n:M_n(\boldA_0)\to M_n(\boldA_1)$ denote the matrix amplification of $\psi$.  We note that if $\psi$ is also a computable map from $\boldA_0^\#$ to $\boldA_1^\#$, then each 
$\psi_n$ is a computable map from $M_n(\boldA_0)^\#$ to $M_n(\boldA_1)^\#$.  We also let $\mathcal{D}(\psi):\mathcal{D}(\boldA_0)\to \mathcal{D}(\boldA_1)$ be the semigroup homomorphism defined by
$\mathcal{D}(\psi)([p]_{\mvn}) = [\psi_n(p)]_{\mvn}$ whenever $p \in P_n(\boldA)$.

\begin{theorem}\label{thm:D(psi).comp}
Suppose $\boldA_0^\#$ and $\boldA_1^\#$ are c.e. and $\psi$ is a computable 
$\star$-homomorphism from $\boldA_0^\#$ to $\boldA_1^\#$.  Then $\mathcal{D}(\psi)$ is a computable
map from $\mathcal{D}(\boldA_0^\#)$ to $\mathcal{D}(\boldA_1^\#)$.  
\end{theorem}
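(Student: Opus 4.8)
The plan is to build, following Definition \ref{def:semig.map.comp}, a computable map $F : D_\omega \to D_\omega$ that sends each $\mathcal{D}(\boldA_0^\#)$-label of an element $a \in \mathcal{D}(\boldA_0)$ to a $\mathcal{D}(\boldA_1^\#)$-label of $\mathcal{D}(\psi)(a)$. The first step is to exploit the fact, established in Theorem \ref{thm:pres.DA}, that $\mathcal{D}(\boldA_0^\#)$ is computably supported by $\boldA_0^\#$: given an input word $w \in D_\omega$, we compute $n, e \in \N$ such that $e$ is an $M_n(\boldA_0^\#)$-index of a projection $p \in P_n(\boldA_0)$ for which $w$ is a $\mathcal{D}(\boldA_0^\#)$-label of $[p]_{\mvn}$. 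Thus $a = [p]_{\mvn}$ is the element we must track, now given to us concretely by an index for a projection.

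Next, since $\psi$ is a computable $\star$-homomorphism, the amplification $\psi_n$ is a computable map from $M_n(\boldA_0^\#)$ to $M_n(\boldA_1^\#)$, uniformly in $n$. Because $\psi_n$ is itself a $\star$-homomorphism it carries the projection $p$ to a projection $\psi_n(p) \in P_n(\boldA_1)$, and from the index $e$ we may compute an $M_n(\boldA_1^\#)$-index $e'$ of $\psi_n(p)$. By the definition of $\mathcal{D}(\psi)$ we have $[\psi_n(p)]_{\mvn} = \mathcal{D}(\psi)(a)$, so it now suffices to produce, computably from $e'$ and $n$, some $\mathcal{D}(\boldA_1^\#)$-label of $[\psi_n(p)]_{\mvn}$.

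To produce such a label I would return to the array $(p_{n,k})_{n,k}$ of projections in $\boldA_1$ used to build $\mathcal{D}(\boldA_1^\#)$ in Theorem \ref{thm:pres.DA}: this is a computable sequence (uniformly in $n$) with the property that every projection in $P_n(\boldA_1)$ is Murray--von Neumann equivalent to some $p_{n,k}$, and the single-indeterminate word $x_{\langle n,k\rangle}$ is a $\mathcal{D}(\boldA_1^\#)$-label of $[p_{n,k}]_{\mvn}$. Using the c.e. predicate $Q$ of Corollary \ref{cor:mvn.pred} for $\boldA_1^\#$ --- which recognizes Murray--von Neumann equivalence of projections from their indices --- together with the computable indices for the $p_{n,k}$, I would search over $k$ for a value with $\psi_n(p) \mvn p_{n,k}$ and set $F(w) = x_{\langle n,k\rangle}$. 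The search terminates because such a $k$ exists, and the resulting word labels $[p_{n,k}]_{\mvn} = [\psi_n(p)]_{\mvn} = \mathcal{D}(\psi)(a)$, exactly as required.

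The main point to get right is this final search: it is the one place where mere computability of the data is insufficient, and we must invoke the c.e.-ness of Murray--von Neumann equivalence from Corollary \ref{cor:mvn.pred} to guarantee that a correct label is eventually located, with termination resting on property (2) of the array. The remaining ingredients --- the computable support of $\mathcal{D}(\boldA_0^\#)$, the uniform computability of the amplifications $\psi_n$, and the fact that $\star$-homomorphisms preserve projections --- are then assembled routinely. Since the construction of $F(w)$ depends only on $w$, and any two labels of the same $a$ yield Murray--von Neumann equivalent projections $p$ and hence equivalent images $\psi_n(p)$, the output $F(w)$ is a legitimate label of $\mathcal{D}(\psi)(a)$ regardless of which label $w$ we started from, so $F$ witnesses that $\mathcal{D}(\psi)$ is a computable map.
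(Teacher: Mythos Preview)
Your proposal is correct and follows essentially the same approach as the paper: use computable support of $\mathcal{D}(\boldA_0^\#)$ to pass from $w$ to an index for a projection $p$, push $p$ through the computable amplification $\psi_n$, and then search for a $\mathcal{D}(\boldA_1^\#)$-label of $[\psi_n(p)]_{\mvn}$ using the c.e.-ness of Murray--von Neumann equivalence. The only difference is cosmetic: the paper phrases the final step tersely by invoking computable support of $\mathcal{D}(\boldA_1^\#)$ together with Theorem~\ref{thm:mvn.ce}, whereas you unpack this explicitly via the array $(p_{n,k})$ and Corollary~\ref{cor:mvn.pred}.
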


\begin{proof}
Given $w \in D_\omega$, do the following.  First, compute $n$ and an $M_n(\boldA_0)^\#$-index $e$ of a projection 
$p_0 \in M_n(\boldA_0)^\#$ so that $w$ is a $\mathcal{D}(\boldA_0)^\#$-label of $p_0$.  
Since $\psi_n$ is computable, we can then compute an $M_n(\boldA_1)^\#$-index $e_1$ of 
$\psi_n(p_0)$.  Since $\boldA_1^\#$ supports $\mathcal{D}(\boldA_1^\#)$, by 
Theorem \ref{thm:mvn.ce}, we can then search for a $\mathcal{D}(\boldA_1^\#)$-label of $[\psi_n(p_0)]_{\mvn} = \mathcal{D}(\psi)([p_0]_{\mvn})$. 
\end{proof}

Once again, the statement of Theorem \ref{thm:D(psi).comp} holds uniformly.

As previously remarked, we view presentations of unital \cstar-algebras as forming a category; we also consider the full subcategories of c.e. and computable presentations of \cstar-algebras.  Similarly, we have the category of (semi)group presentations, where the morphisms are the computable homomorphisms; we analogously can consider the full subcategories of c.e. and computable presentations of (semi)groups.  Our work in this section can be summarized as follows.  

\begin{corollary}\label{cor:D.comp.funct}
$\mathcal{D}$ is a computable functor from the category of c.e. presentations of unital \cstar-algebras to the category of c.e. semigroup presentations.
\end{corollary}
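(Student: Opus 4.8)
The plan is to assemble the functor from the two main constructions of this section together with their uniformity. Theorem~\ref{thm:pres.DA} assigns to each c.e. presentation $\boldA^\#$ a c.e. semigroup presentation $\mathcal{D}(\boldA^\#)$, and Theorem~\ref{thm:D(psi).comp} assigns to each computable $*$-homomorphism $\psi : \boldA_0^\# \to \boldA_1^\#$ a computable semigroup map $\mathcal{D}(\psi) : \mathcal{D}(\boldA_0^\#) \to \mathcal{D}(\boldA_1^\#)$. Thus the data of the functor is already in place; what remains is to verify the functor axioms and to package the computability (uniformity) of the two assignments into the object- and morphism-index algorithms demanded by the notion of a computable functor.

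First I would fix, once and for all, the algorithm furnished by the uniform version of Theorem~\ref{thm:pres.DA}, so that $\boldA^\# \mapsto \mathcal{D}(\boldA^\#)$ becomes a genuine map on indices: from a c.e. index for $\boldA^\#$ it outputs a c.e. index for $\mathcal{D}(\boldA^\#)$. (By Proposition~\ref{prop:support.unique} any two admissible outputs are computably isomorphic, so the particular choice is immaterial up to computable isomorphism.) Likewise, the uniform version of Theorem~\ref{thm:D(psi).comp} yields an algorithm that, from indices for $\boldA_0^\#$, $\boldA_1^\#$, and an $(\boldA_0^\#, \boldA_1^\#)$-index for $\psi$, computes a witness that $\mathcal{D}(\psi)$ is a computable map. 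These are exactly the two index-to-index algorithms required for $\mathcal{D}$ to be a \emph{computable} functor.

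Next I would check that $\mathcal{D}$ satisfies the functor axioms. Since morphisms in the category of semigroup presentations are just the computable functions, it suffices to verify the identities at the level of the underlying maps, where $\mathcal{D}(\psi)$ is given by the classical formula $\mathcal{D}(\psi)([p]_{\mvn}) = [\psi_n(p)]_{\mvn}$. That this is a well-defined semigroup homomorphism follows from the facts that $\psi_n$ carries Murray-von Neumann equivalent projections to equivalent projections and that $\psi_{m+n}(p \oplus q) = \psi_m(p) \oplus \psi_n(q)$. For the identity law, $(\mathrm{id}_{\boldA})_n$ is the identity on $M_n(\boldA)$, so $\mathcal{D}(\mathrm{id}_{\boldA})$ fixes every class $[p]_{\mvn}$. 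For composition, $(\psi \circ \phi)_n = \psi_n \circ \phi_n$ gives $\mathcal{D}(\psi \circ \phi) = \mathcal{D}(\psi) \circ \mathcal{D}(\phi)$ as maps. These are the standard functoriality properties of the Murray-von Neumann semigroup.

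I do not anticipate a serious obstacle, since the substantive work was carried out in Theorems~\ref{thm:pres.DA} and~\ref{thm:D(psi).comp}; the corollary is essentially a bookkeeping step. The one point requiring care is to confirm that the computable map $\mathcal{D}(\psi)$ produced by Theorem~\ref{thm:D(psi).comp} agrees, as a function, with the classically defined semigroup homomorphism $[p]_{\mvn} \mapsto [\psi_n(p)]_{\mvn}$, so that the categorical identity and composition laws may simply be read off from the corresponding classical identities rather than re-established effectively.
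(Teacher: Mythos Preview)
Your proposal is correct and matches the paper's approach: the paper gives no explicit proof for this corollary, simply stating it as a summary of the work already done in Theorems~\ref{thm:pres.DA} and~\ref{thm:D(psi).comp} together with their uniformity remarks. Your additional verification of the functor axioms (identity and composition) is a harmless elaboration of what the paper leaves implicit.
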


In this corollary, when we say that $\mathcal{D}$ is a computable functor, we mean that there is an algorithm that, given an index 
of an object or morphism, computes an index for its $\mathcal{D}$-image.  We will use the word computable in the same way for the other functors defined later in this paper.

\subsection{The $K_0$ group}

For a unital \cstar-algebra $\boldA$, $K_0(\boldA)$ is defined by taking the Grothendieck group of the Murray-von Neumann semigroup $\mathcal{D}(\boldA)$.  Consequently, we begin this section by recalling the definition of the Grothendieck functor $\mathcal{G}$ from the category of abelian semigroups to the category of 
abelian groups.    

Fix an abelian semigroup $S$ and consider the equivalence relation $\sim$ on $S\times S$ given by $(a,b) \sim (c,d)$ if and only if there exists $z \in S$ so that 
$a + d + z = c + b + z$.  Let $\mathcal{G}(S)=(S\times S)/\sim$ denote the quotient space.  Then the semigroup operation on $S\times S$ (defined coordinatewise) induces an abelian group operation on $\mathcal{G}(S)$, where the additive inverse of the equivalence class of $(a,b)$ is the equivalence class of $(b,a)$.  We call $\mathcal{G}(S)$ the \emph{Grothendieck group of the semigroup $S$}.  We let $\pi:S\times S \to \mathcal{G}(S)$ denote the natural quotient map.  There is also a map $\gamma_S : S \rightarrow \mathcal{G}(S)$ given by setting $\gamma_S(a)=\pi(a,b)$, where $b\in S$ is any fixed element.  (The definition of $\gamma_S$ does not depend on the choice of $b\in S$.)  The reader should be warned that $\gamma_S$ is not injective in general; in fact, $\gamma_S$ is injective precisely when $S$ is a cancellative semigroup.  Every element of $\mathcal{G}(S)$ can be written as a difference $\gamma_S(a)-\gamma_S(b)$ for some $a,b\in S$.

The Grothendieck group $\mathcal{G}(S)$ satisfies the following universal property:  if $\phi : S \rightarrow H$ is a semigroup morphism from an abelian semigroup to an abelian group, then there is a unique group homomorphism 
$\psi_\phi:\mathcal{G}(S)\to H$ so that $\psi_\phi \circ \gamma_S = \phi$.  A consequence of this fact is that $\mathcal{G}$ is actually a functor, that is, given a semigroup morphism $\phi:S_0\to S_1$, there is a unique group morphism $\mathcal{G}(\phi):\mathcal{G}(S_0)\to \mathcal{G}(S_1)$ such that $\mathcal{G}(\phi)\circ \gamma_{S_0}=\gamma_{S_1}\circ \phi$.

We now seek to effective this construction.

\begin{definition}\label{def:univ}
Let $S^\#$ be a presentation of an abelian semigroup and let $\mathcal{G}(S)^\#$ be a presentation of $\mathcal{G}(S)$.
We say that $\mathcal{G}(S)^\#$ is \emph{$S^\#$-universal} if it satisfies the following conditions:
\begin{enumerate}
    \item $\gamma_S$ is a computable map from $S^\#$ to $\mathcal{G}(S)^\#$.

    \item Whenever $H^\#$ is a c.e. presentation of an abelian group and $\phi$ is a computable semigroup morphism from 
    $S^\#$ to $H^\#$, then $\psi_\phi$ is a computable map from $\mathcal{G}(S)^\#$ to $H^\#$ and, moreover, an 
    index of $\psi_\phi$ can be computed from an index of $\phi$.
\end{enumerate}
\end{definition}

%In the above definition, our notion of computable maps between (semi)group presentations is being adapted to the ``mixed'' scenario in the obvious way.  For example, condition (1) means that there is a computable function $f:D_\omega\to F_\omega$ such that, for all $w\in D_\omega$, if $w$ is an $S^\#$-label for $s\in S$, then $f(w)$ is a $\mathcal{G}(S)^\#$-label for $\gamma_S(s)$.

It is straightforward to show that if there is an $S^\#$-universal presentation of $\mathcal{G}(S)$, then this presentation is unique 
up to computable isomorphism.

\begin{definition}\label{def:c.e.grp.pres}
If $S^\#$ is a (semi)group presentation, then $X \subseteq S$ is said to be a \emph{c.e. subset of $S^\#$} if 
the set of all $S^\#$-labels of elements of $S$ is c.e.
\end{definition}

\begin{proposition}\label{prop:univ.exist}
If $S^\#$ is a c.e. presentation of an abelian semigroup, then there is a (necessarily unique) c.e. presentation
$\mathcal{G}(S^\#)$ of $\mathcal{G}(S)$ that is $S^\#$-universal.
\end{proposition}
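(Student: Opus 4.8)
The plan is to build $\mathcal{G}(S^\#)$ directly from the epimorphism $\nu \colon D_\omega \to S$ underlying $S^\#$, transporting generators across $\gamma_S$. Since every element of $\mathcal{G}(S)$ has the form $\gamma_S(a) - \gamma_S(b)$ and $\gamma_S$ is a semigroup morphism, the elements $\gamma_S(\nu(x_i))$ generate $\mathcal{G}(S)$ as a group. I would therefore let $\mu \colon F_\omega \to \mathcal{G}(S)$ be the group epimorphism determined by $\mu(x_i) = \gamma_S(\nu(x_i))$ and set $\mathcal{G}(S^\#) = (\mathcal{G}(S), \mu)$. Condition (1) of Definition \ref{def:univ} is then immediate: the tautological inclusion $\iota \colon D_\omega \to F_\omega$ sending a semigroup word to the identical group word is computable, and for any $S^\#$-label $w$ of $a \in S$ one has $\mu(\iota(w)) = \gamma_S(\nu(w)) = \gamma_S(a)$, so $\iota$ witnesses that $\gamma_S$ is a computable map from $S^\#$ to $\mathcal{G}(S^\#)$.

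The main work is to show that $\mathcal{G}(S^\#)$ is c.e., i.e. that $\ker(\mathcal{G}(S^\#))$ is c.e. Because $\mathcal{G}(S)$ is an abelian group, $\mu(u)$ depends only on the exponent-sum vector of $u$: writing $\sigma_i(u) \in \Z$ for the total exponent of $x_i$ in $u$, one has $\mu(u) = \sum_i \sigma_i(u)\,\gamma_S(\nu(x_i))$, so $\mu(u) = \mu(u')$ iff $\sum_i m_i\,\gamma_S(\nu(x_i)) = 0$, where $m_i = \sigma_i(u) - \sigma_i(u')$ is a computable, finitely supported integer vector. Separating positive and negative coordinates and prepending a fixed letter $x_0$ to both sides to avoid empty products, I would form the nonempty semigroup words $w_+ = x_0 \prod_{m_i > 0} x_i^{m_i}$ and $w_- = x_0 \prod_{m_i < 0} x_i^{-m_i}$ in $D_\omega$, so that $\mu(u) = \mu(u')$ iff $\gamma_S(\nu(w_+)) = \gamma_S(\nu(w_-))$. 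The key elementary fact about the Grothendieck construction is that $\gamma_S(a) = \gamma_S(b)$ iff there exists $z \in S$ with $a + z = b + z$; since $\nu$ is surjective and a semigroup morphism, in label form this reads: there is a word $t \in D_\omega$ with $(w_+ t, w_- t) \in \ker(S^\#)$. As $S^\#$ is c.e., $\ker(S^\#)$ is c.e., and dovetailing the search over $t \in D_\omega$ against an enumeration of $\ker(S^\#)$ shows $\ker(\mathcal{G}(S^\#))$ is c.e.

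For condition (2), suppose $H^\#$ is a c.e. presentation of an abelian group $H$ and $\phi$ is a computable semigroup morphism from $S^\#$ to $H^\#$, witnessed by a computable $F_\phi \colon D_\omega \to F_\omega$ carrying $S^\#$-labels of $a$ to $H^\#$-labels of $\phi(a)$. Using $\psi_\phi \circ \gamma_S = \phi$ and that $\psi_\phi$ is a group homomorphism, for any $\mathcal{G}(S^\#)$-label $u$ of $g$ one has $\psi_\phi(g) = \sum_i \sigma_i(u)\,\phi(\nu(x_i))$. Since $F_\phi(x_i)$ is an $H^\#$-label of $\phi(\nu(x_i))$ and $H$ is an abelian group, the group word $G(u) = \prod_i F_\phi(x_i)^{\sigma_i(u)} \in F_\omega$ (a finite product, with negative powers interpreted via word inversion) is an $H^\#$-label of $\psi_\phi(g)$. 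The map $G$ is computable uniformly from an index for $F_\phi$, so $\psi_\phi$ is a computable map from $\mathcal{G}(S^\#)$ to $H^\#$ with index computable from that of $\phi$.

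Uniqueness up to computable isomorphism is exactly the remark following Definition \ref{def:univ}, so nothing more is needed there. I expect the only genuinely delicate step to be the c.e.-ness of the kernel: everything hinges on reducing the abelian-group identity $\mu(u) = \mu(u')$ to a single instance of the cancellative condition $a + z = b + z$ that can be searched for against $\ker(S^\#)$, together with the minor bookkeeping (the extra $x_0$) forced by the fact that $D_\omega$ has no empty word. The remaining verifications are routine computations with labels.
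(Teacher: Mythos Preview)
Your argument is correct and complete, but it takes a genuinely different route from the paper's proof. The paper first builds an intermediate c.e. presentation $(S\times S)^\#$ of the product semigroup (with computable projections and injections), then obtains $\mathcal{G}(S^\#)$ by composing the labeling map of $(S\times S)^\#$ with the canonical quotient $\pi\colon S\times S\to\mathcal{G}(S)$ and extending to $F_\omega$; c.e.-ness follows because the Grothendieck relation $\sim$ is a c.e. subset of $(S\times S)^\#$, and condition~(2) is verified by pulling a $\mathcal{G}(S^\#)$-label back through $\pi$ to a pair $(a',b')$ and computing an $H^\#$-label of $\phi(a')-\phi(b')$. Your approach bypasses the $S\times S$ stage entirely: sending each generator $x_i$ directly to $\gamma_S(\nu(x_i))$ and exploiting abelianness reduces everything to exponent-sum bookkeeping and a single search against $\ker(S^\#)$. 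This is shorter and more elementary; the paper's construction is more modular (it isolates $(S\times S)^\#$ as a reusable object with good maps), and in its presentation each free generator already labels an arbitrary class $[(a,b)]_\sim$ rather than only an element in the image of $\gamma_S$, but by the uniqueness remark the two presentations are computably isomorphic in any case.
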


\begin{proof}
We first claim that there is a c.e. presentation $(S \times S)^\#$ of the semigroup $S\times S$ that satisfies the following conditions.

\begin{enumerate}
    \item The projection functions are computable maps from $(S \times S)^\#$ to $S^\#$.

    \item For each $a \in S$, the injection maps $b \mapsto (a,b)$ and $b \mapsto (b,a)$ 
    are computable maps from $S^\#$ to $(S \times S)^\#$.
\end{enumerate}

To see this, write $S^\# = (S, \nu)$.  Fix a computable bijection $\tau = (\tau_0, \tau_1)$ of $\N$ onto 
$(\N^{<\omega} \times \N^{< \omega}) \setminus \{(\emptyset, \emptyset)\}$.
For each $n \in \N$ and $k \in \{0,1\}$, let $\sigma_k(x_n) = \prod_{j < |\tau_k(n)|} x_{\tau_k(j)}$.  
Set $\sigma = (\sigma_0, \sigma_1)$.
Then, for each $w \in D_\omega$, let $\phi(w) = \prod_{j < |w|} \sigma(w_j)$.   
It follows that $(D_\omega \times D_\omega, \phi)$ is a computable presentation of 
$D_\omega \times D_\omega$.  
Let $\phi_0$, $\phi_1$ be the component maps of $\phi$.  
Set $\nu_1(w) = (\nu\phi_0(w), \nu\phi_1(w))$, and set $(S \times S)^\# = (S \times S, \nu_1)$.  
It follows that $(S \times S)^\#$ has the required properties.  Note also that the Grothendieck equivalence relation $\sim$ is a c.e. subset of $(S \times S)^\#$. 

We next claim that there is a c.e. presentation 
$\mathcal{G}(S^\#)$ so that the canonical epimorphism $\pi:S\times S\to \mathcal{G}(S)$ is a computable map from 
$(S \times S)^\#$ to $\mathcal{G}(S^\#)$.  Let $\nu_1$ be the labeling map of $(S \times S)^\#$ and set $\nu_2 = \pi\nu_1$.  Extend $\nu_2$ to $F_\omega$ by defining 
$\nu_2(x_n^{-1})=\nu_2(x_n)^{-1}$ and $\nu_2(\unit) = \nu_2(x_0x_0^{-1})$, and set $\mathcal{G}(S^\#) = (\mathcal{G}, \nu_2)$. 
Then $\mathcal{G}(S^\#)$ has the required properties.  
Note also that, since $\sim$ is a c.e. subset of $(S\times S)^\#$, $\mathcal{G}(S)^\#$ is a c.e. presentation of $\mathcal{G}(S)$.

We now argue for the $S^\#$-universality of $\mathcal{G}(S^\#)$. 
Let $a_0$ be the element of $S$ whose $S^\#$-label is the indeterminate $x_0$. We have, 
$\gamma_S(a) = [(a,a_0)]_{\sim}$ for all $s \in S$.  Thus, by the computability of the 
injections and the canonical epimorphism, $\gamma_S$ is a computable map from $S^\#$ to $\mathcal{G}(S^\#)$.  
Suppose now that $H^\#$ is a c.e. presentation of an abelian group, and suppose $\phi$ is a computable
semigroup morphism from $S^\#$ to $H^\#$.
Suppose $g_0 \in F_\omega$ is a $\mathcal{G}(S^\#)$-label of $[(a,b)]_\sim$.  
By using the canonical epimorphism, we can compute a $(S \times S)^\#$-label $w_1$ 
of a pair $(a',b')$ so that 
$(a',b') \sim (a,b)$.  By using the projection maps, we can compute 
$S^\#$-labels of $a'$ and $b'$.
We then compute an $H^\#$-label of $\phi(a') - \phi(b') = \psi_\phi([(a,b)]_\sim)$.
\end{proof}

The statement of Proposition \ref{prop:univ.exist} holds uniformly in the sense that, from an index of $S^\#$, one can compute
indices for $\mathcal{G}(S^\#)$ and $\gamma_S$.

% Given a c.e. presentation $S^\#$ of an abelian semigroup, we let 
 %   $\mathcal{G}(S^\#)$ denote the unique presentation of $\mathcal{G}(S)$ that is 
  %  $S^\#$-universal constructed in the previous proposition.

% If $S_0^\#$ and $S_1^\#$ are c.e. presentations of abelian semigroups, and if 
%     $\phi$ is a computable homomorphism of $S_0^\#$ to $S_1^\#$, let 
%     $\mathcal{G}(\phi) = \mathcal{G}(\phi)$.

\begin{corollary}\label{cor:groth.comp}
$\mathcal{G}$ is a computable functor from the category of c.e. presentations of abelian semigroups to the category of 
c.e. presentations of abelian groups.
\end{corollary}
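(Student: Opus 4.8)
The plan is to reduce everything to Proposition \ref{prop:univ.exist} and the universal property of Definition \ref{def:univ}, treating the object part and the morphism part of the functor separately and checking that preservation of identities and composition comes for free.

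For the object part there is nothing substantially new to do: given an index of a c.e. presentation $S^\#$ of an abelian semigroup, the uniformity remark following Proposition \ref{prop:univ.exist} already yields an index of the c.e. group presentation $\mathcal{G}(S^\#)$ of $\mathcal{G}(S)$, effectively in the index of $S^\#$. So I would simply cite that.

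For the morphism part, suppose $\phi$ is a computable semigroup morphism from $S_0^\#$ to $S_1^\#$. I would first form the composite $\gamma_{S_1}\circ\phi\colon S_0\to\mathcal{G}(S_1)$, a semigroup morphism from the abelian semigroup $S_0$ into the abelian group $\mathcal{G}(S_1)$. Since $\phi$ is computable from $S_0^\#$ to $S_1^\#$ and $\gamma_{S_1}$ is computable from $S_1^\#$ to $\mathcal{G}(S_1^\#)$ (Proposition \ref{prop:univ.exist}, with index computable from an index of $S_1^\#$), and since composing computable (semi)group maps yields a computable map whose index is computable from the indices of the factors, I would conclude that $\gamma_{S_1}\circ\phi$ is a computable semigroup morphism from $S_0^\#$ into the c.e. group presentation $\mathcal{G}(S_1^\#)$, with index computable from an index of $\phi$.

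I would then apply clause (2) of the $S_0^\#$-universality of $\mathcal{G}(S_0^\#)$: since $\mathcal{G}(S_1^\#)$ is a c.e. presentation of an abelian group and $\gamma_{S_1}\circ\phi$ is a computable morphism into it, the induced homomorphism $\psi_{\gamma_{S_1}\circ\phi}\colon\mathcal{G}(S_0)\to\mathcal{G}(S_1)$ is computable from $\mathcal{G}(S_0^\#)$ to $\mathcal{G}(S_1^\#)$, with an index computable from an index of $\gamma_{S_1}\circ\phi$ and hence of $\phi$. To finish I would identify this homomorphism with $\mathcal{G}(\phi)$: by definition $\mathcal{G}(\phi)$ is the unique group morphism with $\mathcal{G}(\phi)\circ\gamma_{S_0}=\gamma_{S_1}\circ\phi$, while $\psi_{\gamma_{S_1}\circ\phi}$ is the unique group morphism with $\psi_{\gamma_{S_1}\circ\phi}\circ\gamma_{S_0}=\gamma_{S_1}\circ\phi$, so uniqueness forces $\mathcal{G}(\phi)=\psi_{\gamma_{S_1}\circ\phi}$. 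Because the maps produced are exactly the classical Grothendieck images of the morphisms, preservation of identities and of composition is then inherited from the ordinary non-effective functoriality of $\mathcal{G}$, so nothing further is required. I do not expect a genuine obstacle here; the only points needing care are the effective bookkeeping that composition of computable maps is uniform in the indices and the short uniqueness argument identifying $\psi_{\gamma_{S_1}\circ\phi}$ with $\mathcal{G}(\phi)$, both of which are routine.
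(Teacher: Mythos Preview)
Your proposal is correct. It differs from the paper's proof mainly in presentation: the paper argues directly that from a $\mathcal{G}(S_0^\#)$-label of $g$ one can compute $S_0^\#$-labels of $a,b$ with $g=\gamma_{S_0}(a)-\gamma_{S_0}(b)$ and then compute a $\mathcal{G}(S_1^\#)$-label of $\mathcal{G}(\phi)(g)=\gamma_{S_1}(\phi(a))-\gamma_{S_1}(\phi(b))$, whereas you invoke clause~(2) of the $S_0^\#$-universality of $\mathcal{G}(S_0^\#)$ as a black box, applied to the computable morphism $\gamma_{S_1}\circ\phi$, and then identify $\psi_{\gamma_{S_1}\circ\phi}$ with $\mathcal{G}(\phi)$ by uniqueness. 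Your route is slightly more abstract and economical, since the concrete computation the paper performs is exactly the content of the universality clause already established in Proposition~\ref{prop:univ.exist}; the paper's route is more explicit and self-contained. Either way the substance is the same.
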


\begin{proof}
It only remains to show that $\mathcal{G}$ uniformly transforms computable semigroup homomorphisms into 
comptuable group homomorphisms.  Suppose 
$\phi$ is a computable semigroup homomorphism from $S_0^\#$ to $S_1^\#$.  
From a $\mathcal{G}(S_0^\#)$-label of $g \in \mathcal{G}(S_0)$, one can compute $S_0^\#$-labels of $a,b \in S_0$ so that 
$g = \gamma_{S_0}(a) - \gamma_{S_0}(b)$.  Consequently, we can then compute a $\mathcal{G}(S_1^\#)$-label of 
$\mathcal{G}(\phi)(g) = \gamma_{S_1}(\phi(a)) - \gamma_{S_1}(\phi(b))$. 
\end{proof}

Given a c.e. presentation $\boldA^\#$, we set $K_0(\boldA^\#) = (\mathcal{G}\circ \mathcal{D})(\boldA^\#)$.

\begin{corollary}\label{cor:K0.comp.func}
$K_0$ is a computable functor from the category of c.e. presentations of unital \cstar-algebras to the 
category of c.e. presentations of abelian groups.
\end{corollary}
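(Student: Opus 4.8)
The plan is to observe that, by the very definition $K_0(\boldA^\#) = (\mathcal{G} \circ \mathcal{D})(\boldA^\#)$, the functor $K_0$ is the composite of the two functors already shown to be computable, so the whole statement reduces to the general principle that a composite of computable functors is computable. I would therefore not need any new C*-algebraic or group-theoretic content; everything follows by chaining Corollaries \ref{cor:D.comp.funct} and \ref{cor:groth.comp}.

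First I would check that the composite is well-defined at the level of categories. Corollary \ref{cor:D.comp.funct} gives that $\mathcal{D}$ is a computable functor into the category of c.e. semigroup presentations, and, as recorded at the start of Subsection \ref{subsec:MvN}, the operation $\oplus$ makes $\mathcal{D}(\boldA)$ an \emph{abelian} semigroup. Hence the image of $\mathcal{D}$ in fact lands in the full subcategory of c.e. presentations of abelian semigroups, which is precisely the domain of $\mathcal{G}$ in Corollary \ref{cor:groth.comp}. Thus $\mathcal{G} \circ \mathcal{D}$ is a functor from c.e. presentations of unital \cstar-algebras to c.e. presentations of abelian groups, and functoriality (preservation of identities and of composition) is inherited from $\mathcal{D}$ and $\mathcal{G}$.

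For computability, I would unwind the convention that a ``computable functor'' is one for which there is an algorithm that, given an index for an object or a morphism, returns an index for its image. The argument is then to compose the two witnessing algorithms: given an index for a c.e. presentation $\boldA^\#$ (or for a computable $*$-homomorphism), run the algorithm witnessing the computability of $\mathcal{D}$ to obtain an index for the $\mathcal{D}$-image, and then feed this index to the algorithm witnessing the computability of $\mathcal{G}$ to obtain an index for the $\mathcal{G}$-image of that object (respectively morphism); by the definition of $K_0$ this is an index for the $K_0$-image. I do not expect any genuine obstacle here: the only point that requires attention is the category-theoretic bookkeeping of the previous paragraph, namely confirming that every object produced by $\mathcal{D}$ is an abelian semigroup presentation, so that $\mathcal{G}$ may legitimately be applied and the composition of the two algorithms is meaningful.
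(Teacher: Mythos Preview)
Your proposal is correct and mirrors the paper's own treatment: the corollary is stated without proof immediately after the definition $K_0(\boldA^\#) = (\mathcal{G}\circ\mathcal{D})(\boldA^\#)$, with the understanding that it follows by composing the computable functors of Corollaries \ref{cor:D.comp.funct} and \ref{cor:groth.comp}. Your explicit check that the image of $\mathcal{D}$ lands in the abelian-semigroup subcategory is a reasonable bit of bookkeeping that the paper leaves implicit.
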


The previous corollary raises the following natural question.

\begin{question}
If $\boldA^\#$ is computable, is $K_0(\boldA^\#)$ also computable?  More generally, if $\boldA$ is computably presentable, is $K_0(\boldA)$ also computably presentable?
\end{question}

We will see that in the case of UHF algebras, the previous question has a positive answer.  In order to prove that result, we will require the following:

\begin{proposition}\label{Grothendieckcomputable}
Suppose that $S^\#$ is a computable presentation of a \emph{cancellative} abelian semigroup $S$.  Then $\mathcal G(S^\#)$ is also computable.
\end{proposition}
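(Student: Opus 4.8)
The plan is to decide equality in $\mathcal{G}(S)$ by reducing it to the (decidable) word problem of $S^\#$, using cancellativity to eliminate the existential quantifier over $z$ in the Grothendieck equivalence relation. Recall from Definition \ref{def:pres.etc} that the presentation $\mathcal{G}(S^\#) = (\mathcal{G}(S), \nu_2)$ constructed in Proposition \ref{prop:univ.exist} is computable precisely when its kernel $\ker(\nu_2)$ is computable; so, given two words $g, g' \in F_\omega$, I must produce a procedure that decides whether $\nu_2(g) = \nu_2(g')$.

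First I would recall that every element of $\mathcal{G}(S)$ has the form $\pi(a,b) = \gamma_S(a) - \gamma_S(b)$ for some $a,b \in S$. Moreover, by the argument already given in the proof of Proposition \ref{prop:univ.exist} — where from a $\mathcal{G}(S^\#)$-label one computes an $(S \times S)^\#$-label and then, via the projection maps, $S^\#$-labels of the two coordinates — there is an algorithm that, given $g \in F_\omega$, outputs $S^\#$-labels $W_a, W_b \in D_\omega$ of elements $a,b \in S$ with $\nu_2(g) = \pi(a,b)$. Applying this to both $g$ and $g'$ yields $S^\#$-labels $W_a, W_b, W_c, W_d$ of elements $a,b,c,d \in S$ such that $\nu_2(g) = \pi(a,b)$ and $\nu_2(g') = \pi(c,d)$.

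The key step is then the chain of equivalences
\[
\nu_2(g) = \nu_2(g') \iff \pi(a,b) = \pi(c,d) \iff (a,b) \sim (c,d) \iff a + d = c + b ,
\]
where the final equivalence is exactly where cancellativity enters: by definition $(a,b) \sim (c,d)$ holds iff $a + d + z = c + b + z$ for some $z \in S$, and cancelling $z$ gives $a + d = c + b$ (equivalently, since $S$ is cancellative $\gamma_S$ is injective, so $\pi(a,b) = \pi(c,d)$ iff $\gamma_S(a+d) = \gamma_S(c+b)$ iff $a+d = c+b$). Finally, the equation $a + d = c + b$ is decidable from the computed data: because $\nu$ is a semigroup homomorphism, the concatenations $W_a W_d$ and $W_c W_b$ in $D_\omega$ are $S^\#$-labels of $a+d$ and $c+b$ respectively, so $a + d = c + b$ holds iff $(W_a W_d,\, W_c W_b) \in \ker(\nu)$, and this last membership is decidable since $S^\#$ is computable. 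This yields a decision procedure for $\ker(\nu_2)$.

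The only genuine obstacle is pinpointing where cancellativity is indispensable. In the general (non-cancellative) case the relation $(a,b) \sim (c,d)$ carries an unbounded existential quantifier over $z \in S$, so the kernel of $\mathcal{G}(S^\#)$ is in general only c.e.; cancellativity collapses this relation to the single decidable equation $a + d = c + b$, which is what upgrades the word problem of $\mathcal{G}(S^\#)$ from c.e. to computable. Everything else in the argument is a routine invocation of the uniform computability already established in Proposition \ref{prop:univ.exist}.
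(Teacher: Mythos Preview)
Your proposal is correct and follows essentially the same approach as the paper: compute $S^\#$-labels of representatives $a,b,c,d$ with $\nu_2(g)=\pi(a,b)$ and $\nu_2(g')=\pi(c,d)$, use cancellativity to collapse the Grothendieck relation to the single equation $a+d=c+b$, and decide that equation in the computable presentation $S^\#$. The only cosmetic difference is that the paper obtains the representatives by an effective search using the c.e.\ kernel and the computability of $\gamma_S$, whereas you invoke directly the projection-map machinery from the proof of Proposition~\ref{prop:univ.exist}; both routes produce the same data.
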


\begin{proof}
Suppose $x_1,x_2\in F_\omega$ are $\mathcal G(S^\#)$-labels for $a_1,a_2\in \mathcal G(S)$.  Since $\gamma_S$ is a computable map from $S^\#$ to $\mathcal G(S^\#)$, we can effectively search for $y_{i,j}\in D_\omega$ which are $S^\#$-labels for $b_{i,j}\in \mathcal G(S)$ satisfying $a_i=\gamma_S(b_{i,1})-\gamma_S(b_{i,2})$.  We then have that $a_1=a_2$ if and only if $\gamma_S(b_{1,1})+\gamma_S(b_{2,2})=\gamma_S(b_{1,2})+\gamma_S(b_{2,1})$, which, since $S$ is cancellative, is equivalent to $b_{1,1}+b_{2,2}=b_{1,2}+b_{2,1}$; since $S^\#$ is computable, this can indeed be decided.
\end{proof}

Recall that an \emph{ordered abelian group} is a pair $(G,G^+)$, where $G$ is a group and $G^+$ is a subset of $G$, called the \emph{positive cone of $G$}, that satisfy the following three properties.

\begin{enumerate}
    \item $G^++G^+\subseteq G^+$.
    \item $G^+\cap (-G^+)=\{0\}$.
    \item $G=G^+-G^+$.
\end{enumerate}

If $\boldA$ is stably finite, then $K_0(\boldA)$ is an ordered abelian group with positive cone $K_0(\boldA)^+=\{\gamma_{\mathcal{D}(\boldA)}([p]_{\mvn}) \ : \ p\in \mathcal{P}_{<\omega}(\boldA)\}$.

\begin{proposition}\label{prop:cone.ce}
If $\boldA$ is stably finite and $\boldA^\#$ is c.e., 
then $K_0(\boldA)^+$ is a c.e. subset of $K_0(\boldA^\#)$.
\end{proposition}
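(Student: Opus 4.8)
The plan is to recognize the positive cone as the image of the canonical Grothendieck map $\gamma_{\mathcal{D}(\boldA)}$ and then to exploit the computability of this map together with the c.e.-ness of the kernel of $K_0(\boldA^\#)$. By the description of the positive cone recalled just before the statement, stable finiteness gives
$$K_0(\boldA)^+ = \gamma_{\mathcal{D}(\boldA)}(\mathcal{D}(\boldA)),$$
i.e.\ the positive cone is exactly the image of the Grothendieck map applied to the whole Murray--von Neumann semigroup. Writing $K_0(\boldA^\#) = (K_0(\boldA), \mu)$ for its labeling map $\mu : F_\omega \to K_0(\boldA)$, Definition \ref{def:c.e.grp.pres} reduces the proposition to showing that the set $\{w \in F_\omega : \mu(w) \in K_0(\boldA)^+\}$ is c.e.

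First I would invoke the $\mathcal{D}(\boldA^\#)$-universality of $K_0(\boldA^\#) = \mathcal{G}(\mathcal{D}(\boldA^\#))$ established in Proposition \ref{prop:univ.exist}, which guarantees in particular that $\gamma_{\mathcal{D}(\boldA)}$ is a computable map from $\mathcal{D}(\boldA^\#)$ to $K_0(\boldA^\#)$. Fix a computable function $F : D_\omega \to F_\omega$ witnessing this in the sense of Definition \ref{def:semig.map.comp}: for every $s \in \mathcal{D}(\boldA)$ and every $\mathcal{D}(\boldA^\#)$-label $v$ of $s$, the word $F(v)$ is a $K_0(\boldA^\#)$-label of $\gamma_{\mathcal{D}(\boldA)}(s)$. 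Since the labeling epimorphism of $\mathcal{D}(\boldA^\#)$ is surjective, as $v$ ranges over $D_\omega$ the values $\mu(F(v))$ range over precisely $\gamma_{\mathcal{D}(\boldA)}(\mathcal{D}(\boldA)) = K_0(\boldA)^+$.

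The remaining step is to assemble these observations into an enumeration. Given $w \in F_\omega$, note that $\mu(w) \in K_0(\boldA)^+$ if and only if there exists $v \in D_\omega$ with $(w, F(v)) \in \ker(K_0(\boldA^\#))$. By Corollary \ref{cor:K0.comp.func}, $K_0(\boldA^\#)$ is a c.e.\ presentation, so its kernel is c.e.; since $F$ is computable, I would dovetail over all $v \in D_\omega$, running the semidecision procedure for the kernel on each pair $(w, F(v))$, and enumerate $w$ as soon as one such pair is found in the kernel. If $\mu(w) \in K_0(\boldA)^+$ then a witnessing $v$ exists and is eventually located, while otherwise the search never halts; hence $\{w \in F_\omega : \mu(w) \in K_0(\boldA)^+\}$ is c.e., as required.

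The only point requiring genuine care---and the sole place where the hypothesis enters---is the opening identification of $K_0(\boldA)^+$ with the image of $\gamma_{\mathcal{D}(\boldA)}$, which is exactly what stable finiteness supplies. Everything afterward is a uniform dovetailing over labels, structurally identical to the semidecidability of membership in the range of a computable map; that $\gamma_{\mathcal{D}(\boldA)}$ fails to be injective in general causes no difficulty, since we detect equality of values through the c.e.\ kernel rather than on labels. I therefore do not anticipate a serious obstacle beyond correctly citing the computability of $\gamma_{\mathcal{D}(\boldA)}$ and the c.e.-ness of $\ker(K_0(\boldA^\#))$.
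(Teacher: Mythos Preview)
Your proof is correct and follows essentially the same route as the paper's: identify $K_0(\boldA)^+$ with $\operatorname{ran}(\gamma_{\mathcal{D}(\boldA)})$, use the computable universality of $K_0(\boldA^\#)=\mathcal{G}(\mathcal{D}(\boldA^\#))$ to obtain a computable $F:D_\omega\to F_\omega$ tracking $\gamma$, and then observe that $\mu(w)\in K_0(\boldA)^+$ iff $(w,F(v))\in\ker(K_0(\boldA^\#))$ for some $v\in D_\omega$, which is a c.e.\ condition. The only minor remark is that the identification $K_0(\boldA)^+=\operatorname{ran}(\gamma_{\mathcal{D}(\boldA)})$ is really the \emph{definition} of the positive cone; stable finiteness is what guarantees this set satisfies the axioms of a positive cone in an ordered abelian group, so the hypothesis is needed for the statement to be meaningful rather than for the argument itself.
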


\begin{proof}
For simplicity, set $S = \mathcal{D}(\boldA)$, so $K_0(\boldA)^+ = \ran(\gamma_S)$.
Write $\mathcal{D}(\boldA^\#) = (\mathcal{D}(\boldA), \nu_0)$ and 
$K_0(\boldA^\#) = (K_0(\boldA), \nu_1)$.  Then by the computable universality of $K_0(\boldA^\#)$,
there is a computable
$F : D_\omega \rightarrow F_\omega$ so that $\gamma_S \nu_0 = \nu_1 F$.
A direct computation shows that $$\nu_1^{-1}[\ran(\gamma_S)] =\{w\in F_\omega \ : \ (w,F(w'))\in \ker(\nu_1) \text{ for some }w'\in F_\omega\}.$$
Since $\ker(\nu_1)$ is c.e. and $F$ is computable, we see that $\nu_1^{-1}[\ran(\gamma_S)]$ is 
c.e., whence $K_0(\boldA)^+$ is a c.e. subset of $K_0(\boldA^\#)$.
% $\nu_1^{-1}[\ran(\gamma_S)] = \ker(\nu_1) + \ran(F)$.  Since both summands are c.e. subsets of $F_\omega$, so is their sum $\nu_1^{-1}[\ran(\gamma_S)]$ is 
% c.e., whence $K_0(\boldA)^+$ is a c.e. subset of $K_0(\boldA^\#)$.
\end{proof}

\begin{corollary}\label{linearcomputable}
Suppose that $\boldA$ is stably finite and $K_0(\boldA)$ is linearly ordered, that is, $K_0(\boldA)=K_0(\boldA)^+\cup (-K_0(\boldA)^+)$.  Then for any c.e. presentation $\boldA^\#$ of $\boldA$, $K_0(\boldA)^+$ is a computable subset of $K_0(\boldA^\#)$.
\end{corollary}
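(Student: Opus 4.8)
The plan is to deduce this from Proposition~\ref{prop:cone.ce} by showing that the set of $K_0(\boldA^\#)$-labels of $K_0(\boldA)^+$ is not merely c.e.\ but computable; since that proposition already gives c.e.-ness, it suffices to show that the complementary set of labels---those of elements of $K_0(\boldA)\setminus K_0(\boldA)^+$---is also c.e. The linear order is exactly what pins down this complement: because $K_0(\boldA)=K_0(\boldA)^+\cup(-K_0(\boldA)^+)$ while $K_0(\boldA)^+\cap(-K_0(\boldA)^+)=\{0\}$, an element $g$ fails to lie in $K_0(\boldA)^+$ precisely when $-g\in K_0(\boldA)^+$ and $g\neq 0$; that is, the complement is $(-K_0(\boldA)^+)\setminus\{0\}$. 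A $K_0(\boldA^\#)$-label $w$ of $g$ yields the label $w^{-1}$ of $-g$ computably, so by Proposition~\ref{prop:cone.ce} the labels of $-K_0(\boldA)^+$ are c.e. Thus everything reduces to certifying the extra condition $g\neq 0$ on the c.e.\ set $-K_0(\boldA)^+$; equivalently, I must show that membership in the \emph{strictly} negative cone is c.e.

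The next step is to convert ``$g\neq 0$'' into a statement about projections where stable finiteness can be applied. Given $g\in -K_0(\boldA)^+$, the c.e.\ witness for $-g\in K_0(\boldA)^+$ supplied by Proposition~\ref{prop:cone.ce} is a $\mathcal{D}(\boldA)^\#$-label of a class with $\gamma_{\mathcal{D}(\boldA)}([q]_{\mvn})=-g$, and since $\boldA^\#$ computably supports $\mathcal{D}(\boldA^\#)$ (Theorem~\ref{thm:pres.DA}) this furnishes an $M_m(\boldA^\#)$-index of an actual projection $q$. Stable finiteness now provides rigidity at $0$: if $\gamma_{\mathcal{D}(\boldA)}([q]_{\mvn})=0$ then, unwinding the Grothendieck relation, $q\oplus r\mvn r$ for some projection $r$; as $r\le q\oplus r$, finiteness of the ambient matrix algebra forces $q\oplus r=r$ and hence $q\mvn 0$. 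Therefore $g=0$ if and only if $q\mvn 0$, and by Corollary~\ref{cor:mvn.pred} the relation $q\mvn 0$ is c.e.\ in the indices. This recovers the (already known) c.e.-ness of ``$g=0$''; what the complement actually requires is the c.e.-ness of the negation ``$q\not\mvn 0$''.

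This last point is exactly the main obstacle: certifying \emph{strict} inequality, i.e.\ non-vanishing, from a c.e.\ (right-c.e.)\ presentation. Such presentations let one confirm that a norm is \emph{small}---so that $q\mvn 0$, equivalently $q=0$, is certifiable---but they do not confirm that a norm is bounded \emph{away} from $0$, so at this level non-vanishing is only visibly co-c.e. To break the symmetry I would use the order unit $u=[1_\boldA]\in K_0(\boldA)^+$ coming from unitality together with linearity, searching in parallel over $n\in\N$ for a certificate that $-ng\ge u$, i.e.\ that $-ng-u\in K_0(\boldA)^+$, which is c.e.\ by Proposition~\ref{prop:cone.ce}. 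The content to be verified is that, for a stably finite algebra with linearly ordered $K_0$, $g<0$ holds exactly when some such comparison succeeds, so that the search certifies strict negativity. Establishing this comparability statement is the delicate heart of the argument: the subtle case is a non-Archimedean linear order, where infinitesimal elements $g<0$ satisfy $-ng<u$ for all $n$, so that a bare order-unit comparison does not witness strictness and must be supplemented (for instance by comparing within the Archimedean class of $g$, or by exploiting stable finiteness more directly to produce a c.e.\ certificate of $q\not\mvn 0$). Once strict negativity is shown to be c.e., the complement of $K_0(\boldA)^+$ is c.e., and together with Proposition~\ref{prop:cone.ce} this yields computability.
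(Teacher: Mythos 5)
You have assembled the correct easy ingredients---by linearity and stable finiteness the complement of $K_0(\boldA)^+$ is $(-K_0(\boldA)^+)\setminus\{0\}$; negation is a computable map, so by Proposition \ref{prop:cone.ce} the labels of $-K_0(\boldA)^+$ form a c.e.\ set; and stable finiteness does force $\gamma_{\mathcal{D}(\boldA)}([q]_{\mvn})=0$ only when $q=0$, exactly as you argue---but the proof does not close. You have reduced everything to showing that strict negativity (equivalently, non-vanishing on the c.e.\ set $-K_0(\boldA)^+$) is c.e., and the only mechanism you offer, searching for some $n$ with $-ng-[1_\boldA]\in K_0(\boldA)^+$, is one you yourself show to be insufficient: in a non-Archimedean linearly ordered $K_0$ (for instance $\Z^2$ with the lexicographic order, which is a countable unperforated interpolation group and hence arises as the ordered $K_0$ of a stably finite AF algebra) a negative infinitesimal $g$ is never detected by any order-unit comparison. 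The closing suggestions (``compare within the Archimedean class,'' ``exploit stable finiteness more directly'') are placeholders, not arguments. So as written, the proposal establishes only that the label set of $K_0(\boldA)^+$ is c.e.\ and that its complement is the intersection of a c.e.\ set with a co-c.e.\ set, which falls short of computability.

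For comparison, the paper's own proof is a two-line affair: $K_0(\boldA)^+$ is c.e.\ by Proposition \ref{prop:cone.ce}, the map $x\mapsto -x$ is computable, hence $-K_0(\boldA)^+$ is c.e., and by linearity these two c.e.\ sets of labels cover all of $F_\omega$; implicitly, membership is then decided by enumerating both in parallel. Note that this argument does not engage with the point where you got stuck: the two cones overlap in $\{0\}$, and a label of $0$ can appear in the enumeration for $-K_0(\boldA)^+$ before it appears in the one for $K_0(\boldA)^+$, in which case the parallel search answers incorrectly. So the obstacle you isolated is a genuine subtlety that the paper's proof passes over in silence. It disappears the moment the set of labels of $0$ is decidable---that is, the moment the kernel of $K_0(\boldA^\#)$ is computable---since then the complement of the positive cone is the intersection of the c.e.\ set of labels of $-K_0(\boldA)^+$ with the computable set of labels of nonzero elements, hence c.e. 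That extra hypothesis is available in the paper's only application of this corollary (Theorem \ref{thm:K0.UHF}), where $\mathcal{D}(\boldA^\#)$, and therefore $K_0(\boldA^\#)$ by Proposition \ref{Grothendieckcomputable}, has already been shown to be a computable presentation. If you add that hypothesis (or restrict to the UHF setting), your argument completes immediately; without it, your proposal is missing precisely the certificate of strictness that you identified, and no route to it is given.
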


\begin{proof}
This follows from Proposition \ref{prop:cone.ce} together with the easy observation that, for any presentation $G^\#$ of of an abelian group, the map $x\mapsto -x$ is a computable map from $G^\#$ to itself.
\end{proof}

\subsection{Non-unital \cstar-algebras}

In this section, we deviate from our convention and do not assume that $\boldA$ is necessarily unital.  Recall that the \emph{unitization} 
$\widetilde{\boldA}$ of $\boldA$ is the \cstar-algebra with underlying set $\widetilde{\boldA}=\{(a,\alpha) \ : \ a\in A, \alpha\in 
\mathbb{C}\}$, with addition and scalar multiplication given coordinatewise, and multiplication given by
$(a,\alpha)\cdot (b,\beta)=(ab+\beta a+\alpha b,\alpha\beta)$.  Equipped with these operations, $\widetilde{\boldA}$ is a unital *-algebra
with unit $(0,1)$.  One has *-algebra homomorphisms $\iota:\boldA\to \widetilde{\boldA}$ and $\pi:\widetilde{\boldA}\to \mathbb{C}$ given by
$\iota(a)=(a,0)$ and $\pi(a,\alpha)=\alpha$.  Note that $\iota(\boldA)$ is an ideal of $\widetilde{\boldA}$, which we usually identify with 
$\boldA$ itself. $\widetilde{\boldA}$ is in fact a \cstar-algebra under the norm 
$\|x\|_{\widetilde{\boldA}}=\max(|\pi(x)|,\sup\{\|ax\|_{\boldA} \ : \ a\in A, \|a\|\leq 1\})$.  
Note that when $\boldA$ is unital, then $\widetilde{\boldA}$ is isomorphic to $\boldA\oplus
\mathbb{C}$.

If $\boldA^\#$ is a presentation of $\boldA$, then one obtains a presentation $\widetilde{\boldA}^\#$ on $\widetilde{\boldA}$ by declaring that the $\langle m,k\rangle$-th special point of $\widetilde{\boldA}^\#$ is the pair $(a,\alpha)$, where $a$ is the $m$-special point of $\boldA^\#$ and $\alpha$ is the $k$-th special point of the standard presentation of $\mathbb{C}$.  It follows immediately that if $\boldA^\#$ is a left-c.e. presentation, then so is $\widetilde{\boldA}^\#$.

An alternate description of the unitization in terms of universal \cstar-algebras will prove useful.  Namely, $\widetilde{\boldA}$ is the universal \cstar-algebra generated by $\boldA$ and an additional indeterminate $x$ subject to all of the relations satisfied by $\boldA$ together with the relations stating that $x$ is a contraction that serves as a unit for the algebra.  In particular, if $\boldA^\#$ is a presentation of $\boldA$, then in the previous sentence, one may replace the statement ``all of the relations satisfied by $\boldA$'' by the statement ``all rational relations $\|p(\vec a)\|\leq r$'' satisfied by the special points of $\boldA^\#$.  In this way, one can equip $\widetilde{\boldA}$ with the corresponding universal presentation $\widetilde{\boldA}^{\#,u}$ (see \cite[Subsection 2.2]{Goldbring.2024+}), which is easily seen to be computably isomorphic to the previously defined presentation $\widetilde{\boldA}^\#$. 

\begin{proposition}
If $\boldA^\#$ is a c.e. (resp. computable) presentation of $\boldA$, then the induced presentation on $\widetilde{\boldA}^\#$ is also c.e. (resp. computable).  Moreover, an index for $\widetilde{\boldA}^\#$ can be computed from an index for $\boldA^\#$. 
\end{proposition}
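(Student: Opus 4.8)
The plan is to sidestep a direct computation of the unitization norm $\|y\|_{\widetilde{\boldA}}=\max\bigl(|\pi(y)|,\sup\{\|ay\|_{\boldA}:a\in\boldA,\ \|a\|\le 1\}\bigr)$, whose supremum ranges over the whole unit ball of $\boldA$ and is not obviously approximable from above, and instead to route everything through the universal presentation $\widetilde{\boldA}^{\#,u}$ introduced in the paragraph preceding the statement. Since that paragraph already records that $\widetilde{\boldA}^{\#,u}$ is computably isomorphic to $\widetilde{\boldA}^\#$, it will suffice to show (i) that being c.e.\ (resp.\ computable) is transported across a computable $*$-isomorphism, and (ii) that $\widetilde{\boldA}^{\#,u}$ is itself c.e.\ (resp.\ computable), with all constructions uniform in the given indices.

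First I would dispatch the transport step (i). If $\psi$ is a computable $*$-isomorphism from $\boldA_0^\#$ to $\boldA_1^\#$, then its inverse is computable as well and $\psi$ is isometric. Hence, given a rational point $b$ of $\boldA_1^\#$, one computes rational points $c_k$ of $\boldA_0^\#$ with $\|\psi^{-1}(b)-c_k\|<2^{-k}$; since $\|b\|_{\boldA_1}=\|\psi^{-1}(b)\|_{\boldA_0}$ and $\left|\,\|\psi^{-1}(b)\|-\|c_k\|\,\right|<2^{-k}$, the norm-approximation algorithm for $\boldA_0^\#$—decreasing upper bounds in the c.e.\ case, two-sided approximation in the computable case—yields the corresponding approximations to $\|b\|_{\boldA_1}$. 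Thus $\boldA_1^\#$ inherits the relevant property from $\boldA_0^\#$, uniformly in the given indices.

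Next I would verify step (ii). By the alternate description, $\widetilde{\boldA}^{\#,u}$ is the universal presentation obtained from $\boldA^\#$ by adjoining one new generator $x$ subject to the relation that $x$ is a contraction serving as a two-sided unit; concretely, the relations imposed on $x$—namely $\|x\|\le 1$ together with $xa_i=a_ix=a_i$ for every special point $a_i$ of $\boldA^\#$—form a computable family, while the relations inherited from $\boldA^\#$ are exactly the valid rational relations $\|p(\vec a)\|\le r$. These inherited relations are c.e.\ (resp.\ computable) precisely because $\boldA^\#$ is: right-c.e.\ data makes the valid strict inequalities $\|p(\vec a)\|<r$ enumerable, and computable data makes the relation set decidable in the required sense. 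I would then invoke the universal-presentation machinery of \cite[Subsection 2.2]{Goldbring.2024+}—used in the same spirit as \cite[Lemma 2.4 and Proposition 2.6]{Goldbring.2024+} above—to conclude that the universal presentation built from a c.e.\ (resp.\ computable) family of relations is again c.e.\ (resp.\ computable), with an index computable from the input data.

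Combining (i) and (ii) yields the proposition: $\widetilde{\boldA}^{\#,u}$ is c.e.\ (resp.\ computable), hence so is the computably isomorphic $\widetilde{\boldA}^\#$, and since every passage is effective, an index for $\widetilde{\boldA}^\#$ is computed from an index for $\boldA^\#$. The hard part will not be any single estimate but the bookkeeping around the inherited relation set: confirming that right-c.e.\ data supplies a c.e.\ set of valid (strict) norm relations adequate to present the universal algebra, and that these combine correctly with the unit and contraction relations on $x$ inside the cited framework.
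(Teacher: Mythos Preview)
Your route through the universal presentation $\widetilde{\boldA}^{\#,u}$ is essentially what the paper does for the c.e.\ case: the paper simply cites \cite[Theorem~3.3]{fox2022computable}, which is exactly the statement that a universal \cstar-algebra with a c.e.\ set of relations has a c.e.\ presentation. Your transport step (i) is correct, and so the c.e.\ half of the proposition goes through.

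The gap is in the computable case. The machinery in \cite[Subsection~2.2]{Goldbring.2024+} and Fox's Theorem~3.3 produce \emph{right-c.e.}\ presentations from c.e.\ relation sets; there is no general principle saying that a decidable relation set yields a \emph{computable} universal presentation, and the results you invoke ``in the same spirit'' (\cite[Lemma~2.4 and Proposition~2.6]{Goldbring.2024+}) are specifically about tensor products with finite-dimensional algebras, where one has an explicit norm formula. So your step (ii) does not, as written, give computability of $\widetilde{\boldA}^{\#,u}$ when $\boldA^\#$ is computable. The paper handles this by splitting computable into right-c.e.\ plus left-c.e.: right-c.e.\ comes from the universal description (Fox), while left-c.e.\ is read off directly from the norm formula
\[
\|(c,\gamma)\|_{\widetilde{\boldA}}=\max\bigl(|\gamma|,\ \sup_{\|a\|\le 1}\|ac+\gamma a\|_{\boldA}\bigr),
\]
since a supremum of this kind is naturally approximable from below once $\boldA^\#$ is. In other words, the very formula you set out to sidestep is exactly what supplies the missing half (left-c.e.) of the computable conclusion; your detour through the universal presentation only recovers the right-c.e.\ half.
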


\begin{proof}
The statement for c.e. presentations follows immediately from \cite[Theorem 3.3]{fox2022computable}.  The statement for computable presentations follows from the  statement for right-c.e. presentations and the corresponding statement for left-c.e. presentations mentioned above.
\end{proof}

Given a *-algebra homomorphism $\phi:\boldA_0\to \boldA_1$, we obtain a *-algebra homomorphism $\widetilde{\phi}:\widetilde{\boldA_0}\to \widetilde{\boldA_1}$ given by $\widetilde{\phi}(a,\alpha)=(\phi(a),\alpha)$.  The following fact is clear.

\begin{proposition}
If $\phi:\boldA_0^\#\to \boldA_1^\#$ is a computable *-algebra homomorphism, then so is $\widetilde{\phi}:\widetilde{\boldA_0}^\#\to \widetilde{\boldA_1}^\#$.
\end{proposition}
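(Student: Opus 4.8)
The plan is to verify that $\widetilde{\phi}$ is a computable map from $\widetilde{\boldA_0}^\#$ to $\widetilde{\boldA_1}^\#$ by unwinding the definitions, exactly as the paper signals by calling the fact ``clear.'' Recall that a bounded linear map is a computable $*$-homomorphism precisely when there is an algorithm that, on input a rational point $a$ of the source presentation and a precision $k$, outputs a rational point $b$ of the target presentation with $\|\widetilde{\phi}(a) - b\| < 2^{-k}$. So the whole task reduces to converting rational points of $\widetilde{\boldA_0}^\#$, evaluating $\widetilde{\phi}$ symbolically, and approximating the result by rational points of $\widetilde{\boldA_1}^\#$.

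First I would recall the structure of the induced presentation: the special points of $\widetilde{\boldA_0}^\#$ are the pairs $(a_m, \alpha_k)$ with $a_m$ a special point of $\boldA_0^\#$ and $\alpha_k \in \mathbb{Q}(i)$, indexed via the pairing $\langle m, k \rangle$. A rational point of $\widetilde{\boldA_0}^\#$ is then a $*$-polynomial with $\mathbb{Q}(i)$-coefficients in these special points; using the coordinatewise addition and scalar multiplication together with the product rule $(a,\alpha)(b,\beta) = (ab + \beta a + \alpha b, \alpha\beta)$, any such rational point can be effectively rewritten in the canonical form $(r, \lambda)$, where $r$ is a rational point of $\boldA_0^\#$ (a $*$-polynomial in the $a_m$) and $\lambda \in \mathbb{Q}(i)$. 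Since $\widetilde{\phi}(r,\lambda) = (\phi(r), \lambda)$ and $\phi(r)$ is again a $*$-polynomial evaluated at the $\phi(a_m)$, I would use the hypothesis that $\phi$ is computable from $\boldA_0^\#$ to $\boldA_1^\#$ to produce, for each $m$ occurring in $r$, a rational point of $\boldA_1^\#$ approximating $\phi(a_m)$ to any desired precision. Substituting these approximations into the polynomial $r$ and controlling the propagated error (using that the coefficients and the $\mathbb{Q}(i)$-arithmetic are exact, and that polynomial evaluation is Lipschitz on bounded sets) yields a rational point $s$ of $\boldA_1^\#$ with $\|\phi(r) - s\|$ as small as desired. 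The pair $(s, \lambda)$ is then a rational point of $\widetilde{\boldA_1}^\#$, and since the unitization norm dominates each coordinate's contribution we get $\|\widetilde{\phi}(r,\lambda) - (s,\lambda)\|_{\widetilde{\boldA_1}} = \|(\phi(r) - s, 0)\|_{\widetilde{\boldA_1}} \leq \|\phi(r) - s\|_{\boldA_1}$, which is controllable to within $2^{-k}$.

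The only genuine bookkeeping — and the closest thing to an obstacle — is the effective reduction of an arbitrary rational point of $\widetilde{\boldA_0}^\#$ to the canonical form $(r, \lambda)$ and the accompanying error analysis for polynomial evaluation under approximate substitution; but both are entirely routine once one observes that the unitization product rule is itself an exact algebraic operation on the $(\,\cdot\,,\,\cdot\,)$ coordinates and that polynomial evaluation at approximated points introduces only a modulus-of-continuity error that can be driven below $2^{-k}$ by demanding sufficient precision from the algorithm witnessing the computability of $\phi$. Because every step in this chain is effective and composes uniformly, the resulting procedure is an algorithm of the required form, so $\widetilde{\phi}$ is a computable $*$-homomorphism from $\widetilde{\boldA_0}^\#$ to $\widetilde{\boldA_1}^\#$.
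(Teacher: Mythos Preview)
Your proof is correct and is exactly the routine unwinding of definitions that the paper omits by declaring the fact ``clear.'' One minor simplification: since the definition of a computable map accepts arbitrary rational points of $\boldA_0^\#$ as input (not just special points), you may apply the computability of $\phi$ directly to the rational point $r$ to obtain $s$ with $\|\phi(r)-s\|_{\boldA_1}<2^{-k}$, which bypasses the per-generator approximation and the polynomial error-propagation bookkeeping entirely.
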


\begin{corollary}
The unitization functor from the category of c.e. (resp. computable) presentations of \cstar-algebras to the category of c.e. (resp. computable presentations) of unital \cstar-algebras is computable.
\end{corollary}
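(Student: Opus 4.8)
The plan is to verify directly that the unitization assignment meets the definition of a computable functor, namely that there is a single algorithm sending an index of an object or of a morphism to an index of its image. The two immediately preceding propositions supply exactly the two halves of this verification, so the proof amounts to assembling them and recording functoriality.

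For the action on objects, I would invoke the proposition asserting that if $\boldA^\#$ is a c.e. (resp. computable) presentation then $\widetilde{\boldA}^\#$ is c.e. (resp. computable), together with its explicit clause that an index of $\widetilde{\boldA}^\#$ is computed from one for $\boldA^\#$. This is precisely the statement that the object part of the functor is computable, so nothing further is required here.

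For the action on morphisms, I would start from a computable $*$-homomorphism $\phi : \boldA_0^\# \to \boldA_1^\#$ and trace through the construction $\widetilde{\phi}(a,\alpha) = (\phi(a),\alpha)$. A rational point of $\widetilde{\boldA_0}^\#$ has the form $(a,\alpha)$ with $a$ a rational point of $\boldA_0^\#$ and $\alpha \in \Q(i)$; applying $\widetilde{\phi}$ yields $(\phi(a),\alpha)$, and since $\phi$ is computable we can approximate $\phi(a)$ by rational points of $\boldA_1^\#$ to within $2^{-k}$, so $(\phi(a),\alpha)$ is approximated to within $2^{-k}$ in $\widetilde{\boldA_1}^\#$. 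The essential observation is that this procedure is uniform in the given index of $\phi$, so an $(\widetilde{\boldA_0}^\#,\widetilde{\boldA_1}^\#)$-index of $\widetilde{\phi}$ is computed from an index of $\phi$; this is the content of the preceding proposition, now read uniformly.

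Finally, I would note that the functoriality laws hold purely algebraically: unitization sends the identity on $\boldA$ to the identity on $\widetilde{\boldA}$ and satisfies $\widetilde{\psi \circ \phi} = \widetilde{\psi} \circ \widetilde{\phi}$, both immediate from the formula $\widetilde{\phi}(a,\alpha)=(\phi(a),\alpha)$. There is no genuine obstacle in this corollary; the only point demanding care is the uniformity assertion in the morphism step, which must be extracted from the (stated but not elaborated) preceding proposition rather than simply assumed.
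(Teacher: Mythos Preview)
Your proposal is correct and matches the paper's approach: the paper states this corollary with no proof, treating it as immediate from the two preceding propositions (one giving the uniform computability of $\boldA^\#\mapsto\widetilde{\boldA}^\#$ on objects, the other the computability of $\phi\mapsto\widetilde{\phi}$ on morphisms). Your write-up simply spells out this assembly, including the algebraic functoriality identities, which is entirely appropriate.
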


For $\boldA$ not necessarily unital, $K_0(\boldA)$ is defined to be the kernel of the map $K_0(\pi):K_0(\widetilde{\boldA})\to K_0(\mathbb{C})$.  In order to equip $K_0(\boldA)$ with a c.e. presentation, we need a couple of preliminary lemmas.  It is quite possible that these lemmas are folklore in the algorithmic group theory community, but since we could not find exact statements of them in the literature, we provide proofs for the convenience of the reader.

\begin{lemma}
If $\phi:G_0^\#\to G_1^\#$ is a computable map of group presentations and $G_1^\#$ is c.e., then $\ker(\phi)$ is a c.e. subset of $G_0^\#$.  Moreover, an index for $\ker(\phi)$ can be computed from indices for $\phi$ and $G_1^\#$.
\end{lemma}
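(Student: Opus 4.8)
The plan is to exhibit a c.e. set of $G_0^\#$-labels for the elements of $\ker(\phi)$ by reducing membership in the kernel to membership in the (c.e.) kernel of the presentation $G_1^\#$. Recall that $G_1^\#=(G_1,\nu_1)$ comes with its labeling epimorphism $\nu_1:F_\omega\to G_1$, and being c.e. means precisely that the relation $\ker(\nu_1)=\{(u,u') : \nu_1(u)=\nu_1(u')\}$ is c.e.\ (Definition \ref{def:pres.etc}). The key observation is that an element $g\in G_0$ lies in $\ker(\phi)$ if and only if $\phi(g)=1_{G_1}$, and that $1_{G_1}$ has a concrete $G_1^\#$-label, namely the word $x_0 x_0^{-1}\in F_\omega$ (or any fixed word representing the identity).

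First I would fix the computable map $F:F_\omega\to F_\omega$ witnessing that $\phi$ is a computable map of group presentations (Definition \ref{def:semig.map.comp}); thus for every $g\in G_0$ and every $G_0^\#$-label $w$ of $g$, the word $F(w)$ is a $G_1^\#$-label of $\phi(g)$. Then I would characterize the set of $G_0^\#$-labels of kernel elements as
\[
\{\, w\in F_\omega : (F(w),\, x_0 x_0^{-1})\in\ker(\nu_1)\,\}.
\]
Indeed, $w$ is a $G_0^\#$-label of some $g$ with $\phi(g)=1_{G_1}$ exactly when $\nu_1(F(w))=\nu_1(x_0 x_0^{-1})=1_{G_1}$, i.e.\ when $(F(w),x_0 x_0^{-1})$ belongs to $\ker(\nu_1)$.

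From here the c.e.\ conclusion is immediate: the displayed set is obtained by composing the computable function $w\mapsto (F(w),x_0x_0^{-1})$ with the c.e.\ set $\ker(\nu_1)$, so it is the preimage of a c.e.\ set under a computable map and hence c.e. By Definition \ref{def:c.e.grp.pres}, this says exactly that $\ker(\phi)$ is a c.e.\ subset of $G_0^\#$. For the uniformity claim, I would note that an index for $F$ can be read off from an index for $\phi$, and an index for $\ker(\nu_1)$ is precisely a c.e.\ index for $G_1^\#$; combining these via the standard effective operations on c.e.\ sets yields an index for $\ker(\phi)$ computable from indices for $\phi$ and $G_1^\#$.

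I do not expect a genuine obstacle here; the statement is essentially the closure of c.e.\ sets under computable preimages once the right characterization of kernel-labels is written down. The only point requiring a small amount of care is the bookkeeping between the two notions of ``label'': a word $w\in F_\omega$ is declared to lie in $\ker(\phi)$'s label-set when $\nu_0(w)\in\ker(\phi)$, and one must confirm that the equivalence above is phrased purely in terms of $w$ (via $F(w)$) without needing to decide whether distinct words $w,w'$ label the same element of $G_0$ — which we cannot do unless $G_0^\#$ is assumed c.e. Since the criterion only ever tests $F(w)$ against the identity in $G_1^\#$, no such assumption on $G_0^\#$ is needed, which is consistent with the hypotheses of the lemma.
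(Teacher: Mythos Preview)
Your proposal is correct and essentially identical to the paper's own proof: both fix a computable witness $F$ for $\phi$, pick a label $w'$ for the identity in $G_1$, and observe that $\nu_0^{-1}[\ker(\phi)]=\{w:(F(w),w')\in\ker(\nu_1)\}$ is c.e.\ as the computable preimage of a c.e.\ set. Your explicit choice $w'=x_0x_0^{-1}$ in place of the paper's ``fix some $w'$'' even makes the uniformity claim slightly more transparent.
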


\begin{proof}
Suppose that $G_i^\#=(G_i,\nu_i)$, $i=0,1$, and fix $w'\in F_\omega$ such that $\nu_1(w)=0$.  Take also a computable function $f:F_\omega\to F_\omega$ witnessing that $\phi$ is computable.  Then $\nu_0^{-1}[\ker(\phi)]=\{w\in F_\omega \ : \ (f(w),w')\in \ker(\nu_1)\}$.  Since $f$ is computable and $\ker(\nu_1)$ is c.e., we see that $\nu_0^{-1}[\ker(\phi)]$ is a c.e. subset of $F_\omega$, whence $\ker(\phi)$ is a c.e. subset of $G_0^\#$.
\end{proof}

\begin{lemma}
If $G^\#$ is a c.e. group presentation and $H$ is a c.e. subgroup of $G^\#$, then there is a c.e. presentation $H^\#$ of $H$ such that the inclusion map is a computable map $H^\#\to G^\#$.  Moreover, an index for $H^\#$ can be computed from an index for $G^\#$ and an index for $H$ as a c.e. set.
\end{lemma}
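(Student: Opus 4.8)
Write $G^\# = (G,\nu)$, so $\nu : F_\omega \to G$ is the labeling epimorphism and $\ker(\nu)$ is c.e. The hypothesis that $H$ is a c.e.\ subgroup of $G^\#$ means, by Definition \ref{def:c.e.grp.pres}, that $L := \nu^{-1}[H] = \{w \in F_\omega : \nu(w) \in H\}$ is a c.e.\ subset of $F_\omega$. Since $H$ is a subgroup it contains the identity, so $L$ contains the empty word and is in particular nonempty; thus there is a total computable function $i \mapsto w_i$ whose range is exactly $L$, and an index for it can be computed from an index for $H$ as a c.e.\ set. The plan is to use the words $w_i$ as the $H^\#$-labels of a generating sequence for $H$.

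Concretely, I would define a relabeling homomorphism $\Phi : F_\omega \to F_\omega$ by setting $\Phi(x_i) = w_i$ on the free generators and extending homomorphically (so $\Phi(x_i^{-1}) = w_i^{-1}$); this $\Phi$ is computable, uniformly in the index for the enumeration. I then set $\mu := \nu \circ \Phi$ and declare $H^\# := (H,\mu)$. That $\mu$ is a homomorphism is immediate; surjectivity onto $H$ holds because for every $h \in H$ there is some $w \in L$ with $\nu(w) = h$, and writing $w = w_i$ gives $\mu(x_i) = \nu(w_i) = h$. Hence $\mu$ is a group epimorphism of $F_\omega$ onto $H$, so $H^\#$ is a legitimate presentation of $H$.

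The inclusion map is handled by the same $\Phi$. If $h \in H$ and $u$ is any $H^\#$-label of $h$, i.e.\ $\mu(u) = h$, then $\nu(\Phi(u)) = \mu(u) = h$, so $\Phi(u)$ is a $G^\#$-label of $h$ itself; thus $\Phi$ witnesses, in the sense of Definition \ref{def:semig.map.comp}, that the inclusion $H \hookrightarrow G$ is a computable map from $H^\#$ to $G^\#$.

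Finally, to see that $H^\#$ is c.e.\ I would compute its kernel from that of $G^\#$: since $\mu = \nu \circ \Phi$, a pair $(u,u')$ lies in $\ker(\mu)$ if and only if $(\Phi(u), \Phi(u')) \in \ker(\nu)$. As $\Phi$ is computable and $\ker(\nu)$ is c.e., this exhibits $\ker(\mu)$ as c.e., with an index computable from indices for $\Phi$ and for $\ker(\nu)$. Tracing through, every step is effective in the given data, which yields the uniformity asserted in the ``moreover'' clause. I do not expect a genuine obstacle here; the only points demanding care are that the enumeration of $L$ be made \emph{total} (so that every generator $x_i$ receives a label and $\mu$ is defined on all of $F_\omega$) and that surjectivity of $\mu$ be checked, both of which are routine once $L$ is known to be nonempty.
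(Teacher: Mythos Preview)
Your proof is correct and follows essentially the same approach as the paper: both take a total computable enumeration $(w_i)$ of the c.e.\ set $\nu^{-1}[H]$, define the relabeling homomorphism $\Phi:F_\omega\to F_\omega$ by $x_i\mapsto w_i$, and take $H^\#=(H,\nu\circ\Phi)$, with $\Phi$ itself witnessing computability of the inclusion. Your write-up is in fact more explicit than the paper's about surjectivity of $\mu$, the nonemptiness of $L$ needed for a total enumeration, and the verification that $\ker(\mu)$ is c.e.
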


\begin{proof}
Let $U$ be the set of all $G^\#$-labels of elements of $S$.  Then $U$ is c.e. and $U$ is a subgroup of $F_\omega$. 
Let $(w_n)_{n \in \N}$ be an effective enumeration of $U$.  Set $\nu_0(x_n) = w_n$, extend $\nu_0$ to an epimorphism of 
$F_\omega$ onto $U$, and set $U^\# = (U, \nu_0)$.  Then $U^\#$ is a c.e. presentation of $U$.  Define $H^\#$ by declaring 
$w$ to be an $H^\#$-label of $a \in H$ if $w$ is a $U^\#$-label of a $G^\#$-label of $a$.  It follows that 
$H^\#$ has all the required properties.
%Write $G^\#=(G,\nu)$ and take a computable surjection $f:\N\to \nu^{-1}[H]$.  Note that $\nu^{-1}[H]$ is a subgroup of $F_\omega$.  Define $\eta:\N\to H$ by $\eta_H=\nu\circ f$ and $g:\N\times \N\to \N$ by defining $g(m,n)$ to be the least $k\in \N$ such that $f(k)=f(m)\cdot f(n)$; note that this is possible since multiplication in $F_\omega$ is computable.  Note also that $\ker(\eta)=\{(m,n) \ : \ (f(m),f(n))\in \ker(\nu)\}$ is c.e. since $f$ is computable and $\ker(\nu)$ is c.e.  Consequently, by Lemma \ref{equivalentsemigroup}, we obtain a c.e. presentation $H^\#=(H,\nu_H)$ of $H$, where $\nu_H$ is determined by the condition $\nu_H(x_n)=\eta(n)=\nu(f(n))$.  We leave it to the reader to verify that the inclusion mapping is a computable map from $H^\#$ to $G^\#$.    Since the computable surjection $f$ depends on an index for $H$, the proof yields an index for $H^\#$ from indices for $G^\#$ and $H$.  
\end{proof}

Given a c.e. presentation $\boldA^\#$ of $\boldA$, define $K_0(\boldA^\#)$ to be the c.e. presentation of $K_0(\boldA)=\ker(\pi)$ induced by the c.e. presentation of $K_0(\widetilde{\boldA}^\#)$, where $\widetilde{\boldA}^\#$ is the c.e. presentation of $\widetilde{\boldA}$ induced by the presentation $\boldA^\#$.

\begin{lemma}
Suppose that $\phi:\boldA_0^\#\to \boldA_1^\#$ is a computable *-homomorphism.  Then the induced map $K_0(\phi):K_0(\boldA_0)\to K_0(\boldA_1)$ is a computable map from $K_0(\boldA_0^\#)$ to $K_0(\boldA_1^\#)$.  Moreover, an index for $K_0(\phi)$ can be computed from indices for $\phi$, $A_0^\#$, and $A_1^\#$.
\end{lemma}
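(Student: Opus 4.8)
The plan is to reduce computability of $K_0(\phi)$ for the (possibly nonunital) algebras $\boldA_0,\boldA_1$ to the unital case already handled earlier in the excerpt, by tracing through the two-step definition: $K_0(\boldA_i)$ is defined as $\ker(K_0(\pi_i))$ inside $K_0(\widetilde{\boldA_i})$, and $K_0(\phi)$ is the restriction of $K_0(\widetilde{\phi})$ to this kernel. First I would recall that by the unital case (Corollary \ref{cor:K0.comp.func}), since $\widetilde{\phi}:\widetilde{\boldA_0}^\#\to\widetilde{\boldA_1}^\#$ is a computable $*$-homomorphism between c.e. presentations of \emph{unital} algebras (the unitizations are unital, and $\widetilde{\phi}$ is computable by the earlier Proposition on $\widetilde{\phi}$), the induced map $K_0(\widetilde{\phi})$ is a computable map from $K_0(\widetilde{\boldA_0}^\#)$ to $K_0(\widetilde{\boldA_1}^\#)$, with an index computable from indices for $\widetilde{\phi}$, $\widetilde{\boldA_0}^\#$, and $\widetilde{\boldA_1}^\#$ — all of which are in turn computable from indices for $\phi$, $\boldA_0^\#$, $\boldA_1^\#$ by the uniform Corollaries on the unitization functor.

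The key observation making the reduction work is that $K_0(\phi)$ is simply the restriction of $K_0(\widetilde{\phi})$ to the subgroup $K_0(\boldA_0)=\ker(K_0(\pi_0))$, and that this restriction lands inside $K_0(\boldA_1)=\ker(K_0(\pi_1))$ by functoriality (naturality of $\pi$ gives $\pi_1\circ\widetilde{\phi}=\iota_\C\circ\pi_0$ at the level of unitizations, whence $K_0(\pi_1)\circ K_0(\widetilde{\phi})$ kills $\ker(K_0(\pi_0))$). Now recall that the presentation $K_0(\boldA_i^\#)$ was, by definition just above this lemma, the c.e. presentation of the subgroup $\ker(K_0(\pi_i))$ supplied by the preceding two lemmas: the lemma showing $\ker(\psi)$ is a c.e. subset of $G_0^\#$ for a computable $\psi$ between c.e. presentations, and the lemma producing a c.e. subgroup presentation $H^\#$ whose inclusion into $G^\#$ is computable, both uniformly. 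So I would invoke those two lemmas to fix explicit inclusion maps $j_i:K_0(\boldA_i^\#)\to K_0(\widetilde{\boldA_i}^\#)$ that are computable with indices computable from the data.

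The final step is the diagram-chase on labels. Given a $K_0(\boldA_0^\#)$-label $w$ of an element $g\in K_0(\boldA_0)$, apply the computable inclusion $j_0$ to obtain a $K_0(\widetilde{\boldA_0}^\#)$-label of $g$ viewed in $K_0(\widetilde{\boldA_0})$; apply the computable map $K_0(\widetilde{\phi})$ to get a $K_0(\widetilde{\boldA_1}^\#)$-label of $K_0(\widetilde{\phi})(g)=K_0(\phi)(g)$; and then, since the image lies in the c.e. subgroup $K_0(\boldA_1)$, search effectively through the c.e. set of $K_0(\boldA_1^\#)$-labels (pushed forward by the computable $j_1$ and tested against $K_0(\widetilde{\boldA_1}^\#)$'s c.e. kernel) for a preimage label, which is guaranteed to exist. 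Composing these computable operations yields a computable map $F:F_\omega\to F_\omega$ sending each $K_0(\boldA_0^\#)$-label of $g$ to a $K_0(\boldA_1^\#)$-label of $K_0(\phi)(g)$, as required, and each step's index is uniformly computable from indices for $\phi$, $\boldA_0^\#$, and $\boldA_1^\#$. The only mild obstacle is the last search: one must check that membership of $K_0(\widetilde{\phi})(g)$ in the c.e. subgroup $K_0(\boldA_1)$ is genuinely detectable, which it is because $K_0(\boldA_1^\#)$ presents that subgroup via the c.e.-subgroup lemma, so the relevant label set is c.e. and the search terminates.
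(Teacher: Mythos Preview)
Your proposal is correct and follows essentially the same route as the paper: observe that $K_0(\phi)$ is the restriction of $K_0(\widetilde{\phi})$ to $K_0(\boldA_0)$, use that the inclusions $K_0(\boldA_i^\#)\hookrightarrow K_0(\widetilde{\boldA_i}^\#)$ are computable (by the subgroup-presentation lemma), and invoke the unital case for the computability of $K_0(\widetilde{\phi})$, all uniformly. Your final effective-search step to recover a $K_0(\boldA_1^\#)$-label from a $K_0(\widetilde{\boldA_1}^\#)$-label is a detail the paper leaves implicit.
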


\begin{proof}
Note that $K_0(\phi)$ is the restriction of $K_0(\widetilde{\phi}):K_0(\widetilde{\boldA_0})\to K_0(\widetilde{\boldA_1})$ to $K_0(\boldA_0)$.
The inclusion map is a computable map from $K_0(\boldA_i^\#)$ to  $K_0(\widetilde{\boldA_i}^\#)$.  The result then follows from the fact that 
$K_0(\phi)$ is a computable map of $K_0(\widetilde{\boldA_0}^\#)$ to $K_0(\widetilde{\boldA_1}^\#)$.  
Since all of the assertions in this proof 
have already been show to be uniform, so is the conclusion of the lemma.
\end{proof}

Putting all of the pieces from this subsection together, we obtain the following.

\begin{corollary}
$K_0$ is a computable functor from the category of c.e. presentations of (not necessarily unital) \cstar algebras to the category of c.e. presentations of groups.
\end{corollary}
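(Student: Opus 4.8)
The plan is to assemble the functor entirely from the ingredients established earlier in this subsection, so the work is purely one of tracking uniformities. Being a computable functor amounts to two algorithms: one that, from an index of a c.e. presentation $\boldA^\#$ of a (possibly non-unital) \cstar-algebra $\boldA$, computes an index for the c.e. presentation $K_0(\boldA^\#)$ of the group $K_0(\boldA)$; and one that, from an index of a computable $*$-homomorphism $\phi : \boldA_0^\# \to \boldA_1^\#$, computes an index for the computable group homomorphism $K_0(\phi)$. The morphism part is immediate: it is exactly the content of the final lemma above, which produces a computable index for $K_0(\phi)$ from indices for $\phi$, $\boldA_0^\#$, and $\boldA_1^\#$.

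For the object part, I would chain the uniform constructions in the evident order. First, from an index for $\boldA^\#$, compute an index for the c.e. presentation $\widetilde{\boldA}^\#$ using the computability of the unitization functor. Since $\widetilde{\boldA}$ is unital, the (unital) computable functor of Corollary \ref{cor:K0.comp.func} then yields, uniformly, an index for the c.e. presentation $K_0(\widetilde{\boldA}^\#)$. Next observe that the canonical quotient $\pi : \widetilde{\boldA} \to \mathbb{C}$ is a computable $*$-homomorphism from $\widetilde{\boldA}^\#$ to the standard presentation of $\mathbb{C}$: it carries each special point $(a,\alpha)$ to $\alpha$, hence each rational point to the corresponding rational point of $\mathbb{C}^\#$. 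Applying the morphism part of Corollary \ref{cor:K0.comp.func} to $\pi$ produces a computable group homomorphism $K_0(\pi) : K_0(\widetilde{\boldA}^\#) \to K_0(\mathbb{C})$, whose index is computable from that of $\boldA^\#$; note that $K_0(\mathbb{C})$ is a c.e. (indeed computable) presentation, again by the unital functor applied to the computable presentation $\mathbb{C}^\#$.

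By definition $K_0(\boldA) = \ker(K_0(\pi))$. Since the target presentation $K_0(\mathbb{C})$ is c.e., the kernel lemma gives that $\ker(K_0(\pi))$ is a c.e. subgroup of $K_0(\widetilde{\boldA}^\#)$ with a computable index, and the subgroup lemma then equips this subgroup with a c.e. presentation $K_0(\boldA^\#)$, with inclusion into $K_0(\widetilde{\boldA}^\#)$ computable and index again computable from the previous data. Composing these uniformities furnishes the required algorithm on objects. Functoriality, that is, preservation of identities and composition, is inherited from the classical functoriality of $K_0$ together with the analogous statements already recorded for $\mathcal{D}$ and $\mathcal{G}$. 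I do not expect a substantive obstacle: every step has been isolated and shown to hold uniformly, so the only genuine verifications are that $\pi$ is computable and that the two group-theoretic lemmas compose so as to keep the inclusion $K_0(\boldA) \hookrightarrow K_0(\widetilde{\boldA})$ computable; the mildest care needed is to confirm that the presentation $K_0(\boldA^\#)$ produced by this chain coincides with the one fixed by definition just before the final lemma.
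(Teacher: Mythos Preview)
Your proposal is correct and is precisely the argument the paper intends: the paper gives no separate proof, merely remarking that the corollary follows by ``putting all of the pieces from this subsection together,'' and your write-up is a faithful unpacking of that remark, chaining the uniform unitization, the unital $K_0$ functor, the computability of $\pi$, the kernel lemma, the subgroup lemma, and the final lemma on morphisms.
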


\section{Computing $K_1$}\label{sec:K1}

Having established that c.e. presentations of $\boldA$ give rise to c.e. presentations of $K_0(\boldA)$, it is natural to turn our attention next to $K_1(\boldA)$.  Fortunately, our work on $K_0$ will allow us to accomplish this quite quickly.  In this section, we also abandon our convention that all \cstar-algebras are assumed to be unital.

Recall that for any \cstar-algebra $\boldA$, the \emph{suspension of $\boldA$} is the \cstar-algebra $C_0(0,1)\otimes \boldA$; we denote
this algebra by $S\boldA$.  It follows that 
$S\boldA \cong \{f\in C([0,1],\boldA) \ : \ f(0)=f(1)=0\}$ where the isomorphism is given by the mapping
$f\otimes a\mapsto (t\mapsto f(t)a)$.  Note that $S\boldA$ is not unital.

\begin{lemma}\label{inducedsuspension}
If $\boldA^\#$ is a c.e. (resp. computable) presentation, then there is an induced presentation $S\boldA^\#$ of $S\boldA$ that is also c.e. (resp. computable).  Moreover, an index for $S\boldA^\#$ can be computed from an index for $\boldA^\#$.
\end{lemma}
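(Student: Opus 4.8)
The plan is to realize $S\boldA = C_0(0,1)\otimes \boldA$ as a tensor product of $\boldA^\#$ with a fixed computable presentation of the commutative \cstar-algebra $C_0(0,1)$, and then to invoke the tensor product result \cite[Lemma 2.4 and Proposition 2.6]{Goldbring.2024+} already used in the proof of Proposition \ref{prop:MnA.pres.comp}.

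First I would equip $C_0(0,1)$ with a computable presentation. Regarding $(0,1)$ as the interior of $[0,1]$, take the two special points $f_1(t)=t(1-t)$ and $f_2(t)=t^2(1-t)$. These are self-adjoint elements of $C_0(0,1)$, and the \cstar-subalgebra they generate is dense: by the Stone--Weierstrass theorem for $C_0(X)$ it suffices that the generated $*$-subalgebra separates the points of $(0,1)$ and vanishes nowhere, and both hold since $f_2/f_1=t$ on $(0,1)$ and $f_1(t),f_2(t)\neq 0$ for all $t\in(0,1)$. For a rational $*$-polynomial $p$ (which has no constant term), the element $p(f_1,f_2)$ is a concrete polynomial function $q_p$ on $[0,1]$ with coefficients in $\Q(i)$ vanishing at both endpoints, so $\norm{p(f_1,f_2)}=\sup_{t\in[0,1]}\abs{q_p(t)}$. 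This supremum is a computable real, uniformly in an index for $p$, by standard computable analysis on the compact interval (bound the derivative of $\abs{q_p}^2$ to obtain a modulus of continuity and sample densely). Thus $C_0(0,1)$ carries a computable presentation $C_0(0,1)^\#$ with an explicit index.

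Next I would form the tensor presentation exactly as in Proposition \ref{prop:MnA.pres.comp}: let $S\boldA^\#$ be the presentation of $C_0(0,1)\otimes\boldA$ whose $\la m,k\ra$-th special point is the elementary tensor of the $m$-th special point of $C_0(0,1)^\#$ with the $k$-th special point of $\boldA^\#$. This is a presentation of $S\boldA$ because the algebraic tensor product of the two dense generated $*$-subalgebras is dense in $S\boldA$. Applying \cite[Lemma 2.4 and Proposition 2.6]{Goldbring.2024+} to the two factors $C_0(0,1)^\#$ and $\boldA^\#$ then shows that $S\boldA^\#$ is c.e. (resp. computable) whenever $\boldA^\#$ is, and that an index for $S\boldA^\#$ can be computed from an index for $\boldA^\#$ (the index for $C_0(0,1)^\#$ being a fixed constant).

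The main obstacle I anticipate is twofold. First, one must establish the uniform computability of the sup-norm on $C_0(0,1)^\#$; this is a routine computable-analysis argument on $[0,1]$, but it is the one place where genuine estimation is needed. Second, one must confirm that the cited tensor product lemma applies with a \emph{non-unital} first factor rather than the finite-dimensional $M_n(\C)$ used previously, and that the tensor \cstar-norm is unambiguous. The latter is immediate: $C_0(0,1)$ is commutative, hence nuclear, so the spatial and maximal norms on $C_0(0,1)\otimes\boldA$ coincide and the hypotheses of the cited result are met.
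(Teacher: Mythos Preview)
Your approach is essentially the paper's: fix a computable presentation of $C_0(0,1)$, form the tensor-product presentation, and invoke the results of \cite{Goldbring.2024+}. The paper uses rational polygonal paths vanishing at the endpoints as the special points of $C_0(0,1)^\#$ rather than your two polynomial generators, but either choice works and the verification that the resulting presentation of $C_0(0,1)$ is computable is routine in both cases.

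There is, however, a gap in your treatment of the \emph{computable} case. You invoke \cite[Lemma 2.4 and Proposition 2.6]{Goldbring.2024+} for both the c.e.\ and computable conclusions, as was done in Proposition~\ref{prop:MnA.pres.comp}; but there the first tensor factor was the finite-dimensional algebra $M_n(\C)$, and the paper conspicuously does \emph{not} reuse Proposition~2.6 here. For the c.e.\ case the paper instead cites \cite[Lemmas 2.4 and 2.5]{Goldbring.2024+}, and for the left-c.e.\ half of the computable case it argues directly from the identity
\[
\Bigl\lVert\sum_i f_i\otimes a_i\Bigr\rVert \;=\; \sup_{t\in(0,1)}\Bigl\lVert\sum_i f_i(t)\,a_i\Bigr\rVert
\]
together with the fact that $C_0(0,1)^\#$ is \emph{evaluative} in the sense of \cite{mcnicholl2024evaluative} (the map $(f,t)\mapsto f(t)$ is computable). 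Your nuclearity remark only guarantees that the tensor \cstar-norm is unambiguous; it does not show that the cited proposition applies with an infinite-dimensional first factor, and your anticipated obstacle about the non-unital first factor is really this issue in disguise. Your polynomial presentation is certainly evaluative, so the paper's argument transfers verbatim to your setup---you just need to supply it explicitly rather than defer to the citation.
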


\begin{proof}
    We begin by noting that $C_0(0,1)$ has a computable presentation $C_0(0,1)^\#$, namely the ones whose special points are the rational polygonal curves $\gamma : [0,1] \rightarrow \C$ so that 
$\gamma(0) = \gamma(1) = 0$.  Equip $S\boldA^\#$ with the presentation whose $\langle m,k\rangle$th special point is $f_m\otimes a_k$, where $f_m$ and $a_k$ are the $m$th rational point of $C_0(0,1)^\#$ and $k$th rational point of $\boldA^\#$ respectively.  By \cite[Lemmas 2.4 and 2.5]{Goldbring.2024+}, we have that $S\boldA^\#$ is c.e. when $\boldA^\#$ is c.e.  If $\boldA^\#$ is actually computable, then $S\boldA^\#$ is also left-c.e. (and hence computable) using the formula $$\left\|\sum_i f_i\otimes a_i\right\|=\sup_{t\in (0,1)}\left\|\sum_i f_i(t)a_i\right\|$$ and the fact that $C_0(0,1)^\#$ is \emph{evaluative} in the sense of \cite{mcnicholl2024evaluative}, that is, that the map $(f,t)\mapsto (f,t):(C_0(0,1)\times (0,1))^\#\to \mathbb{C}^\#$ is computable, where $(0,1)$ and $\mathbb{C}$ are given their standard presentations.
\end{proof}

The suspension operation is actually an endofunctor on the category of \cstar-algebras:  given a *-homomorphism $\phi:\boldA_0\to \boldA_1$, one gets another *-homomorphism $S\phi:S\boldA_0\to S\boldA_1$ given by $S(\phi)(\sum_i f_i\otimes a_i)=\sum_i f_i\otimes \phi(a_i)$.  The following lemma is immediate:

\begin{lemma}
If $\phi:\boldA_0^\#\to \boldA_1^\#$ is computable, then so is $S\phi:S\boldA_0^\#\to \boldA_1^\#$.  Moreover, an index for $S\phi$ can be computed from indices for $\boldA_0^\#,\boldA_1^\#$, and $\phi$.
\end{lemma}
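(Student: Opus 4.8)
The plan is to verify the definition of a computable map directly for $S\phi$, exploiting that $S\phi$ is nothing but $\mathrm{id}_{C_0(0,1)}\otimes\phi$ and that the presentation $S\boldA_i^\#$ was built in Lemma \ref{inducedsuspension} from the computable presentation $C_0(0,1)^\#$ and $\boldA_i^\#$. By the remark following the definition of a computable map, it suffices to produce an algorithm that, given a rational point $p$ of $S\boldA_0^\#$ and $k\in\N$, outputs a rational point of $S\boldA_1^\#$ within $2^{-k}$ of $S\phi(p)$, uniformly in the relevant indices.

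First I would observe that a rational point $p$ of $S\boldA_0^\#$ can be rewritten effectively as a finite $\Q(i)$-linear combination $p=\sum_{i=1}^N\lambda_i\,(g_i\otimes c_i)$, where each $g_i$ is a rational point of $C_0(0,1)^\#$ and each $c_i$ is a rational point of $\boldA_0^\#$ of the same tensor length. Indeed, $p$ is a $*$-polynomial in the special points $f_m\otimes a_k$, and since $(f\otimes a)(f'\otimes a')=ff'\otimes aa'$ and $(f\otimes a)^*=\bar f\otimes a^*$, expanding the polynomial collapses each $*$-monomial to a single simple tensor $g\otimes c$; this manipulation is purely syntactic, hence effective from a code for $p$. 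Applying the homomorphism $S\phi$ termwise then gives the exact identity
$$S\phi(p)=\sum_{i=1}^N\lambda_i\,(g_i\otimes\phi(c_i)).$$

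Next, using the identification $C_0(0,1)\otimes\boldA_1\cong C_0((0,1),\boldA_1)$, under which $\|g\otimes d\|=\|g\|_\infty\|d\|$ on simple tensors, I would build the required approximant. Computability of $\phi$ yields, for each $i$, a rational point $b_i$ of $\boldA_1^\#$ with $\|\phi(c_i)-b_i\|<\epsilon_i$, and computability of $C_0(0,1)^\#$ yields a rational upper bound $M_i\ge\|g_i\|_\infty$; choosing the tolerances so that $\sum_{i=1}^N|\lambda_i|M_i\epsilon_i<2^{-(k+1)}$, the triangle inequality together with the simple-tensor norm formula gives
$$\Bigl\|S\phi(p)-\sum_{i=1}^N\lambda_i\,(g_i\otimes b_i)\Bigr\|\le\sum_{i=1}^N|\lambda_i|\,\|g_i\|_\infty\,\|\phi(c_i)-b_i\|<2^{-(k+1)}.$$
Since each $g_i\otimes b_i$, and hence $\sum_i\lambda_i(g_i\otimes b_i)$, is a \emph{computable} point of $S\boldA_1^\#$ uniformly in $g_i,b_i$, I may then extract a genuine rational point of $S\boldA_1^\#$ within $2^{-(k+1)}$ of it, for a total error below $2^{-k}$.

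The one point demanding care — and the only real obstacle — is the last claim, that a simple tensor $g\otimes b$ of a rational point $g$ of $C_0(0,1)^\#$ and a rational point $b$ of $\boldA_1^\#$ is a computable point of $S\boldA_1^\#$: because the special points $f_m\otimes a_k$ force matching tensor lengths, one must reconcile the length mismatch between $g$ and a general $*$-polynomial $b$. This is handled by the norm factorization $\|g\otimes d\|=\|g\|_\infty\|d\|$ together with the density of the rational polygonal curves in $C_0(0,1)$: one approximates $g$ by a product of polygonal curves (padding with cutoffs that are identically $1$ on the compact support of $g$) to match the length of each monomial of $b$, realizing $g\otimes b$ as a limit of rational points of $S\boldA_1^\#$. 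As the decomposition of $p$, the bounds $M_i$, and the approximants $b_i$ are all obtained uniformly from a code for $p$, from $k$, and from indices for $\phi$, $\boldA_0^\#$, and $\boldA_1^\#$, the construction yields an index for $S\phi$ computable from indices for $\boldA_0^\#$, $\boldA_1^\#$, and $\phi$, establishing the asserted uniformity.
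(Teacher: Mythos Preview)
Your overall strategy—decomposing a rational point of $S\boldA_0^\#$ into a $\Q(i)$-combination of simple tensors and pushing $\phi$ through termwise—is exactly how one unpacks this lemma, which the paper simply declares ``immediate'' and does not prove.

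However, the ``one point demanding care'' you raise is a non-issue, and your resolution of it is both unnecessary and technically shaky. Reread the construction in Lemma~\ref{inducedsuspension}: the special points of $S\boldA_1^\#$ are $f_m\otimes a_k$ where $f_m$ is the $m$th \emph{rational} point of $C_0(0,1)^\#$ and $a_k$ is the $k$th \emph{rational} point of $\boldA_1^\#$ (not the $m$th and $k$th special points). Hence for \emph{any} rational point $g$ of $C_0(0,1)^\#$ and rational point $b$ of $\boldA_1^\#$, the simple tensor $g\otimes b$ is already a special point of $S\boldA_1^\#$; no length-matching is required. Your approximant $\sum_i\lambda_i(g_i\otimes b_i)$ is therefore literally a rational point of $S\boldA_1^\#$, with an index computable directly from codes for the $g_i$ and $b_i$, so the second $2^{-(k+1)}$ step and the padding argument are superfluous. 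Your padding argument also has a gap as written: a rational polygonal curve $g$ with $g(0)=g(1)=0$ need not have compact support in $(0,1)$, so there is in general no element of $C_0(0,1)$ identically $1$ on its support.

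With this simplification the argument is correct and amounts to the verification the paper leaves implicit.
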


We let $S$ denote the endofunctor on both the category of computable presentations of \cstar algebras and its full subcategory of c.e. presentations, which associates to each c.e. (resp. computable) presentation $\boldA^\#$ the c.e. (resp. computable) presentation $S\boldA^\#$ as in Lemma \ref{inducedsuspension}.  We have just shown the following.

\begin{corollary}
$S$ is a computable functor.
\end{corollary}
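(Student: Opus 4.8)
The plan is to assemble the two immediately preceding lemmas together with the description, recorded after Corollary \ref{cor:D.comp.funct}, of what it means for a functor between categories of presentations to be computable. There, ``computable'' was explained to mean precisely this: there is a single algorithm that, given an index of an object (a presentation $\boldA^\#$), computes an index of its image under the functor, and likewise, given an index of a morphism, computes an index of its image. Since the suspension operation was already noted above to be an honest endofunctor on the category of \cstar-algebras, functoriality at the algebraic level (preservation of identities and composition) is free, and the only content to verify is the effectivity and uniformity of the two assignments.

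First I would dispatch the action on objects. Lemma \ref{inducedsuspension} supplies exactly what is needed: it produces the presentation $S\boldA^\#$ of $S\boldA$, guarantees that $S\boldA^\#$ is c.e. (resp. computable) whenever $\boldA^\#$ is c.e. (resp. computable), and asserts that an index for $S\boldA^\#$ can be computed from an index for $\boldA^\#$. Consequently $S$ maps the c.e. subcategory into itself and the computable subcategory into itself on objects, and does so uniformly in the input index. Next I would dispatch the action on morphisms using the lemma stated just before this corollary, which shows that $S\phi$ is a computable $*$-homomorphism from $S\boldA_0^\#$ to $S\boldA_1^\#$ whenever $\phi$ is a computable $*$-homomorphism from $\boldA_0^\#$ to $\boldA_1^\#$, and that an index for $S\phi$ is computable from indices for $\boldA_0^\#$, $\boldA_1^\#$, and $\phi$.

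Combining these two uniform constructions yields the single algorithm demanded by the definition, which completes the proof. I do not anticipate any genuine obstacle here: the substantive work has already been isolated in the two lemmas, and this corollary merely repackages those uniformities in functorial language. The one point I would flag explicitly, to be safe, is that the uniformities dovetail correctly — namely that the indices of $S\boldA_0^\#$ and $S\boldA_1^\#$ fed into the morphism lemma are themselves obtained from indices of $\boldA_0^\#$ and $\boldA_1^\#$ via Lemma \ref{inducedsuspension}, so that the entire pipeline is driven by indices of the input data alone. This is immediate from the stated input requirements of the morphism lemma, so a single sentence suffices to record it.
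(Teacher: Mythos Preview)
Your proposal is correct and matches the paper's approach exactly: the paper gives no separate proof for this corollary, simply recording it with the phrase ``We have just shown the following,'' since the two preceding lemmas already supply the uniform computability of $S$ on objects and on morphisms. Your write-up is, if anything, more explicit than the paper about how those uniformities assemble into the definition of a computable functor.
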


The reason for introducing the suspension functor is that one may define $K_1(\boldA)$ as $K_1(\boldA)=K_0(S\boldA)$.  Consequently, we may define the functor $K_1=K_0\circ S$.  As a composition of two computable functors is computable, we obtain the following.

\begin{theorem}
The functor $K_1$ is a computable functor from the category of c.e. presentations of \cstar-algebras to the category of c.e. presentations of groups.
\end{theorem}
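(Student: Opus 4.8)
The plan is to exploit the definition $K_1 = K_0 \circ S$ and reduce the theorem to the general principle that a composition of computable functors is again computable. First I would recall that the suspension functor $S$ has just been shown to be a computable functor on the category of c.e. presentations of \cstar-algebras, and that $K_0$ has been established to be a computable functor from the category of c.e. presentations of (not necessarily unital) \cstar-algebras to the category of c.e. presentations of abelian groups. Since the suspension $S\boldA$ is nonunital even when $\boldA$ is unital, it is essential to invoke the nonunital version of the $K_0$ functor rather than the unital one; this is the only place where the distinction matters, and it is precisely why the nonunital $K_0$ functor was developed in the previous section.

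The main step is then to verify that computability of functors is preserved under composition. Given an index for an object $\boldA^\#$ (respectively for a morphism $\phi$), I would first run the algorithm witnessing the computability of $S$ to produce an index for $S\boldA^\#$ (respectively for $S\phi$), and then feed that index into the algorithm witnessing the computability of $K_0$ to produce an index for $K_0(S\boldA^\#) = K_1(\boldA^\#)$ (respectively for $K_0(S\phi) = K_1(\phi)$). The composite of these two underlying algorithms is itself an algorithm, and it transforms an index for an object or morphism into an index for its $K_1$-image, which is exactly the content of $K_1$ being a computable functor. That $K_1(\boldA^\#)$ is a c.e. presentation of $K_1(\boldA)$ follows because $K_0$ outputs c.e. presentations and $S\boldA^\#$ is c.e. by Lemma \ref{inducedsuspension}.

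The hard part of this result is not in the present proof but has already been carried out upstream: the genuine work lies in constructing the computable functor $K_0$ and in verifying (in Lemma \ref{inducedsuspension}) that the induced presentation $S\boldA^\#$ is c.e. with an index computable uniformly from an index for $\boldA^\#$. Once these uniformities are in hand, there is no remaining obstacle, since the indexing data compose transparently; the only point requiring care is to route the nonunital algebra $S\boldA$ through the nonunital $K_0$ functor, as noted above.
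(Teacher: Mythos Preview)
Your proposal is correct and follows exactly the paper's approach: the paper simply observes that $K_1 = K_0 \circ S$ and that a composition of computable functors is computable. Your additional remark that the nonunital version of $K_0$ is the one being invoked (since $S\boldA$ is never unital) is a helpful clarification that the paper leaves implicit.
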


Our definition of $K_1(\boldA)$ is perhaps not the usual one, but rather is equivalent to the usual definition only via a nontrivial theorem 
(see \cite[Theorem 10.1.3]{Rordam.Larsen.Laustsen.2000}).  The usual definition of $K_1(\boldA)$ is as the set of unitaries in matrix 
amplifications of $\boldA$ modulo homotopy equivalence, with the group operation being induced by the operation $\oplus$ introduced above.  It
would be interesting to see if perhaps one can show that this definition of $K_1$ leads to a computable functor which is computably naturally
equivalent to the functor just defined here.

\section{UHF algebras}\label{sec:UHF}

We now restrict our our attention to UHF algebras.  Recall that a unital \cstar-algebra $\boldA$ is called \emph{uniformly hyperfinite} (or \emph{UHF}) if there is a sequence of matrix algebras $(M_{n_k}(\mathbb{C}))_{k \in \mathbb{N}}$ and a sequence of unital embeddings $\iota_{k} : M_{n_k}(\mathbb{C}) \to M_{n_{k+1}}(\mathbb{C})$ so that $\boldA$ is isomorphic to $\displaystyle{\lim_{\rightarrow}(M_{n_k}(\mathbb{C}), \iota_k)}$.  We recall that there is a unital embedding from $M_m(\mathbb{C})$ to $M_n(\mathbb{C})$ if and only if $m$ divides $n$.  If $m$ divides $n$, then all such embeddings are unitarily conjugate to the canonical embedding $\mathcal{E}_{m,n}$ that takes a matrix $A$ in $M_m(\mathbb{C})$ to the block-diagonal matrix whose diagonal consists of $n/m$ copies of $A$.  More formally, 
\[
\embed_{m,n}(A) = \sum_{\ell < n/m} \sum_{r,s \in \{1, \ldots, m\}} a_{r,s} e^{(n)}_{r + \ell m, s + \ell m}.
\]  
As such, we frequently suppress mention of the embeddings and refer only to the sequence of matrix algebras involved in constructing a UHF algebra.  It is well-known that each UHF algebra $\boldA$ has a unique tracial state, which we denote by $\tau_{\boldA}$, given as the direct limit of the unique traces on the various matrix algebras.

It is well-known that UHF algebras are simple.  Consequently, by the result of Fox mentioned in Section \ref{sec:prelim}, any c.e. presentation of a UHF algebra is automatically computable.  In a sequel in preparation, we will show that this statement holds for the wider class of AF algebras (simple or otherwise).

\subsection{$K$-theory and supernatural numbers}
In \cite{Glimm.1960}, Glimm classified UHF algebras up to isomorphism by an invariant known as a \emph{supernatural number}.  This invariant is intimately related to the $K_0$ group of the algebra.  We recall the definitions and basic properties.  Throughout, $\primes$ denotes the set of prime numbers.

\begin{definition}
Let $\displaystyle{\boldA \cong \lim_{\rightarrow}M_{n_k}(\mathbb{C})}$ be a UHF algebra.  The \emph{supernatural number} of $\boldA$ is the function $\epsilon_{\boldA} : \primes \to \{0, 1, \ldots, \infty\}$ defined by 
\[\epsilon_{\boldA}(p) = \sup\{m \in \mathbb{N} : p^m \text{ divides } n_k \text{ for some $k$}\}.\]
\end{definition}

The supernatural number $\epsilon_{\boldA}$ is often expressed as a formal product $\prod_{p \in \primes}p^{n_p}$ where $n_p = \epsilon_{\boldA}(p)$.  However, for our purposes it is usually more convenient to think of it as a function, so that we may apply computability-theoretic terminology to it directly.

More generally, any function $\epsilon:\primes\to \N \cup \{\infty\}$ is referred to as a supernatural number.  It is easy to see that every supernatural number is the supernatural number of a UHF algebra (which is then necessarily unique up to isomorphism). 

Although the definition of $\epsilon_{\boldA}$ appears to depend on the choice of sequence of matrix algebras whose limit is $\boldA$, this is not in fact the case.

\begin{proposition}\label{prop:epsilon.embed}
Let $\boldA$ be a UHF algebra and fix a prime $p \in \primes$.  If $m \in \mathbb{N}$ is such that there is a unital embedding of $M_{p^m}(\mathbb{C})$ into $\boldA$, then $m \leq \epsilon_A(p)$.
\end{proposition}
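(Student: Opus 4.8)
The plan is to reduce the question about an abstract unital embedding into $\boldA$ to a question about unital embeddings between the finite matrix stages $M_{n_k}(\mathbb{C})$, where the divisibility criterion recalled above (a unital embedding $M_a(\mathbb{C}) \to M_b(\mathbb{C})$ exists iff $a \mid b$) applies directly. First I would record the elementary translation that a unital embedding $\phi : M_{p^m}(\mathbb{C}) \to \boldA$ is the same data as a system $(e_{r,s})_{r,s=1}^{p^m}$ of matrix units in $\boldA$ with $\sum_{r=1}^{p^m} e_{r,r} = 1_{\boldA}$: one sets $e_{r,s} = \phi(E^{p^m}_{r,s})$, and conversely any such system determines a unital $*$-homomorphism out of $M_{p^m}(\mathbb{C})$, which is automatically injective by simplicity of the matrix algebra.

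Since $\boldA = \overline{\bigcup_k M_{n_k}(\mathbb{C})}$ with the connecting maps $\iota_k$ unital, the images of the finite stages form an increasing family of unital subalgebras whose union is dense, all sharing the unit $1_{\boldA}$. Because there are only finitely many matrix units $e_{r,s}$, for any $\epsilon > 0$ I can choose a \emph{single} index $k$ and elements $a_{r,s} \in M_{n_k}(\mathbb{C})$ with $\norm{e_{r,s} - a_{r,s}} < \epsilon$ for all $r,s$. These $a_{r,s}$ then satisfy the matrix-unit relations together with the unit relation $\sum_r a_{r,r} = 1_{M_{n_k}(\mathbb{C})}$ up to an error controlled by $\epsilon$, where I use that $1_{M_{n_k}(\mathbb{C})} = 1_{\boldA}$ precisely because the $\iota_k$ are unital.

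The crucial step is to upgrade these approximate relations to exact ones without leaving the stage $M_{n_k}(\mathbb{C})$. For this I invoke the weak stability of the system of relations defining a $p^m \times p^m$ system of matrix units together with $\sum_r x_{r,r} = 1$, which is exactly the ingredient behind Corollary \ref{cor:matrix.units.c.e.clsd.variant} (and established in \cite{FoxGoldbringHart.2024+}); note that only the existence content of weak stability is needed here, not the computable modulus. Weak stability is quantified over \emph{all} \cstar-algebras $\boldB$, so it applies with $\boldB = M_{n_k}(\mathbb{C})$: choosing $\epsilon$ below the stability threshold, there exist exact matrix units $(b_{r,s})_{r,s=1}^{p^m}$ in $M_{n_k}(\mathbb{C})$ with $\sum_r b_{r,r} = 1_{M_{n_k}(\mathbb{C})}$. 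I expect this to be the main obstacle conceptually: a naive functional-calculus correction of the approximate matrix units could in principle push them outside $M_{n_k}(\mathbb{C})$, so it is essential that the stability modulus is uniform across all \cstar-algebras and hence usable inside the finite-dimensional stage itself.

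Finally, the exact system $(b_{r,s})$ defines a unital $*$-homomorphism $M_{p^m}(\mathbb{C}) \to M_{n_k}(\mathbb{C})$, which is injective by simplicity of $M_{p^m}(\mathbb{C})$, i.e.\ a unital embedding. By the divisibility criterion recalled above, this forces $p^m \mid n_k$. Hence, directly from the definition of $\epsilon_{\boldA}$, we obtain $m \leq \epsilon_{\boldA}(p)$, as desired.
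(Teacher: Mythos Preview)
Your proof is correct, but it takes a different route from the paper's own argument. The paper uses the unique tracial state $\tau_{\boldA}$: pulling $\tau_{\boldA}$ back along $\phi$ gives the normalized trace on $M_{p^m}(\mathbb{C})$, so a rank-one projection $q$ has $\tau_{\boldA}(\phi(q)) = p^{-m}$; since any projection in a UHF algebra is unitarily conjugate to one in some matrix stage $M_{n_k}(\mathbb{C})$, one obtains a projection $q' \in M_{n_k}(\mathbb{C})$ of trace $p^{-m}$, forcing $p^m \mid n_k$. Thus the paper uses only the weak stability of a \emph{single} projection, together with the trace to extract the divisibility.

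Your approach instead pushes the \emph{entire} system of matrix units (with the unit relation) into a matrix stage via the weak stability behind Corollary~\ref{cor:matrix.units.c.e.clsd.variant}, and then reads off $p^m \mid n_k$ directly from the existence of a unital embedding $M_{p^m}(\mathbb{C}) \hookrightarrow M_{n_k}(\mathbb{C})$. This is trace-free and stays entirely within the perturbation machinery the paper has already set up, which is tidy; the price is invoking a heavier stability statement (matrix-unit systems rather than a single projection). The paper's trace argument is shorter and perhaps more conceptual, and generalizes readily to other tracial settings, but both are perfectly valid here.
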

\begin{proof}
Since $\boldA$ is UHF, we can express it as $\boldA \cong \lim_{\rightarrow}M_{n_k}(\mathbb{C})$.  Suppose that $\phi : M_{p^m}(\mathbb{C}) \to \boldA$ is a unital embedding and set $\sigma = \tau_{\boldA} \circ \phi$.  Then $\sigma$ is a tracial state on $M_{p^m}(\mathbb{C})$ and hence is the usual normalized trace.

Let $q \in M_{p^m}(\mathbb{C})$ be a rank $1$ projection.   Thus, $\sigma(q) = p^{-m}$.  Since $\phi(q)$ is a projection in $\boldA$, there exists $k$ and a projection $q' \in M_{n_k}$ such that $\phi(q)$ is unitarily conjugate (in $\boldA$) to $q'$.  In particular, $\tau_{\boldA}(q') = \tau_{\boldA}(\phi(q)) = \sigma(q) = p^{-m}$.  Thus the rank of $q'$ is $\tau_{\boldA}(q')n_k = p^{-m}n_k$.  Hence, $p^m$ divides $n_k$ and $m \leq \epsilon_{\boldA}(p)$.
\end{proof}

\begin{corollary}\label{cor:epsilon.sup}
For every UHF algebra $\boldA$ and every $p \in \primes$, we have
\[\epsilon_{\boldA}(p) = \sup\{m \in \mathbb{N} : \text{ there is a unital embedding of $M_{p^m}(\mathbb{C})$ into $\boldA$}\}.\]
\end{corollary}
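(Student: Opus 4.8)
The plan is to prove the two inequalities separately. Write $R$ for the supremum on the right-hand side. The inequality $R \le \epsilon_{\boldA}(p)$ is precisely the content of Proposition \ref{prop:epsilon.embed}: whenever there is a unital embedding of $M_{p^m}(\mathbb{C})$ into $\boldA$, that proposition gives $m \le \epsilon_{\boldA}(p)$, so every element of the set defining $R$ is bounded by $\epsilon_{\boldA}(p)$, and hence so is $R$.

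For the reverse inequality $\epsilon_{\boldA}(p) \le R$, I would exhibit, for each finite $m \le \epsilon_{\boldA}(p)$, an honest unital embedding of $M_{p^m}(\mathbb{C})$ into $\boldA$. Fixing a presentation $\boldA \cong \lim_{\rightarrow}(M_{n_k}(\mathbb{C}), \iota_k)$, the definition of $\epsilon_{\boldA}(p)$ as a supremum guarantees that for such an $m$ there is some $k$ with $p^m \mid n_k$. Since $p^m$ divides $n_k$, the canonical embedding $\embed_{p^m, n_k} : M_{p^m}(\mathbb{C}) \to M_{n_k}(\mathbb{C})$ is a unital $*$-homomorphism, and composing it with the structural map $\mu_k : M_{n_k}(\mathbb{C}) \to \boldA$ of the direct limit produces the required map. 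Taking the supremum over all finite $m \le \epsilon_{\boldA}(p)$ then yields $\epsilon_{\boldA}(p) \le R$, and this argument is insensitive to whether $\epsilon_{\boldA}(p)$ is finite or $\infty$ (in the infinite case one obtains an embedding for every $m$, forcing $R = \infty$).

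There is no substantive obstacle here; the genuine work lies in the already-established Proposition \ref{prop:epsilon.embed}, and the forward direction is routine. The only point meriting a word of justification is that the composite $\mu_k \circ \embed_{p^m, n_k}$ is a \emph{unital embedding}: unitality is immediate because the connecting maps $\iota_k$ are unital, so each $\mu_k$ is unital and the composite of unital maps is unital; injectivity follows because $M_{p^m}(\mathbb{C})$ is simple, so the nonzero $*$-homomorphism $\mu_k \circ \embed_{p^m, n_k}$ has trivial kernel. Combining the two inequalities completes the proof.
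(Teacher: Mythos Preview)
Your argument is correct and is exactly the intended justification: the paper states this result as an immediate corollary of Proposition~\ref{prop:epsilon.embed} without giving a separate proof, and the two inequalities you spell out (one from the proposition, the other by composing $\embed_{p^m,n_k}$ with the structural map into the direct limit) are precisely what makes it immediate.
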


The supernatural number can be viewed as an encoding of the ordered group $(K_0(\boldA), K_0(\boldA)^+, [\unit_{\boldA}]_{0})$, as we now recall.  For proofs of these claims, see \cite[Lemma 7.4.4.ii and Theorem 7.4.5.i]{Rordam.Larsen.Laustsen.2000}. 

Let $\boldA$ be a UHF algebra with supernatural number $\epsilon_{\boldA}$.  Let $\mathbb{Q}(\epsilon_{\boldA})$ be the subgroup of $(\mathbb{Q}, +)$ generated by numbers of the form $p^{-m}$, where $p$ is prime and $m \leq \epsilon_{\boldA}(p)$.   Then $K_0(\boldA) \cong \mathbb{Q}(\epsilon_{\boldA})$ via an isomorphism that sends the $K_0$-class of $\unit_{\boldA}$ to the rational number $1$ and sends $K_0(\boldA)^+$ to $\mathbb{Q}(\epsilon_{\boldA})^+$.  

On the other hand, every UHF algebra $\boldA$ has $K_1(\boldA) = 0$ (see \cite[Example 8.1.8 and Proposition 8.2.7]{Rordam.Larsen.Laustsen.2000}).  Thus supernatural numbers form a complete description of $K$-theory for UHF algebras.

\subsection{Computable UHF certificates}

Many different sequences of matrix algebras (and connecting maps) give rise to the same UHF algebra.  For computability purposes, it will be useful for us to have access not just to a UHF algebra, but also to data on how to construct the UHF algebra as a direct limit of matrix algebras. 
To this end, we introduce the following definition. 

\begin{definition}\label{def:UHF.cert}
A \emph{UHF certificate} for a \cstar-algebra $\boldA$ is a sequence $(n_j, \psi_j)_{j \in \N}$ that satisfies the following 
conditions:
\begin{enumerate}
    \item $(n_j)_{j \in \N}$ is a sequence of positive integers. 

    \item $\psi_j:M_{n_j}(\C)\to \boldA$ is a unital embedding.

    \item $\ran(\psi_j) \subseteq \ran(\psi_{j+1})$.

    \item $\boldA = \overline{\bigcup_{j \in \N} \ran(\psi_j)}$.
\end{enumerate}

The UHF certificate is a \emph{computable UHF certificate} of the presentation $\boldA^\#$ if the sequence $(n_j)_{j \in \mathbb{N}}$ is computable and the maps $\psi_j$ are computable maps of $M_{n_j}(\C)$ (with its standard presentation) into $\boldA^\#$, uniformly in $j$.
\end{definition}

It is clear that a \cstar-algebra $\boldA$ is UHF if and only if it has a UHF certificate.  Our next goal is to show that if $\boldA^\#$ is a computable presentation of a UHF algebra $\boldA$, then $\boldA^\#$ has a computable UHF certificate.  Note that the identity matrix of $M_n(\mathbb{C})$ is a rational point of the standard presentation of $M_n(\mathbb{C})$, and hence if $\boldA^\#$ has a computable UHF certificate, then $\unit_\boldA$ must be a computable point of $\boldA^\#$.  In fact, our construction of a computable UHF certificate for $\boldA^\#$ will use $\unit_\boldA$ as its starting point, so we first prove that $\unit_\boldA$ is a computable point of every computable presentation of a UHF algebra.  This result actually holds in greater generality.

\begin{proposition}\label{prop:UHF.unit}
If $\boldA$ is stably finite and $\boldA^\#$ is computable, then 
$\unit_A$ is a computable point of $\boldA^\#$.
\end{proposition}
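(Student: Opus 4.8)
The plan is to reduce the statement to showing that the singleton $\{\unit_\boldA\}$ is a c.e.-closed subset of $\boldA^\#$, and then to extract the computable point as in Lemma \ref{intersection}. Indeed, once we can enumerate the rational open balls that contain $\unit_\boldA$, we obtain a $\boldA^\#$-index for $\unit_\boldA$ by searching, for each $k$, for such a ball of radius less than $2^{-k}$; density of the rational points guarantees that arbitrarily small such balls appear in the enumeration, and their centers are rational points within $2^{-k}$ of $\unit_\boldA$. This reduction is uniform, so it suffices to produce a c.e. list of rational balls meeting $\{\unit_\boldA\}$.

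The geometric input is the following rigidity coming from stable finiteness: among all projections of $\boldA$, the unit is isolated, and in fact every projection $p \neq \unit_\boldA$ satisfies $\norm{p - \unit_\boldA} = 1$. To see this, recall from \cite{Rordam.Larsen.Laustsen.2000} that two projections at distance less than $1$ are Murray--von Neumann equivalent; since $\boldA$ is finite, any projection equivalent to $\unit_\boldA$ must equal $\unit_\boldA$ (otherwise $\unit_\boldA$ would be equivalent to a proper subprojection, contradicting finiteness), and for $p \neq \unit_\boldA$ the element $\unit_\boldA - p$ is a nonzero projection, forcing $\norm{p - \unit_\boldA} = 1$. Combined with Corollary \ref{prop:c.e.clsd}, which gives that the set of projections is c.e.-closed, this says that $\unit_\boldA$ is the unique point of $P_1(\boldA)$ lying in $B(\unit_\boldA; 1)$, and that it is the Murray--von Neumann maximal projection of $P_1(\boldA)$.

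It remains to enumerate balls containing $\unit_\boldA$, and this is where the main obstacle lies. Using Proposition \ref{prop:seq.proj} with $n = 1$ we obtain a computable sequence of projections hitting every Murray--von Neumann class, so some term is actually equal to $\unit_\boldA$; using the c.e. comparison relation of Corollary \ref{cor:mvn.pred} we can, for any two such projections, confirm Murray--von Neumann (sub)equivalence. The difficulty is that the defining feature of $\unit_\boldA$ --- that it is a two-sided identity, equivalently that it dominates every projection --- is a priori only a co-c.e.\ (universally quantified) condition, so none of the generic tools let us positively \emph{confirm} that a given projection is the unit; this is exactly what prevents the naive search from terminating, and I expect it to be the crux of the proof.

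The way I would resolve this is to exploit stable finiteness to convert an approximate, finitely checkable condition into exact equality. Concretely, one corrects a rational approximate projection to a genuine one using the computable weak stability of the projection relations (Theorem \ref{thm:cws.ce.closed}), and then certifies \emph{fullness} through the comparison theory rather than through the identity condition directly. In the UHF case of immediate interest this can be carried out cleanly via the unique tracial state $\tau_{\boldA}$, since $\unit_\boldA$ is the unique projection $p$ with $\tau_{\boldA}(p) = 1$ and $\tau_{\boldA}$ is amenable to effective approximation on projections; once a candidate projection is certified to be $\unit_\boldA$, the radius-$1$ isolation established above upgrades any sufficiently tight rational approximation to the conclusion that $\unit_\boldA$ lies in the corresponding ball, completing the enumeration.
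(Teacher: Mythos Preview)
You have the right core idea, and it is essentially the paper's idea too: produce a \emph{computable} projection $p$ of $\boldA$ with $p \mvn \unit_\boldA$, and then invoke stable finiteness to conclude $p = \unit_\boldA$. In fact your route via Proposition~\ref{prop:seq.proj} is a bit cleaner than the paper's route through the presentation $\mathcal{D}(\boldA^\#)$: you already note that ``some term is actually equal to $\unit_\boldA$'' in the computable sequence $(p_k)_{k \in \N}$. That sentence \emph{is} the proof. Each $p_k$ is a computable point of $\boldA^\#$ (with an index computable from $k$), so if $p_{k_0} = \unit_\boldA$ for some fixed $k_0$, then $\unit_\boldA$ is a computable point. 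You are done at that line.

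The remainder of the proposal is based on a misreading of what has to be shown. The proposition is an \emph{existence} statement: it asserts that $\unit_\boldA$ has some $\boldA^\#$-index, not that such an index can be computed from an index for $\boldA^\#$. (The paper explicitly remarks after the proof that the result is non-uniform, and that the uniform version is in fact false.) Your ``main obstacle''---effectively identifying \emph{which} $k$ satisfies $p_k = \unit_\boldA$---is an obstacle only to uniformity, and your plan to first show that $\{\unit_\boldA\}$ is c.e.-closed and then extract a point is backwards: enumerating the rational balls containing $\unit_\boldA$ already presupposes a handle on $\unit_\boldA$ of exactly the kind you are trying to obtain.

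Finally, the proposed resolution in the UHF case via $\tau_{\boldA}$ would be circular in the paper's development: computability of $\tau_{\boldA}$ (Theorem~\ref{thm:comp.trace}) rests on the existence of a computable UHF certificate (Theorem~\ref{thm:comp.UHF}), whose Stage~0 begins with $g^{(0,0)}_{1,1} = \unit_\boldA$ and thus already uses that $\unit_\boldA$ is a computable point. So that detour should be dropped; the general stably finite argument you sketched (stopping at ``some term is actually equal to $\unit_\boldA$'') already covers UHF as a special case.
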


\begin{proof}
Since $\mathcal{D}(\boldA^\#)$ is a presentation of $\mathcal{D}(\boldA)$, there is a 
$\mathcal{D}(\boldA^\#)$-label $w_0\in D_\omega$ for $[\unit_\boldA]_{\mvn}$. 
Since $\boldA^\#$ computably supports $\mathcal{D}(\boldA^\#)$, from $w_0$ it is possible to compute $n$ and an $M_n(\boldA^\#)$-index 
of a projection $p \in P_n(\boldA)$ so that $p \mvn \unit_\boldA$.

Next note that there is a computable projection $p'$ of $\boldA^\#$ so that $p' \mvn p$.  Indeed, by Theorem \ref{thm:mvn.ce}, there is a pair $(B_0, B_1)$ of rational open balls 
of $M_n(\boldA^\#)$ so that $\unit_\boldA \oplus \zerom_{n-1} \in B_0$, $p \in B_1$, and $q_0 \mvn q_1$ 
whenever $(q_0,q_1) \in (B_0 \times B_1) \cap P_n(\boldA)^2$.  Take a rational open ball 
$B_2$ of $\boldA^\#$ so that $\unit_\boldA \in B_2$ and 
$q\oplus \zerom_{n-1} \in B_0$ whenever $q \in B_2$.  Since $P_1(\boldA)$ is a c.e. closed set of 
$\boldA^\#$, and since $B_2 \cap P_1(\boldA) \neq \emptyset$, Lemma \ref{intersection} implies that $B_2 \cap P_1(\boldA)$ contains 
a computable point $p'$ of $\boldA^\#$.  Thus, $p' \mvn p' \oplus \zerom_{n-1} \mvn p$.

Since $\boldA$ is stably finite and $p'\mvn p\mvn 1_\boldA$, we have that $p'=1_\boldA$,as desired.
\end{proof}

Note that the proof of the previous proposition is \emph{not} uniform as it requires one to fix a label for the Murray-von Neumann equivalence class of the identity.  In fact, the uniform version of the statement is not true, as was shown by the third author in \cite[Theorem 5.3]{mcnicholl2024evaluative}.

We next show how to extend a matrix algebra in $\boldA$ to a larger one.  We begin by recalling a result of Glimm (see \cite[Lemma III.3.2]{Davidson.1996}), which we immediately strengthen in the lemma that follows it.

\begin{fact}\label{davidson}
For all $\epsilon>0$ and $n\in \mathbb{N}$, there is a positive real number 
$\delta=\delta(\epsilon,n)$ such that, whenever $\boldA$ and $\boldB$ are \cstar-subalgebras of a 
unital \cstar-algebra $\mathbf{D}$ with $\dim(A)\leq n$ and such that $\boldA$ has a system of matrix 
units $(e_{r,s})_{r,s}$ satisfying $d(e_{r,s},\boldB)<\delta$, then there is a unitary
$U\in C^*(\boldA,\boldB)$ with $\|U-1_{\mathbf{D}}\|<\epsilon$ for which $U\boldA U^*\subseteq \boldB$.
\end{fact}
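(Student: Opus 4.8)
The plan is to follow the classical Glimm argument: first produce an \emph{exact} copy of the matrix units of $\boldA$ sitting inside $\boldB$ and norm-close to $(e_{r,s})_{r,s}$, and then manufacture a unitary close to $1_{\mathbf{D}}$ that carries the given matrix units onto the exact ones. Throughout I would fix $\epsilon$ and $n$ at the outset and only pin down $\delta=\delta(\epsilon,n)$ at the very end, keeping track of how each estimate depends on the number of matrix units (which is at most $\dim(\boldA)\leq n$). For readability I would treat the single-block case $\boldA\cong M_k(\C)$ with $\sum_r e_{r,r}=1_\boldA=1_{\mathbf{D}}$; the general finite-dimensional case is identical after summing the constructions below over the blocks of $\boldA$.

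The first step is to pass from the near-membership hypothesis to genuine matrix units in $\boldB$. For each $e_{r,s}$ choose $b_{r,s}\in\boldB$ with $\norm{e_{r,s}-b_{r,s}}<\delta$; the finitely many products and adjoints defining a system of matrix units are then satisfied by the $b_{r,s}$ up to an error controlled by $\delta$. Invoking the weak stability of the matrix-unit relations (the perturbation phenomenon underlying Corollary \ref{cor:matrix.units.c.e.clsd}), there is a modulus depending only on $n$ producing an \emph{exact} system of matrix units $(f_{r,s})_{r,s}$ \emph{inside} $\boldB$ with $\norm{e_{r,s}-f_{r,s}}<\delta'$, where $\delta'\to 0$ as $\delta\to 0$ for fixed $n$. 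The crucial feature is that this step happens entirely within $\boldB$, so it costs nothing outside it.

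With both systems in hand, I would form the intertwiner $a=\sum_{r} f_{r,1}e_{1,r}\in C^*(\boldA,\boldB)$. A direct computation from the exact matrix-unit relations gives $a\,e_{r,s}=f_{r,s}\,a$ for all $r,s$, and taking adjoints yields $e_{r,s}\,a^*=a^*\,f_{r,s}$; combining these shows that $a^*a$ commutes with $\boldA$. Since $a-1_{\mathbf{D}}=\sum_r(f_{r,1}-e_{r,1})e_{1,r}$ is a sum of $k\leq n$ terms each of norm $O(\delta')$, we get $\norm{a-1_{\mathbf{D}}}=O(n\delta')$, so for $\delta$ small enough $a$ is invertible with inverse close to $1_{\mathbf{D}}$. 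The polar part $U=a(a^*a)^{-1/2}$ is then a unitary in $C^*(\boldA,\boldB)$, and because $(a^*a)^{-1/2}$ commutes with $\boldA$ the intertwining relations give $U e_{r,s}U^*=a(a^*a)^{-1}e_{r,s}a^*=a(a^*a)^{-1}a^* f_{r,s}=f_{r,s}$ for every $r,s$, whence $U\boldA U^*=C^*((f_{r,s})_{r,s})\subseteq\boldB$. Standard functional-calculus estimates give $\norm{U-1_{\mathbf{D}}}=O(\norm{a-1_{\mathbf{D}}})$, so choosing $\delta$ small enough that this is below $\epsilon$ finishes the argument. (If $\boldA$ is not assumed to contain $1_{\mathbf{D}}$, then $a$ is close to $1_\boldA$ rather than invertible, its polar part is a partial isometry with initial and range projections both within $O(\delta')$ of $1_\boldA$, and one extends it to a unitary close to $1_{\mathbf{D}}$ using that these two projections are norm-close, hence equivalent.)

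The main obstacle is the perturbation in the second paragraph together with the bookkeeping that forces $\delta$ to depend on $n$: one must convert the approximate matrix units into exact ones inside $\boldB$ with a modulus \emph{uniform} over all $\boldA$ of dimension at most $n$, and one must control the accumulation of the $O(\delta')$ errors across the up-to-$n$ matrix units so that $\norm{U-1_{\mathbf{D}}}$ can be driven below $\epsilon$. Once exact matrix units are available in $\boldB$, the intertwiner-plus-polar-decomposition core is clean and delivers the conjugating unitary essentially for free.
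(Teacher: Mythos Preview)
The paper does not prove this statement at all: it is recorded as a \emph{Fact} and attributed to Glimm with a citation to Davidson's book, so there is no ``paper's own proof'' to compare against. Your sketch is precisely the classical Glimm argument one finds in that reference: perturb the approximate matrix units inside $\boldB$ to exact ones $(f_{r,s})$ using weak stability of the matrix-unit relations, form the intertwiner $a=\sum_r f_{r,1}e_{1,r}$, check $a\,e_{r,s}=f_{r,s}\,a$ so that $a^*a$ commutes with $\boldA$, and take the polar part $U=a(a^*a)^{-1/2}$. The computations you outline are correct; in particular $Ue_{r,s}U^*=f_{r,s}$ and $\norm{a-1_{\boldA}}\leq k\delta'$ follow exactly as you say.

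The only place your write-up is thin is the parenthetical handling of the case $1_{\boldA}\neq 1_{\mathbf{D}}$. There the polar part of $a$ is a partial isometry $v$ with $v^*v=1_{\boldA}$ and $vv^*=\sum_r f_{r,r}$; one then needs to explicitly extend $v$ to a unitary in $C^*(\boldA,\boldB,1_{\mathbf{D}})$ using the standard ``close projections are unitarily equivalent by a unitary close to $1$'' lemma applied to $1_{\boldA}$ and $\sum_r f_{r,r}$. This is routine, but in a polished version you should spell out the extension (e.g., via $U=v+w(1_{\mathbf{D}}-1_{\boldA})$ for an appropriate $w$) and verify that the resulting $U$ is unitary and still satisfies $\norm{U-1_{\mathbf{D}}}<\epsilon$. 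Apart from that bookkeeping, your argument is the standard one and is sound.
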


\begin{lemma}\label{lm:UHF.ext}
Suppose that $\boldA$ is a UHF algebra, that $\bold M \subseteq \boldA$ is a finite dimensional 
subalgebra, and that $v_1, \ldots, v_k \in \boldA$.  Then for every $\epsilon > 0$, there is a 
\cstar-subalgebra $\boldB$ of $\boldA$ containing $\bold M$ such that $\boldB \cong 
M_n(\mathbb{C})$ for some $n$ and for which $d(v_i, \boldB) < \epsilon$ for all $i=1,\ldots,k$.
\end{lemma}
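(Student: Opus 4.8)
The plan is to realize $\boldA$ as the closure of an increasing union of full matrix subalgebras and then apply Glimm's Lemma (Fact \ref{davidson}) to conjugate one of these matrix algebras by a small unitary so that it engulfs $\bold M$ while still approximating the $v_i$. Concretely, since $\boldA$ is UHF, I would first fix nested unital subalgebras $\boldB_1 \subseteq \boldB_2 \subseteq \cdots$ of $\boldA$ with each $\boldB_j \cong M_{m_j}(\C)$ and $\boldA = \overline{\bigcup_j \boldB_j}$, coming directly from the defining direct limit. I would also fix a system of matrix units $(e_{r,s})_{r,s}$ spanning the finite-dimensional algebra $\bold M$ (which exists as $\bold M$ is a finite direct sum of full matrix algebras), and set $n = \dim(\bold M)$.

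The delicate point is the order in which the constants are chosen, so I would fix everything except $j$ first. Let $M_0 = \max_i \norm{v_i} + \epsilon$, and pick $\epsilon' > 0$ small enough that $2\epsilon' M_0 < \epsilon/2$. Applying Fact \ref{davidson} to this $\epsilon'$ and the dimension bound $n$ produces a threshold $\delta = \delta(\epsilon', n) > 0$. Only now would I invoke density of $\bigcup_j \boldB_j$: choose a \emph{single} index $j$ large enough that simultaneously $d(v_i, \boldB_j) < \epsilon/2$ for all $i$ and $d(e_{r,s}, \boldB_j) < \delta$ for every matrix unit of $\bold M$.

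With this $j$ in hand, Glimm's Lemma supplies a unitary $U \in C^*(\bold M, \boldB_j) \subseteq \boldA$ with $\norm{U - \unit_\boldA} < \epsilon'$ and $U \bold M U^* \subseteq \boldB_j$. I would then set $\boldB = U^* \boldB_j U$. Conjugation by a unitary is a $*$-isomorphism, so $\boldB \cong M_{m_j}(\C)$, and from $U \bold M U^* \subseteq \boldB_j$ we obtain $\bold M \subseteq U^* \boldB_j U = \boldB$, which is the required containment (and $\boldB \subseteq \boldA$ since $U \in \boldA$).

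Finally, for the approximation I would fix $w_i \in \boldB_j$ with $\norm{v_i - w_i} < \epsilon/2$, so that $\norm{w_i} \le M_0$, and observe that $U^* w_i U \in \boldB$. Writing $w_i - U^* w_i U = (\unit_\boldA - U^*) w_i + U^* w_i(\unit_\boldA - U)$ and using $\norm{\unit_\boldA - U} = \norm{\unit_\boldA - U^*} < \epsilon'$ gives $\norm{w_i - U^* w_i U} \le 2\epsilon' \norm{w_i} \le 2\epsilon' M_0 < \epsilon/2$, whence $d(v_i, \boldB) \le \norm{v_i - w_i} + \norm{w_i - U^* w_i U} < \epsilon$. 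The main obstacle is purely one of bookkeeping: choosing $\epsilon'$ (and hence $\delta$) before $j$, and bounding $\norm{w_i}$ uniformly in $j$ so that the commutator-type estimate closes. Once Glimm's Lemma is in place the remaining steps are routine.
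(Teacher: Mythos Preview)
Your argument is correct and follows essentially the same route as the paper: pick $\epsilon'$ and the Glimm threshold $\delta$ first, find a full matrix subalgebra close to both the matrix units of $\bold M$ and the $v_i$, conjugate by the Glimm unitary, and then do a triangle-inequality estimate. The only cosmetic difference is that the paper estimates $\norm{v_i - U^* v_i U}$ directly (and then compares $U^* v_i U$ to $U^* b U$ for $b \in \boldB_j$), whereas you pass through the intermediate point $w_i \in \boldB_j$ and estimate $\norm{w_i - U^* w_i U}$; both versions close the same way.
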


\begin{proof}  Let $\epsilon'$ be small enough so that for $(\epsilon'^2 + 2\epsilon')(\norm{v_i}) < \epsilon / 2$ for all $i=1,\ldots,k$.
Set $\delta = \delta(\epsilon', \dim(M))$ as in Fact \ref{davidson}, and let $\delta' = \min\{\delta, \epsilon', \epsilon/2\}$.  Let $(e_{r, s})_{r, s}$ be a system of matrix units for $\bold M$.  Since $\boldA$ is UHF, there is an algebra $\bold B' \subseteq \boldA$ such that $\bold B' \cong M_n(\C)$ for some $n$, 
$d(e_{r, s}, \bold B') < \delta'$ for all $r,s$, and $d(v_i, B') < \delta'$ for all $i=1,\ldots,k$.  By the choice of $\delta'$, there is a unitary $U$ in $C^*(\bold M, \bold B')$ such that $\norm{U-1_{\boldA}} < \epsilon'$ and $U\bold M U^* \subseteq \bold B'$.  Set $\boldB = U^*\bold B'U$.  Then 
$\bold M \subseteq \boldB$ and $\boldB \cong \boldB' \cong M_n(\mathbb{C})$.

For any $v\in \boldA$, we have
\begin{align*}
\norm{v-U^*vU} &= \norm{(U-1_{\boldA})^*v(U-1_{\boldA})+(U-1_{\boldA})^*v + v(U-1_{\boldA})} \\
&\leq (\norm{U-1_{\boldA}}^2+2\norm{U-1_{\boldA}})\norm{v} \\ &\leq (\epsilon'^2+2\epsilon')\norm{v}.
\end{align*}
In particular, for each $i=1,\ldots,k$, the choice of $\epsilon'$ gives $\norm{v_i-U^*v_iU} < \epsilon/2$.  Now for each $i=1,\ldots,k$, and each $b \in \bold B'$, we have
\begin{align*}
\norm{v_i-U^*bU} &= \norm{v_i-U^*(b-v_i)U - U^*v_iU} \\
 &\leq \norm{v_i - U^*v_iU} + \norm{b-v_i} \\
 &\leq \epsilon/2 + d(v_i, \bold B') \\
 &< \epsilon.
\end{align*}
Thus $d(v_i, \bold B) < \epsilon$ for all $i=1,\ldots,k$ as required.
\end{proof}

Our next step is a computable version of the previous lemma.

\begin{lemma}\label{lm:UHF.comp.ext}
Suppose $\boldA$ is UHF and $\boldA^\#$ is computable.  Assume $\phi$ is a unital embedding of 
$M_n(\C)$ into $\boldA$ and is a computable map from $M_n(\C)$ into $\boldA^\#$.
Then for every $k \in \N$ and every finite set $F$ of rational vectors of $\boldA^\#$, there exists
$n' > n$ with $n\mid n'$ and a unital embedding $\psi$ of $M_{n'}(\C)$ into $\boldA$ that satisfy the following conditions.
\begin{enumerate}
    \item $\psi$ is a computable map from $M_{n'}(\C)$ to $\boldA^\#$. \label{lm:UHF.comp.ext::comp}

    \item For all $\rho \in F$, $d(\rho, \ran(\psi)) < 2^{-k}$. \label{lm:UHF.comp.ext::dist}

    \item For all $r,s \in \{1, \ldots, n\}$,  \label{lm:UHF.comp.ext::units}
\[
\norm{ \phi(e_{r,s}^{(n)}) - \sum_{j < n'/n} \psi(e^{(n')}_{r + jn, s + jn})} < 2^{-k}.
\] 
\end{enumerate}
\end{lemma}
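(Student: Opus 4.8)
The plan is to realize $\psi$ through the system of matrix units $a_{r,s} := \psi(e^{(n')}_{r,s})$ that it determines, and to obtain these $a_{r,s}$ as computable points of $\boldA^\#$ by an effective search. The first observation is that condition (\ref{lm:UHF.comp.ext::comp}) comes for free: once we have computed $\boldA^\#$-indices for a genuine system of $n'\times n'$ matrix units $(a_{r,s})_{r,s}$ with $\sum_{r=1}^{n'}a_{r,r}=1_\boldA$, the rule $e^{(n')}_{r,s}\mapsto a_{r,s}$ extends linearly to a unital $*$-homomorphism $\psi:M_{n'}(\C)\to\boldA$, which is injective by simplicity of $M_{n'}(\C)$, and which is a computable map from $M_{n'}(\C)$ to $\boldA^\#$ since $\psi\!\left(\sum_{r,s}c_{r,s}e^{(n')}_{r,s}\right)=\sum_{r,s}c_{r,s}a_{r,s}$ has a $\boldA^\#$-index computable uniformly from the rational coefficients $c_{r,s}\in\Q(i)$. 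Thus everything reduces to computably locating matrix units satisfying conditions (\ref{lm:UHF.comp.ext::dist}) and (\ref{lm:UHF.comp.ext::units}).

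For each candidate multiple $n'$ of $n$ with $n'>n$, I would work inside the c.e. presentation $(\boldA^\#)^{(n')^2}$ and set up two sets. Let $C_{n'}$ be the set of $n'\times n'$ systems of matrix units for $\boldA$ with $\sum_{r=1}^{n'}a_{r,r}=1_\boldA$; this is c.e.-closed by Corollary \ref{cor:matrix.units.c.e.clsd.variant}. Let $U_{n'}$ be the set of tuples $(a_{r,s})_{r,s}$ satisfying the strict inequalities
\[
\norm{\phi(e^{(n)}_{r,s})-\sum_{j<n'/n}a_{r+jn,\,s+jn}}<2^{-k}\qquad(1\le r,s\le n)
\]
together with, for each $\rho\in F$, the existence of coefficients $c_{r,s}\in\Q(i)$ with $\norm{\rho-\sum_{r,s}c_{r,s}a_{r,s}}<2^{-k}$. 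Each of these is an open condition, and because $\phi$ is computable the points $\phi(e^{(n)}_{r,s})$ are computable points of $\boldA^\#$; hence the conditions are witnessed on rational open balls whose defining data can be checked effectively, so $U_{n'}$ is c.e.-open with an index computable from $n'$, $k$, $F$, and an index for $\phi$. Note that membership in $U_{n'}$ forces conditions (\ref{lm:UHF.comp.ext::dist}) and (\ref{lm:UHF.comp.ext::units}), since $\ran(\psi)=\operatorname{span}\{a_{r,s}\}$.

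The algorithm then dovetails over all such $n'$, running for each the search of Lemma \ref{intersection} for a computable point of $U_{n'}\cap C_{n'}$; the first search to halt returns $n'$ and computable $\boldA^\#$-indices for matrix units in $C_{n'}\cap U_{n'}$, from which $\psi$ is read off as above. What remains, and what I expect to be the only non-routine part, is to guarantee that some $U_{n'}\cap C_{n'}$ is nonempty. For this I would appeal to the non-effective Lemma \ref{lm:UHF.ext}, applied to the finite-dimensional subalgebra $\phi(M_n(\C))$, the vectors in $F$, and some $\epsilon<2^{-k}$: it yields a subalgebra $\boldB\cong M_{n'}(\C)$ containing $\phi(M_n(\C))$, with $d(\rho,\boldB)<2^{-k}$ for all $\rho\in F$; enlarging $\boldB$ if necessary we may take $n'>n$ (possible since $\boldA$ is infinite dimensional), and $n\mid n'$ holds automatically as $\phi(M_n(\C))$ is a unital subalgebra of $\boldB$. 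Since every unital embedding of $M_n(\C)$ into $M_{n'}(\C)$ is unitarily conjugate to $\embed_{n,n'}$, I can absorb the conjugating unitary into a choice of unital isomorphism $\psi_0:M_{n'}(\C)\to\boldB$ so that $\psi_0\circ\embed_{n,n'}=\phi$. Then $a_{r,s}:=\psi_0(e^{(n')}_{r,s})$ lies in $C_{n'}$, makes the left-hand side of condition (\ref{lm:UHF.comp.ext::units}) equal to $0$, and satisfies $d(\rho,\operatorname{span}\{a_{r,s}\})=d(\rho,\boldB)<2^{-k}$, so it lies in $U_{n'}$. This secures nonemptiness, the search terminates, and the computability of $\psi$ is then immediate from Lemma \ref{intersection}.
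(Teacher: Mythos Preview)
Your argument is correct and essentially identical to the paper's: both set up, for each candidate $n'=mn$, a c.e.-closed set of unital systems of matrix units and a c.e.-open set encoding conditions (\ref{lm:UHF.comp.ext::dist}) and (\ref{lm:UHF.comp.ext::units}), invoke Lemma~\ref{lm:UHF.ext} together with the unitary-conjugacy of unital embeddings $M_n(\C)\hookrightarrow M_{n'}(\C)$ to witness nonemptiness of the intersection, and then apply Lemma~\ref{intersection} to extract a computable point. The only cosmetic differences are that the paper splits your $U_{n'}$ into two pieces $U^1_m$, $U^2_m$ and phrases condition (\ref{lm:UHF.comp.ext::dist}) via the generated subalgebra rather than rational linear combinations.
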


\begin{proof}
Set $\boldA_0 = \ran(\phi)$ and fix a positive integer $m$.  Let $C_m$ denote
the set of all $(mn) \times (mn)$ systems of matrix units in $M_{mn}(\boldA)$.  By Corollary \ref{cor:matrix.units.c.e.clsd.variant}, $C_m$ is a c.e. closed subset of $M_{mn}(\boldA)^\#$ uniformly in 
$m$.  Let $U^1_m$ denote the set of all 
$A \in M_{mn}(\boldA)$ so that
$d(\rho, \langle a_{r,s} \rangle_{r,s}) < 2^{-k}$ for each $\rho \in F$, where $\langle a_{r,s} \rangle_{r,s}$ denotes the subalgebra of $\boldA$ generated by the $a_{r,s}$'s.  
Let $U^2_m$ denote the set of all $A \in M_{mn}(\boldA)$ so that, 
for all $r,s \in \{1, \ldots, n\}$, one has
\[
\norm{\phi(e^{(n)}_{r,s}) - \sum_{j < m} a_{r + jn, s + jn}} < 2^{-k}.
\]

Note that both $U^1_m$ and $U^2_m$ are c.e. open sets of $M_{mn}(\boldA)^\#$ uniformly in $m$, whence so is 
$U^1_m \cap U^2_m$.

We claim that there is a positive integer $m$ so that $C_m \cap U^1_m \cap U^2_m \neq \emptyset$.  To see this, note that, by Lemma \ref{lm:UHF.ext}, there is an $n' \in \N$ and a \cstar-algebra 
$\boldB\cong M_{n'}(\C)$ so that $\boldA_0 \subseteq \boldB \subseteq \boldA$ and so that 
$d(\rho, \boldB) < 2^{-k}$ for all $\rho \in F$.  Let $\psi_0$ be a $*$-isomorphism 
of $M_{n'}(\C)$ onto $\boldB$.  Thus, $\psi_0^{-1}\circ \phi:M_n(\C)\to M_{n'}(\C)$ is a unital embedding.  Consequently, there is a unitary $U \in M_{n'}(\C)$ so that 
$\psi_0^{-1}(\phi(A)) = U\embed_{n,n'}(A)U^*$ for all $A \in M_n(\C)$. Now define $\psi_1:M_{n'}(\C)\to \boldA$ by $\psi_1(A)=\psi_0(UAU^*)$.  Set $a_{r,s} = \psi_1(e^{(n')}_{r,s})$, and let $A$ be the matrix with entries $a_{r,s}$.  It suffices to show $A \in C_m \cap U^1_m \cap U^2_m$.  Since $\psi_1$ is an embedding, we have $A \in C_m$.
Since we have $\langle a_{r,s} \rangle_{r,s} = \boldB$, it follows that $A \in U^1_m$.  Finally, since 
$\phi(e^{(n)}_{r,s}) = \psi_1(\embed_{n,n'}(e^{(n)}_{r,s})) = \sum_{j < m} a_{r + jn, s+ jn}$, we have $A \in U^2_m$, proving the claim.

Take $m$ as in the previous paragraph and set $n' = mn$.  By Lemma \ref{intersection}, there is a computable point $A$ belonging to  
$C_m \cap U^1_m \cap U^2_m$.  Let $\psi$ be the unital embedding of 
$M_{n'}(\C)$ into $\boldA$ for which $\psi(e^{(n')}_{r,s}) = a_{r,s}$.  
It follows that $\psi$ satisfies (\ref{lm:UHF.comp.ext::comp}) through 
(\ref{lm:UHF.comp.ext::units}).
\end{proof}

Once again, the proof of Lemma \ref{lm:UHF.comp.ext} shows that the conclusion holds uniformly in codes for $A^\#$ and $\phi$.

We now arrive at the key theorem of this subsection.

\begin{theorem}\label{thm:comp.UHF}
Suppose $\boldA$ is UHF and $\boldA^\#$ is computable.   
Then $\boldA^\#$ has a computable UHF certificate
$(n_j, \psi_j)_{j \in \N}$ so that $\psi_{j+1}^{-1}\psi_j = \embed_{n_j, n_{j+1}}$.  
\end{theorem}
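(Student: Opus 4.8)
The plan is to build the certificate recursively, seeding it with $\unit_\boldA$ and using Lemma~\ref{lm:UHF.comp.ext} as the engine that enlarges the matrix subalgebra one step at a time. That lemma only produces \emph{approximate} coherence between consecutive embeddings, so the real work will be to repair this into the \emph{exact} identity $\psi_{j+1}^{-1}\psi_j = \embed_{n_j,n_{j+1}}$ by conjugating the new embedding with a carefully chosen unitary close to the identity.

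First I would set $n_0 = 1$ and let $\psi_0 : M_1(\C) \to \boldA$ send $1$ to $\unit_\boldA$; this is a computable map since $\unit_\boldA$ is a computable point of $\boldA^\#$ by Proposition~\ref{prop:UHF.unit}. Fix a computable enumeration $(\rho_i)_{i \in \N}$ of the rational points of $\boldA^\#$. At stage $j$, given a computable unital embedding $\psi_j : M_{n_j}(\C) \to \boldA^\#$, I would apply Lemma~\ref{lm:UHF.comp.ext} with $\phi = \psi_j$, finite set $F = \{\rho_0, \ldots, \rho_j\}$, and a precision $k_j$ to be chosen, obtaining $n_{j+1}$ with $n_j \mid n_{j+1}$ and a computable unital embedding $\psi'_{j+1} : M_{n_{j+1}}(\C) \to \boldA^\#$ with $d(\rho_i, \ran(\psi'_{j+1})) < 2^{-k_j}$ for $i \leq j$ and such that the two $n_j \times n_j$ systems of matrix units $e_{r,s} := \psi_j(e^{(n_j)}_{r,s})$ and $f_{r,s} := \sum_{\ell < n_{j+1}/n_j}\psi'_{j+1}(e^{(n_{j+1})}_{r+\ell n_j,\, s+\ell n_j}) = \psi'_{j+1}(\embed_{n_j,n_{j+1}}(e^{(n_j)}_{r,s}))$ satisfy $\norm{e_{r,s}-f_{r,s}} < 2^{-k_j}$. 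Both are genuine systems of matrix units summing to $\unit_\boldA$.

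The key step is to upgrade this approximate coherence to exact coherence. I would form the computable point $z = \sum_{r=1}^{n_j} f_{r,1}\, e_{1,r}$, for which $\norm{z - \unit_\boldA} \leq n_j 2^{-k_j}$; taking $k_j$ large enough that $n_j 2^{-k_j} < \tfrac12$ makes $z$ invertible, and a direct computation gives $z\, e_{s,t} = f_{s,t}\, z$ for all $s,t$. Then $z^*z$ commutes with every $e_{s,t}$, so its square root $\abs{z}$ does too, and the unitary $u = z\abs{z}^{-1}$ satisfies $u\, e_{s,t}\, u^* = f_{s,t}$ \emph{exactly}, with $\norm{u-\unit_\boldA}$ small. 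Since $z^*z \geq \tfrac14\unit_\boldA$ has spectrum bounded away from $0$ by a computable amount, $\abs{z}^{-1}$ and hence $u$ are computable points, effectively in $j$. Setting $\psi_{j+1} := \operatorname{Ad}(u^*)\circ\psi'_{j+1}$, i.e.\ $\psi_{j+1}(A) = u^*\psi'_{j+1}(A)u$, yields a computable unital embedding with $\psi_{j+1}(\embed_{n_j,n_{j+1}}(e^{(n_j)}_{r,s})) = u^* f_{r,s} u = e_{r,s} = \psi_j(e^{(n_j)}_{r,s})$; as matrix units generate, $\psi_{j+1}\circ\embed_{n_j,n_{j+1}} = \psi_j$, which is exactly the desired relation and forces $\ran(\psi_j)\subseteq\ran(\psi_{j+1})$.

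Finally I would verify the certificate axioms. Condition (3) is the containment just noted; condition (4) holds because, for each fixed $i$, at every stage $j \geq i$ we have $d(\rho_i,\ran(\psi'_{j+1})) < 2^{-k_j}$, while conjugating by the near-identity $u$ moves $\ran(\psi'_{j+1})$ by at most $(\norm{u-\unit_\boldA}^2 + 2\norm{u-\unit_\boldA})$ times a bound on $\norm{\rho_i}$ (the estimate from the proof of Lemma~\ref{lm:UHF.ext}), so $d(\rho_i,\ran(\psi_{j+1}))\to 0$ and density of the rational points gives $\overline{\bigcup_j\ran(\psi_j)}=\boldA$. Choosing $k_j$ large enough (depending on $n_j$ and on $\max_{i\leq j}\norm{\rho_i}$, both available at stage $j$) secures all estimates, and the recursion $(n_j,\psi_j)\mapsto(n_{j+1},\psi_{j+1})$ is effective since Lemma~\ref{lm:UHF.comp.ext} and the functional calculus producing $u$ are uniform; hence the certificate is computable. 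The main obstacle, and the only content beyond bookkeeping, is precisely this passage from the approximate coherence Lemma~\ref{lm:UHF.comp.ext} delivers to the exact identity $\psi_{j+1}^{-1}\psi_j = \embed_{n_j,n_{j+1}}$: one must produce an intertwining unitary that is simultaneously exact, close to $\unit_\boldA$ (to preserve density), and computable (to preserve effectivity), which is exactly what the explicit $z$ together with the polar-decomposition construction accomplishes.
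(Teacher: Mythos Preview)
Your argument is correct and is a genuinely different construction from the one in the paper. The paper does not fix $\psi_j$ once and for all; instead it builds, for each $j$, a Cauchy sequence $(G^{(j,t)})_{t\geq j}$ of $n_j\times n_j$ matrix-unit systems, where at stage $t+1$ the new system $G^{(t+1,t+1)}$ is obtained from Lemma~\ref{lm:UHF.comp.ext} and all lower-level systems $G^{(j,t+1)}$ are \emph{redefined} by backwards recursion $g^{(j,t+1)}_{r,s}=\sum_{\ell} g^{(j+1,t+1)}_{r+\ell n_j,s+\ell n_j}$, so that exact coherence holds tautologically at every finite stage; the final $\psi_j$ is then the limit in $t$. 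Your approach trades this limiting process for a one-shot unitary repair via the explicit intertwiner $z=\sum_r f_{r,1}e_{1,r}$ and its polar part, yielding each $\psi_j$ immediately and permanently. What your route buys is conceptual directness (no auxiliary double-indexed array or delicate bookkeeping constant $k_{t+1}$); what the paper's route buys is that it never invokes functional calculus or the computability of $|z|^{-1}$, relying only on the c.e.-closedness of matrix-unit systems already established. Both handle the density condition~(4) by the same estimate $\norm{v-u^*vu}\leq(\norm{u-1}^2+2\norm{u-1})\norm{v}$, just packaged differently.
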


\begin{proof}
We define a computable sequence $(n_j)_{j \in \N}$ of positive integers with $n_j\mid n_{j+1}$ and a monotonic nondecreasing sequence $(\boldA_j)_{j \in \N}$ for which 
$\boldA = \overline{ \bigcup_{j \in \N} \boldA_{n_j}}$.  At the same time, for each $j \in \N$, 
we will also define a $*$-isomorphism $\psi_j$ of $M_{n_j}(\C)$ onto $\boldA_j$ that is a computable map from $M_{n_j}(\C)$ to $\boldA^\#$ 
uniformly in $j$.  

These subalgebras and maps will be built by a sequence of approximations.  
More precisely, for each $j,t \in \N$ with $j \leq t$, we build an approximation 
$\boldA_{j,t}$ of $\boldA_j$.  This is acccomplished by defining an $n_j \times n_j$ system 
$G^{(j,t)} = (g^{(j,t)}_{r,s})_{r,s}$ of matrix units and setting 
$\boldA_{j,t} = \langle g^{(j,t)}_{r,s} \rangle_{r,s}$.  Along the way, we will ensure the following three conditions.
\begin{itemize}
    \item $1_\boldA = \sum_r g^{(j,t)}_{r,r}$.
    \item $G^{(j,t)}$ is a computable point of $M_{n_j}(\boldA)$ uniformly in $j,t$.
    \item $(G^{(j,t)})_{t \in \N}$ is a strongly Cauchy sequence of $M_{n_j}(\boldA)^\#$.  
    That is, $\norm{G^{(j,t)} - G^{(j,t+1)}} < 2^{-t}$ for all $t \in \N$.
\end{itemize}

 By the last item, we may set $G^{(j)} = \lim_t G^{(j,t)}$.  Writing $G^{(j)}=(g^{(j)}_{r,s})$,  we then define $\boldA_j = \langle g^{(j)}_{r,s} \rangle_{r,s}$. 
When $j \leq t$, we define $\psi_{j,t}:M_{n_j}(\C)\to \boldA_{j,t}$ by $\psi_{j,t}(e^{(n_j)}_{r,s}) = g^{(j,t)}_{r,s}$  
and then define $\psi_j:M_{n_j}(\C)\to \boldA_j$ by $\psi_j = \lim_t \psi_{j,t}$.   

An auxiliary sequence to be constructed during the construction is an array $(\alpha^{(n, t)}_{r,s})_{r,s}$ of 
 scalars from $\mathbb{Q}(i)$ whenever $n<t$.   
 
Let $(\rho_n)$ denote an effective enumeration of the rational points of $\boldA^\#$.  
There are two key invariants to be maintained at each stage of this construction.
{
\renewcommand{\theenumi}{INV-\arabic{enumi}}
\begin{enumerate}
    \item $\norm{ \rho_n - \sum_{r,s} \alpha^{(n, t')}_{r,s} g^{(t',t)}_{r,s} } < 2^{-t'}$ 
    when $n < t' \leq t$.  \label{inv1}

    \item $g^{(j,t)}_{r,s} = \sum_{\ell < n_{j+1}/n_j} g^{(j+1, t)}_{r + \ell n_j, s+ \ell n_j}$ when $j + 1 \leq t$.\label{inv2}
\end{enumerate}
}
At each stage of the construction, we assume the invariants are satisfied at all prior stages.\\  

\noindent\bf Stage 0:\rm\ We define $n_0$ to be $1$ and $g^{(0,0)}_{1,1} = \unit_\boldA$.\\

\noindent\bf Stage $\mathbf t + 1$:\rm\ We begin by setting $n=n_t$, $\phi=\psi_{t,t}$, $F=\{\rho_0,\ldots,\rho_t\}$, and 
$$k_{t+1}  = 
\log_2 \left( 2^{t+1} n_t \prod_{n < t' \leq t} 
\frac{n_t n_{t'} (\norm{\rho_n} + 2^{-t'} + 1)}{2^{-t'} - \norm{\rho_n - \sum_{r,s} \alpha^{(n,t')}_{r,s} g^{(t',t)}_{r,s} } }\right).$$
% \begin{eqnarray*}
% n & = & n_t\\
% \phi & = & \psi_{t,t}\\
% F & = & \{\rho_0, \ldots, \rho_t \}\\
% k_{t+1} & = &
% \log_2 \left( 2^{t+1} n_t \prod_{n < t' \leq t} 
% \frac{n_t n_{t'} (\norm{\rho_n} + 2^{-t'} + 1)}{2^{-t'} - \norm{\rho_n - \sum_{r,s} \alpha^{(n,t')}_{r,s} g^{(t',t)}_{r,s} } }\right)  \\
% \end{eqnarray*}

By Lemma \ref{lm:UHF.comp.ext}, there exists $n_{t+1} > n_t$ with $n_t\mid n_{t+1}$ and a unital embedding $\psi_{t+1,t+1}$ of 
$M_{n_{t+1}}(\C)$ into $\boldA$ that satisfy the following three conditions.  
\begin{enumerate}
    \item $\psi_{t+1, t+1}$ is a computable map of $M_{n_{t+1}}(\C)$ to $\boldA^\#$. 

    \item For all $n \leq t$, $d(\rho_n, \ran(\psi_{t+1,t+1})) < 2^{-k_{t+1}}$.

    \item For all $r,s \in \{1, \ldots, n_t\}$,
    \[
\norm{g_{r,s}^{(t,t)} - \sum_{j < n_{t+1}/n_t} \psi_{t+1,t+1}(e^{(n_{t+1})}_{r + jn_t, s + jn_t})} < 2^{-k_{t+1}}.
    \]
\end{enumerate}
We set $g_{r,s}^{(t+1,t+1)} = \psi_{t+1,t+1}(e^{(n_{t+1})}_{r, s })$.  
For each $n \leq t$, compute an array $(\alpha^{(n, t+ 1)}_{r,s})_{r,s}$ of
scalars from $\mathbb{Q}(i)$ so that 
\[
\norm{ \rho_n - \sum_{r,s} \alpha^{(n, t+ 1)}_{r,s} g_{r,s}^{(t+1,t+1)} } < 2^{-(t+1)}.
\]
For $j \leq t$, by ``backwards recursion'', we set 
\[
g^{(j,t+1)}_{r,s} = \sum_{\ell < n_{j+1}/n_j} g^{(j+1, t+1)}_{r + \ell n_j, s+ \ell n_j}.
\]

We first claim that for every $t \in \N$, 
\[
\norm{ g^{(j,t)}_{r,s} - g^{(j,t+1)}_{r,s} } \leq \frac{n_t}{n_j} 2^{-k_{t+1}}
\]
whenever $j \leq t$ and $r,s \in \{1, \ldots n_j\}$.  
For, let $t \in \N$.  We proceed by `backward induction' on $j$.  The base case 
$j = t$ holds from (3) above and the definition of $g_{r,s}^{(t,t+1)}$.  Now suppose $j < t$. By construction, 
\begin{eqnarray*}
g_{r,s}^{(j,t)} & = & \sum_{\ell < n_{j+1}/n_j } g^{(j+1,t)}_{r + \ell n_j, s + \ell n_j} \text{ and }\\
g_{r,s}^{(j,t+1)} & = & \sum_{\ell < n_{j+1}/n_j } g^{(j+1,t+1)}_{r + \ell n_j, s + \ell n_j} \\
\end{eqnarray*}
By the inductive hypothesis, 
\[
\norm{g^{(j,t)}_{r,s} - g^{(j, t+1)}_{r,s} }
\leq \frac{n_{j+1}}{n_j} \frac{n_t}{n_{j+1}} 2^{-k_{t+1}} 
= \frac{n_t}{n_j} 2^{-k_{t+1}}
\]
as required.

We now show that the invariants are satisfied at every stage $t$.
We first note that the invariants are vacuously true when $t = 0$.  
Suppose the invariants hold at stage $t$.  We now show they hold at stage $t+1$.
(\ref{inv2}) holds by construction. 
Suppose $n < t' \leq t+1$.  We need to show
\[
\norm{ \rho_n - \sum_{r,s} \alpha^{(n, t')}_{r,s} g^{(t',t+1)}_{r,s} } < 2^{-t'}.
\]
If $t' = t+1$, then this is by construction.  Suppose $t' \leq t$.  
On the one hand, 
\[
\norm{\rho_n - \sum_{r,s} \alpha^{(n, t')}_{r,s} g^{(t', t+1)}_{r,s} } \leq
\norm{\rho_n - \sum_{r,s} \alpha^{(n,t')}_{r,s} g^{(t',t)}_{r,s} } + 
\sum_{r,s} |\alpha_{r,s}| \norm{ g^{(t', t+1)}_{r,s} - g^{(t',t)}_{r,s} }.
\]
By what has just been shown, 
\[
\norm{g^{(t',t)}_{r,s} - g^{(t',t+1)}_{r,s} } \leq \frac{n_t}{n_{t'}} 2^{-k_{t+1}}.
\]
We also have that 
\[
|\alpha^{(n,t')}_{r,s}| \leq \norm{\sum_{r,s} \alpha^{(n,t')}_{r,s} e^{(n_{t'})}_{r,s} } 
= \norm{\sum_{r,s} \alpha^{(n,t')}_{r,s} g^{(t', t+ 1)}_{r,s} } \leq \norm{\rho_n} + 2^{-t'}.
\]
Thus, 
\[
\sum_{r,s} |\alpha^{(n,t')}_{r,s}| \norm{ g^{(t',t+1)}_{r,s} - g^{(t',t)}_{r,s} } 
\leq (n_{t'})^2 (\norm{\rho_n} + 2^{-t'} ) \frac{n_t}{n_{t'}} 2^{-k_{t+1}} 
= n_t n_{t'} (\norm{\rho_n} + 2^{-t'} )  2^{-k_{t+1}}.
\]
By the definition of $k_{t+1}$, 
\[
n_t n_{t'} (\norm{\rho_n} + 2^{-t'} ) 2^{-k_{t+1}} < 2^{-t'} - \norm{ \rho_n - \sum_{r,s} \alpha^{(n, t')}_{r,s} g^{(t',t)}_{r,s} }.
\]
Hence, the invariants are maintained at all stages of the construction.

The only thing that remains to be verified is that $\boldA = \overline{\bigcup_{n \in \N} \boldA_n}$.  To see this, note that
$d(\rho_n, \boldA_{t}) \leq 2^{-t}$ whenever $n < t$.  
Thus, every rational vector of $\boldA^\#$ belongs to $\overline{\bigcup_{n \in \N} \boldA_n}$, which clearly suffices.
\end{proof}

\begin{remark}
If $\boldA^\#$ is a computable presentation of the UHF algebra $\boldA$ and $(n_j,\psi_j)$ is a computable UHF certificate of $\boldA^\#$, then it is clear that one may compute, for every rational $\epsilon>0$ and rational point $y$ of $A^\#$, an integer $j$ and a rational point $x$ of $M_{n_j}(\mathbb{C})$ so that $\|\psi_j(x)-y\|<\epsilon$.  In other words, the union of the ranges of the $\psi_j$'s is ``computably dense.''  Conversely, if $A^\#$ is a presentation of the UHF algebra $\boldA$ for which there is a computable UHF certificate of $\boldA^\#$ satisfying this extra computable density property, then $\boldA^\#$ is readily seen to be computable.
\end{remark}

\subsection{Computability of $K$-theory}

We saw in Corollary \ref{cor:K0.comp.func} that if $\boldA^\#$ is a computable presentation of a unital \cstar-algebra $\boldA$, then $K_0(\boldA^\#)$ is a c.e. presentation of $K_0(\boldA)$. Moreover, Proposition \ref{prop:cone.ce} implies that if $\boldA$ is UHF, then the positive cone $K_0(\boldA)^+$ is a c.e. subset of $K_0(\boldA^\#)$ (since UHF algebras are stably finite).  We now show that for UHF algebras, the situation is better, in that $K_0(\boldA^\#)$ is actually a computable presentation of $K_0(\boldA)$ and the positive cone $K_0(A)^+$ is a computable subset of $K_0(\boldA^\#)$.

\begin{theorem}\label{thm:comp.trace}  %UHF
Suppose $\boldA$ is a UHF algebra.  If $\boldA^\#$ is computable, then 
the tracial state $\tau_\boldA$ of $\boldA$ is a computable map from $\boldA^\#$ to $\C$.
\end{theorem}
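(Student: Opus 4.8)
The plan is to exploit the computable UHF certificate furnished by Theorem \ref{thm:comp.UHF} together with the fact that $\tau_\boldA$ is completely determined by its restrictions to the approximating matrix subalgebras, on which it has an explicit closed form. So I would begin by fixing a computable UHF certificate $(n_j, \psi_j)_{j \in \N}$ of $\boldA^\#$, and observe that for each $j$ the composite $\tau_\boldA \circ \psi_j$ is a tracial state on $M_{n_j}(\C)$: it is a trace because $\tau_\boldA$ is, and a state because $\psi_j$ is a unital $*$-homomorphism and $\tau_\boldA$ is a state. Since $M_{n_j}(\C)$ carries a unique tracial state, this forces $\tau_\boldA(\psi_j(x)) = \frac{1}{n_j}\operatorname{Tr}(x)$ for every $x \in M_{n_j}(\C)$, where $\operatorname{Tr}$ is the unnormalized trace. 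The point is that when $x$ is a rational point of the standard presentation of $M_{n_j}(\C)$, its entries lie in $\Q(i)$, so $\frac{1}{n_j}\operatorname{Tr}(x)$ is an element of $\Q(i)$ that can be computed exactly.

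With this in hand, the algorithm I would describe runs as follows. On input a rational point $q$ of $\boldA^\#$ and $k \in \N$, first use the computable density property recorded in the remark following Theorem \ref{thm:comp.UHF} to compute some $j$ and a rational point $x$ of $M_{n_j}(\C)$ with $\norm{\psi_j(x) - q} < 2^{-k}$; then output $r = \frac{1}{n_j}\operatorname{Tr}(x) \in \Q(i)$. Correctness is immediate from the previous paragraph and the fact that a state is contractive:
\[
\abs{\tau_\boldA(q) - r} = \abs{\tau_\boldA\bigl(q - \psi_j(x)\bigr)} \leq \norm{q - \psi_j(x)} < 2^{-k},
\]
which is exactly the approximation demanded of a computable map into $\C$ with its standard presentation.

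I expect no serious obstacle here, since the genuine difficulty has already been absorbed into Theorem \ref{thm:comp.UHF}: the only nontrivial ingredient is the ability to effectively locate a pair $(j,x)$ with $\psi_j(x)$ close to $q$, and this is precisely the computable density consequence of possessing a computable UHF certificate (the maps $\psi_j$ are computable uniformly in $j$ and the union of their ranges is dense). The remaining steps are the standard facts that a state is norm-decreasing and that a matrix algebra has a unique tracial state, plus the trivial observation that the normalized trace of a $\Q(i)$-matrix is a rational computation. The one point I would take care to verify in the write-up is that the search from the remark does terminate and return a correct pair for the specific certificate that Theorem \ref{thm:comp.UHF} provides.
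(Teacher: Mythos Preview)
Your proposal is correct and follows essentially the same approach as the paper's proof: both invoke the computable UHF certificate from Theorem~\ref{thm:comp.UHF}, identify $\tau_\boldA \circ \psi_j$ with the normalized trace on $M_{n_j}(\C)$, use the computable density of $\bigcup_j \ran(\psi_j)$ to approximate a given rational point, and finish with the contractivity of states. The only cosmetic difference is that you output $\frac{1}{n_j}\operatorname{Tr}(x)$ exactly, whereas the paper phrases the argument in terms of rational balls and introduces one extra approximation step for $\tau_j(A)$; this does not affect the substance.
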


\begin{proof}
By Theorem \ref{thm:comp.UHF}, there is a computable UHF certificate $(n_j,\psi_j)_{j\in \N}$ for $\boldA^\#$.
For each $j \in \mathbb{N}$, let $\boldA_j = \ran(\psi_j)$ and let $\tau_j$ be the tracial state of $M_{n_j}(\C)$.  Note that the maps $\tau_j$ are computable, uniformly in $j$.

Given a rational open ball $B(\rho; r)$ of $\boldA^\#$, we compute a rational open disk 
$D(\zeta; 3r) \subseteq \C$ as follows.  First, we compute $j$ and a rational point
$A \in M_{n_j}(\C)$ so that $\norm{\psi_j(A) - \rho} < r$, whence it follows that 
\[|\tau_j(A) - \tau(\rho)| = |\tau\psi_j(A) - \tau(\rho)| < r.\]  
Next, since $\tau_j$ is computable, we can compute $\zeta \in \Q(i)$ so that $|\zeta - \tau_j(A)| < r$.  
As a result, for each $a \in B(\rho; r)$, we have
\[
|\tau(a) - \zeta| \leq |\tau(a) - \tau(\rho)| + |\tau(\rho) - \tau_j(A)| + |\zeta - \tau_j(A)| < 3r.
\]
Thus, $\tau$ is a computable map of $\boldA^\#$ into $\C$. 
\end{proof}

\begin{definition}
Suppose $\tau$ is a trace on a \cstar-algebra $\boldA$.  We say that $(\boldA,\tau)$ is \emph{factor-like} provided that for any two projections $p,q\in P_{<\omega}(\boldA)$, $p\mvn q$ if and only if $\tau(p)=\tau(q)$.
\end{definition}

The nomenclature stems from the fact that the conclusion of the definition holds for tracial factors.  Note that $\tau(p)=\tau(q)$ always holds when $p\mvn q$, whence the import of the definition is the statement that projections of the same trace are Murray-von Neumann equivalent.

The following is probably well-known, but since we could not find it explicitly stated in the literature, we include a proof.

\begin{lemma}
Suppose that $\boldA$ is UHF and $\tau_{\boldA}$ is its unique tracial state.  Then $(\boldA,\tau_{\boldA})$ is factor-like.    
\end{lemma}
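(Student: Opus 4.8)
The forward implication is immediate and holds for any trace: if $p \mvn q$, then $p = vv^*$ and $q = v^*v$ for a suitable $v$ (over the relevant matrix amplification), and the tracial identity $\tau(vv^*) = \tau(v^*v)$ gives $\tau(p) = \tau(q)$. So the plan is to concentrate on the converse, that two projections of equal trace are Murray--von Neumann equivalent.

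First I would reduce to genuine projections in a single UHF algebra. Given $p \in P_m(\boldA)$ and $q \in P_n(\boldA)$ with $\tau(p) = \tau(q)$, I replace them by $p \oplus \zerom_{N-m}$ and $q \oplus \zerom_{N-n}$ for $N = \max(m,n)$; by \cite[Proposition 2.3.2.i]{Rordam.Larsen.Laustsen.2000} this changes neither the amplified trace nor the mvn class, and both are now projections in $\boldB := M_N(\boldA)$. Writing $\boldA = \overline{\bigcup_k \boldA_k}$ with $\boldA_k \cong M_{d_k}(\C)$, we have $\boldB = \overline{\bigcup_k M_N(\boldA_k)}$ with $M_N(\boldA_k)\cong M_{Nd_k}(\C)$, so $\boldB$ is again UHF. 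If $\sigma$ denotes its unique tracial state, one checks directly that $\sigma = \tfrac{1}{N}(\tau_{\boldA}\otimes \mathrm{Tr}_N)$, so $\tau(p)=\tau(q)$ becomes $\sigma(p)=\sigma(q)$, while mvn equivalence in $P_{<\omega}(\boldA)$ between elements of $P_N(\boldA)$ is exactly mvn equivalence inside $\boldB$. Thus it suffices to prove: in a UHF algebra $\boldB = \overline{\bigcup_k \boldB_k}$ with $\boldB_k \cong M_{e_k}(\C)$ and unique tracial state $\sigma$, any two projections $p,q\in\boldB$ with $\sigma(p)=\sigma(q)$ are mvn equivalent.

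The key step is to pass to a finite-dimensional subalgebra. By density of $\bigcup_k \boldB_k$ and the continuous functional calculus (approximate $p$ by a self-adjoint element of some $\boldB_k$, then apply a function that is $0$ near $0$ and $1$ near $1$ to land on a nearby projection), there is an index $k$ and projections $p',q'\in\boldB_k$ with $\norm{p-p'}<1$ and $\norm{q-q'}<1$; nestedness of the $\boldB_k$ lets me use a common $k$. By \cite[Propositions 2.2.4 and 2.2.7]{Rordam.Larsen.Laustsen.2000}, the bound $\norm{p-p'}<1$ already forces $p \mvn p'$, and likewise $q \mvn q'$. The crucial observation is that, \emph{because} $p\mvn p'$ and $q\mvn q'$, the traces agree on the nose: $\sigma(p')=\sigma(p)=\sigma(q)=\sigma(q')$. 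Since $\boldB_k\cong M_{e_k}(\C)$ and the restriction of $\sigma$ to $\boldB_k$ is a tracial state on a full matrix algebra, hence the normalized trace, I get $\mathrm{rank}(p') = e_k\,\sigma(p') = e_k\,\sigma(q') = \mathrm{rank}(q')$. Equal-rank projections in $M_{e_k}(\C)$ are unitarily equivalent, hence mvn equivalent via a partial isometry lying in $\boldB_k\subseteq\boldB$, so $p'\mvn q'$, and chaining gives $p \mvn p' \mvn q' \mvn q$.

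The main obstacle—and the thing this argument is arranged to avoid—is an apparent circularity in the rank comparison: the trace reads off the rank of a projection in $\boldB_k$ only up to the quantum $1/e_k$, and one cannot in general approximate $p$ by a projection in $\boldB_k$ to within $1/e_k$, since $e_k\to\infty$ with $k$. The resolution is the exactness in the previous paragraph: the mvn equivalence $p\mvn p'$ makes $\sigma(p')=\sigma(p)$ hold exactly (not merely approximately), so the integers $\mathrm{rank}(p')$ and $\mathrm{rank}(q')$ are forced to coincide no matter how crude the approximation was, as long as it was within $1$. (Alternatively, one could route the converse through the identification $K_0(\boldA)\cong\mathbb{Q}(\epsilon_{\boldA})$ recalled above together with injectivity of $\gamma_{\mathcal D(\boldA)}$, but the direct approximation argument is more self-contained.)
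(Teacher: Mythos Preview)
Your proof is correct and follows essentially the same approach as the paper: reduce to projections in a single UHF algebra (since matrix amplifications of UHF algebras are UHF), approximate by projections in a common matrix subalgebra, use that close projections are Murray--von Neumann equivalent so the traces of the approximants agree \emph{exactly}, and conclude by the elementary fact that equal-trace projections in a full matrix algebra are unitarily equivalent. The paper's version is terser---it simply asserts the existence of the approximating projections $p',q'$ in a matrix subalgebra and invokes that matrix algebras are factors---whereas you spell out the functional-calculus step and the exactness point explicitly, but the skeleton is the same.
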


\begin{proof}
Since $M_n(\boldA)$ is UHF as well, it suffices to consider the case that $p,q\in P(\boldA)$ are such that $\tau_{\boldA}(p)=\tau_{\boldA}(q)$.  Take a \cstar-subalgebra $\boldB\subseteq \boldA$ such that $\boldB\cong M_n(\C)$ for some $n$ and which contains projections $p'$ and $q'$ such that $\|p-p'\|,\|q-q'\|<1/2$.  Thus, $p\mvn p'$, $q\mvn q'$, and consequently 
$\tau_{\boldA}(p')=\tau_{\boldA}(q')$.  However, $\tau_{\boldA}$ agrees with the usual trace on $M_n(\C)$ (once $M_n(\C)$ is identified with $\boldB$), and it is well-known that matrix algebras equipped with their usual trace are factor-like (in fact, they are factors!).
\end{proof}

The following is immediate from the definitions.

\begin{proposition}\label{factorlike}
Suppose that $(\boldA,\tau)$ is factor-like  and that $\boldA^\#$ is a presentation such that $\tau$ is a computable map from $A^\#$ to $\C$.  Then there is a c.e. set $R\subseteq \N^2$ such that, for all  $e_0, e_1 \in \N$, 
if $e_j$ is a $\boldA^\#$-index of a projection $p_j\in P_{<\omega}(\boldA)$, then 
$p_0 \not\mvn p_1$ if and only if $R(e_0,e_1)$.
\end{proposition}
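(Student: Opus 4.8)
The plan is to read off the relation $p_0 \not\mvn p_1$ from the numerical data produced by $\tau$. By factor-likeness, $p_0 \not\mvn p_1$ holds precisely when $\tau(p_0) \neq \tau(p_1)$, and since $\tau$ is computable we can approximate both trace values effectively; as distinctness of two computable reals is a semidecidable condition, enumerating the stages that witness the separation of these values yields the desired c.e.\ set $R$. Everything else is bookkeeping.

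First I would pin down how the trace is evaluated on $P_{<\omega}(\boldA)$ and check that it is computable there. Following the conventions of Definition \ref{def:supports} and Corollary \ref{cor:mvn.pred}, an $\boldA^\#$-index $e_j$ of a projection $p_j \in P_{<\omega}(\boldA)$ specifies a level $n_j$ together with an $M_{n_j}(\boldA^\#)$-index of $p_j$ as a computable point, so from $e_j$ one can compute, uniformly, arbitrarily good rational-point approximations to $p_j$ in $M_{n_j}(\boldA^\#)$. The relevant extension of $\tau$ to $P_{<\omega}(\boldA)$ is the \emph{unnormalized} amplified trace $\tau_n = \mathrm{Tr}_n \otimes \tau$ on $M_n(\boldA)$ (this is the extension forced by the factor-like definition, since e.g.\ $\unit_\boldA \mvn \unit_\boldA \oplus \zerom_{n-1}$ must have equal trace). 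I claim $\tau_n$ is a computable map from $M_n(\boldA^\#)$ to $\C$, uniformly in $n$: under the identification of Proposition \ref{prop:MnA.pres.comp} it is the functional $\mathrm{Tr}_n \otimes \tau$ on the tensor-product presentation, equivalently the entrywise amplification of the computable linear map $\tau$ followed by the computable unnormalized trace $\mathrm{Tr}_n$ on $M_n(\C)$. Composing, from $e_j$ I obtain a computable sequence of rationals converging to the real number $\tau(p_j)$ with a known modulus, uniformly in $e_j$.

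The main step is then the enumeration. Define $R$ to be the set of pairs $(e_0,e_1)$ for which there exist $k \in \N$ and rationals $q_0, q_1$, computed as above, with $|q_j - \tau(p_j)| < 2^{-k}$ and $|q_0 - q_1| > 2^{-k+1}$. Searching over $k$ and over the approximation stages shows $R$ is c.e. When $e_0, e_1$ are genuine projection indices, a triangle-inequality estimate shows that such a witness exists if and only if $\tau(p_0) \neq \tau(p_1)$: the separation $|q_0-q_1| > 2^{-k+1}$ forces $\tau(p_0) \neq \tau(p_1)$, and conversely if $|\tau(p_0)-\tau(p_1)| = \delta > 0$ then any $k$ with $2^{-k} < \delta/4$ produces a witness. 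By factor-likeness this is in turn equivalent to $p_0 \not\mvn p_1$, so $R$ is as required. (Note that on non-projection indices $R$ may behave arbitrarily, which is harmless, since the assertion is conditional on $e_0,e_1$ being genuine indices.)

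The only point deserving care, and the one thing beyond routine manipulation, is the uniform computability of the amplified trace $\tau_n$ from the level-one data — i.e.\ justifying that a computable trace on $\boldA^\#$ induces a computable trace on every $M_n(\boldA^\#)$. Everything else reduces to the elementary observation that \emph{distinctness} of computable reals is c.e.; this is precisely why the proposition asserts c.e.-ness of non-equivalence rather than of $\mvn$ itself, as equality of computable reals is in general not c.e.
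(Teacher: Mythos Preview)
Your proposal is correct and takes the same approach as the paper, which simply records the proposition as ``immediate from the definitions.'' Your write-up merely unpacks what the paper leaves implicit: the uniform computability of the amplified trace $\tau_n$ on $M_n(\boldA^\#)$ and the fact that distinctness of two computable reals is a c.e.\ condition.
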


% \begin{proposition}\label{prop:mvn.UHF}
% Suppose $\boldA^\#$ is a computable presentation of a UHF algebra $\boldA$.  Then there is a 
% $\Sigma^0_1$ set $R \subseteq \N^2$ so that for all $e_0, e_1 \in \N$, 
% if $e_j$ is a $\boldA^\#$ index of a projection $p_j$, then 
% $p_0 \not\mvn p_1$ if and only if $R(e_0,e_1)$.
% \end{proposition}

% \begin{proof}
% Let $\tau$ denote the tracial state of $\boldA$.  
% It is well-known that, since $\boldA$ is UHF, $\tau(p) \in \Z$ whenever $p \in \boldP(\boldA)$ and that 
% two projections in $\boldA$ have the same $\tau$-value if and only if they are 
% Murray-von Neumann equivalent.  
% The existence of $R$ follows.
% \end{proof}

% \begin{remark}
% We note the proof of Proposition \ref{prop:mvn.UHF} is uniform.
% \end{remark}

\begin{theorem}\label{thm:K0.UHF}
If $\boldA^\#$ is a computable presentation of a UHF algebra $\boldA$, then
$K_0(\boldA^\#)$ is a computable presentation of $K_0(\boldA)$ and $K_0(\boldA)^+$ is a computable subset of 
$K_0(\boldA^\#)$.
\end{theorem}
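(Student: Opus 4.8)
The plan is to reduce the statement to the machinery already developed, exploiting the key structural fact that UHF algebras are factor-like with a computable trace. First I would recall that, since a c.e. presentation of a UHF algebra is automatically computable, Corollary \ref{cor:K0.comp.func} already delivers that $K_0(\boldA^\#) = \mathcal{G}(\mathcal{D}(\boldA^\#))$ is a c.e. presentation; the real work is to upgrade ``c.e.'' to ``computable'' at the level of $\mathcal{D}(\boldA^\#)$ and then invoke Proposition \ref{Grothendieckcomputable}.

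The first main step is to show that $\mathcal{D}(\boldA^\#)$ is a \emph{computable} presentation. By the lemma establishing that UHF algebras are factor-like, together with Theorem \ref{thm:comp.trace} (which gives that $\tau_\boldA$ is a computable map from $\boldA^\#$ to $\C$), Proposition \ref{factorlike} applies and yields a c.e. set $R$ such that, on indices of projections, $p_0 \not\mvn p_1$ iff $R(e_0,e_1)$; in other words, the \emph{complement} of Murray-von Neumann equivalence is c.e. On the other hand, Corollary \ref{cor:mvn.pred} already gives that $\mvn$ itself is c.e. Combining the two, the relation $p \mvn p'$ is decidable from indices of the projections. Since $\boldA^\#$ computably supports $\mathcal{D}(\boldA^\#)$, from any pair of labels $w, w' \in D_\omega$ one can compute indices of projections whose $\mvn$-classes are labelled by $w$ and $w'$, so deciding their $\mvn$-equivalence decides whether $(w,w') \in \ker(\mathcal{D}(\boldA^\#))$. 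Hence the kernel is computable and $\mathcal{D}(\boldA^\#)$ is computable.

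The second main step is to verify that $\mathcal{D}(\boldA)$ is cancellative, so that Proposition \ref{Grothendieckcomputable} applies. This again rests on factor-likeness: the (suitably extended) trace induces a semigroup homomorphism $[p]_{\mvn} \mapsto \tau(p)$ from $\mathcal{D}(\boldA)$ into the cancellative semigroup $(\mathbb{R}_{\geq 0}, +)$, and this homomorphism is injective precisely because $(\boldA, \tau_\boldA)$ is factor-like. An injective homomorphism into a cancellative semigroup forces $\mathcal{D}(\boldA)$ itself to be cancellative. With $\mathcal{D}(\boldA)$ cancellative and $\mathcal{D}(\boldA^\#)$ computable, Proposition \ref{Grothendieckcomputable} gives that $K_0(\boldA^\#) = \mathcal{G}(\mathcal{D}(\boldA^\#))$ is computable, establishing the first assertion.

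Finally, for the positive cone I would appeal to Corollary \ref{linearcomputable}. Since UHF algebras are stably finite and $K_0(\boldA) \cong \mathbb{Q}(\epsilon_\boldA)$ is a subgroup of $(\mathbb{Q},+)$ carrying the inherited, hence linear, order, the hypothesis $K_0(\boldA) = K_0(\boldA)^+ \cup (-K_0(\boldA)^+)$ holds, and the corollary immediately gives that $K_0(\boldA)^+$ is a computable subset of $K_0(\boldA^\#)$. The only genuinely delicate point is the co-c.e.\ ness of the Murray-von Neumann kernel in the first step, which is exactly where the factor-like hypothesis and the computability of the trace are indispensable; everything else is an assembly of previously established facts.
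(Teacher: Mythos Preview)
Your proposal is correct and follows essentially the same route as the paper: reduce to showing $\mathcal{D}(\boldA^\#)$ is computable via the factor-like property, the computability of the trace (Theorem~\ref{thm:comp.trace}), and Proposition~\ref{factorlike}, then apply Proposition~\ref{Grothendieckcomputable} and Corollary~\ref{linearcomputable}. You are slightly more explicit than the paper in spelling out why $\mathcal{D}(\boldA)$ is cancellative (a hypothesis of Proposition~\ref{Grothendieckcomputable} that the paper takes for granted), which is a welcome clarification.
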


\begin{proof}
By Proposition \ref{Grothendieckcomputable}, in order to show that $K_0(\boldA^\#)$ is computable, it suffices to show that $\mathcal{D}(\boldA^\#)$ is computable.  Towards this end, fix $w_0, w_1 \in D_\omega$.  
Since $\mathcal{D}(\boldA^\#)$ is computably supported by $\boldA^\#$, for $j=0,1$
we can compute $n_j \in \N$ and an $\boldA^\#$-index of a projection $p_j \in M_{n_j}(\boldA)$
for which $w_j$ is a $\mathcal{D}(\boldA^\#)$-label.  Let $n = \max\{n_0, n_1\}$.  Since 
$(\boldA,\tau_{\boldA})$ is factor-like, Theorem \ref{thm:mvn.ce} and Proposition \ref{factorlike} 
allow us to determine if $p_0 \mvn p_1$, as required to show that $\mathcal{D}(\boldA^\#)$ is 
computable.
% Next, we turn to $K_0(\boldA^\#)$.  Fix $x_0, x_1 \in F_\omega$, and let $u_0, u_1$ be elements of $K_0(\boldA)$ such that for $j=0,1$, $x_j$ is a $K_0(\boldA^\#)$ label for $u_j$.  For $j=0,1$ we can effectively search for $y_{j,0}$ and $y_{j,1}$ in $D_\omega$ such that $x_j = \gamma(y_{j,1}) - \gamma(y_{j,0})$, where $\gamma : D_\omega \to F_\omega$ is the Grothendieck map.  For each $j,k$, let $v_{j,k} \in \mathcal{D}(\boldA)$ be such that $y_{j,k}$ is a $\mathcal{D}(\boldA^\#)$ label for $v_{j,k}$.  Since $\mathcal{D}(\boldA^\#)$ is computable, we can effectively determine whether $v_{1,1}+v_{0,0} = v_{0,1}+v_{1,0}$, and hence we can effectively determine if $u_0 = u_1$.

Finally, since $\boldA$ is UHF, the ordering on $K_0(\boldA)$ is linear,
and hence $K_0(\boldA)^+$ is a computable subset of $K_0(\boldA^\#)$ by Corollary \ref{linearcomputable}.
\end{proof}

%On the side of supernatural numbers, the following is immediate from Corollary \ref{cor:epsilon.sup}.
%\begin{corollary}\label{cor:UHF.ce} 
%Suppose $\boldA$ is a UHF algebra.  Then, the supernatural number of $\boldA$ is lower %semicomputable
%if and only if 
%\[
%\{(p,m)\ :\ \mbox{$p$ is prime and $M_{p^m}(\C)$ unitaly $\star$-embeds into $\boldA$}\}
%\]
%is c.e.
%\end{corollary}

We now turn to the connection with supernatural numbers.  The following definition is 
an adaptation of a standard concept from computable analysis.

\begin{definition}
\label{defn:frombelow}
A supernatural number $\epsilon$ is \emph{lower semicomputable} if 
it is the pointwise limit of a uniformly computable 
sequence $(h_j)_{j \in \N}$ of functions from $\mathbb{P}$ into $\N$ 
so that $h_j(p) \leq h_{j + 1}(p)$ for all $j \in \N$ and $p \in \mathbb{P}$.
\end{definition}

We are now prepared to demonstrate the relations between computability properties of $\boldA$, $K_0(\boldA)$, and $\epsilon_{\boldA}$.
\begin{theorem}\label{thm:UHF.comp.pres}
Suppose $\boldA$ is a UHF algebra.  The following are equivalent.
\begin{enumerate}
    \item $\boldA$ has a computable presentation. \label{thm:UHF.comp.pres::comp.pres}
    \item $K_0(\boldA)$ has a computable presentation. \label{thm:UHF.comp.pres::K0.comp.pres}
    \item $K_0(\boldA)$ has a c.e. presentation.\label{thm:UHF.comp.pres::K0.ce.pres}
    \item The supernatural number of $\boldA$ is lower semicomputable.\label{thm:UHF.comp.pres::super}
\end{enumerate}
\end{theorem}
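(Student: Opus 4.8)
The plan is to prove the four statements equivalent by establishing the cycle $(\ref{thm:UHF.comp.pres::comp.pres}) \Rightarrow (\ref{thm:UHF.comp.pres::K0.comp.pres}) \Rightarrow (\ref{thm:UHF.comp.pres::K0.ce.pres}) \Rightarrow (\ref{thm:UHF.comp.pres::super}) \Rightarrow (\ref{thm:UHF.comp.pres::comp.pres})$. The first implication is immediate from Theorem \ref{thm:K0.UHF}: a computable presentation $\boldA^\#$ yields the computable presentation $K_0(\boldA^\#)$ of $K_0(\boldA)$. The second is trivial, since a computable kernel is in particular c.e., so every computable group presentation is a c.e. presentation. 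Only the implications $(\ref{thm:UHF.comp.pres::K0.ce.pres}) \Rightarrow (\ref{thm:UHF.comp.pres::super})$ and $(\ref{thm:UHF.comp.pres::super}) \Rightarrow (\ref{thm:UHF.comp.pres::comp.pres})$ require work.

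For $(\ref{thm:UHF.comp.pres::K0.ce.pres}) \Rightarrow (\ref{thm:UHF.comp.pres::super})$, I would exploit the isomorphism $K_0(\boldA) \cong \mathbb{Q}(\epsilon_{\boldA})$ from \cite[Lemma 7.4.4.ii and Theorem 7.4.5.i]{Rordam.Larsen.Laustsen.2000} carrying $[\unit_{\boldA}]_0$ to $1$. Under this identification, for a prime $p$ and $m \in \N$ one has $m \leq \epsilon_{\boldA}(p)$ if and only if $p^{-m} \in \mathbb{Q}(\epsilon_{\boldA})$, equivalently if and only if there is $y \in K_0(\boldA)$ with $p^m y = [\unit_{\boldA}]_0$. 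Writing the given c.e. presentation as $(K_0(\boldA), \nu)$ and fixing (non-uniformly) a label $w_u \in F_\omega$ of the genuine element $[\unit_{\boldA}]_0$, the condition $m \leq \epsilon_{\boldA}(p)$ becomes: there is $w \in F_\omega$ with $(w^{p^m}, w_u) \in \ker(\nu)$, where $w^{p^m}$ is the $p^m$-fold product of $w$ with itself. As $\ker(\nu)$ is c.e., the set $W = \{(p,m) : m \leq \epsilon_{\boldA}(p)\}$ is c.e. and contains $(p,0)$ for every $p$. Setting $h_j(p) = \max\{m : (p,m) \text{ is enumerated into } W \text{ within } j \text{ steps}\}$ gives a uniformly computable, pointwise nondecreasing sequence of functions $\primes \to \N$ with $\lim_j h_j(p) = \epsilon_{\boldA}(p)$, so $\epsilon_{\boldA}$ is lower semicomputable in the sense of Definition \ref{defn:frombelow}. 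The conceptual subtlety here is that the abstract group $K_0(\boldA)$ does not single out its order unit; the point is that $[\unit_{\boldA}]_0$ is nonetheless an actual element with some label $w_u$, and since $(\ref{thm:UHF.comp.pres::super})$ asserts only the \emph{existence} of a lower semicomputable approximation, the non-uniform choice of $w_u$ is harmless. (Anchoring at the unit is important: the $p$-heights of an arbitrary nonzero element would recover $\epsilon_{\boldA}$ only up to a finite modification at finitely many primes.)

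For $(\ref{thm:UHF.comp.pres::super}) \Rightarrow (\ref{thm:UHF.comp.pres::comp.pres})$, fix a uniformly computable, pointwise nondecreasing $(h_j)$ with $\lim_j h_j = \epsilon_{\boldA}$ and set $n_k = \prod_{p \in \primes,\, p \leq k} p^{h_k(p)}$. Writing $v_p$ for the $p$-adic valuation, $(n_k)_{k \in \N}$ is a computable sequence of positive integers with $n_k \mid n_{k+1}$ (since $h_k(p) \leq h_{k+1}(p)$ and the range of primes only grows), and $\sup_k v_p(n_k) = \lim_k h_k(p) = \epsilon_{\boldA}(p)$ for every prime $p$. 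By Glimm's classification \cite{Glimm.1960}, the algebra $\boldB = \lim_{\rightarrow}(M_{n_k}(\C), \embed_{n_k, n_{k+1}})$ has supernatural number $\epsilon_{\boldA}$ and hence $\boldB \cong \boldA$. It then remains to give $\boldB$ a computable presentation, for which I would take as special points the canonical images in $\boldB$ of the matrix units $e^{(n_k)}_{r,s}$ over all levels $k$ and indices $r,s$; these generate a dense $*$-subalgebra. Given a rational point, one computes the least common level $K$ containing all the matrix units occurring in it, promotes each to level $K$ using the explicit computable maps $\embed_{n_j, n_K}$, and evaluates to an explicit matrix $X \in M_{n_K}(\C)$ with entries in $\Q(i)$. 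Since $M_{n_K}(\C) \hookrightarrow \boldB$ is isometric, the $C^*$-norm of the rational point is the operator norm of $X$, which is uniformly computable by standard computable analysis; thus the presentation is computable.

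I expect the two nontrivial implications to carry the whole difficulty, and between them the genuinely delicate point is $(\ref{thm:UHF.comp.pres::K0.ce.pres}) \Rightarrow (\ref{thm:UHF.comp.pres::super})$: one must recognize that lower semicomputability of $\epsilon_{\boldA}$ can be read off a purely abstract c.e. presentation of $K_0(\boldA)$ by anchoring the divisibility test at the (non-uniformly labelled) order unit, rather than at a generic element whose heights would only determine $\epsilon_{\boldA}$ up to finite ambiguity. The implication $(\ref{thm:UHF.comp.pres::super}) \Rightarrow (\ref{thm:UHF.comp.pres::comp.pres})$ is mostly a bookkeeping construction, its only real content being that norms in a direct limit of matrix algebras reduce to uniformly computable operator norms of finite complex-rational matrices.
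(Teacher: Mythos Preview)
Your proof is correct and follows essentially the same route as the paper: the same cycle of implications, the same anchoring at a non-uniformly chosen label of the order unit for $(\ref{thm:UHF.comp.pres::K0.ce.pres}) \Rightarrow (\ref{thm:UHF.comp.pres::super})$, and the same truncated-product construction of $(n_k)$ for $(\ref{thm:UHF.comp.pres::super}) \Rightarrow (\ref{thm:UHF.comp.pres::comp.pres})$. The only difference is that where the paper cites \cite[Lemma 2.7.3]{Goldbring.2024+} for the computable presentability of the direct limit, you supply the explicit argument via promotion of matrix units and computability of finite matrix norms---a slightly more self-contained presentation of the same idea.
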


\begin{proof}
Theorem \ref{thm:K0.UHF} already states that 
(\ref{thm:UHF.comp.pres::comp.pres}) implies (\ref{thm:UHF.comp.pres::K0.comp.pres}).
It is definitional that (\ref{thm:UHF.comp.pres::K0.comp.pres}) implies (\ref{thm:UHF.comp.pres::K0.ce.pres}). 
For (\ref{thm:UHF.comp.pres::K0.ce.pres}) $\Rightarrow$ (\ref{thm:UHF.comp.pres::super}), set $G = K_0(\boldA)$, and let $G^\#$ be a c.e. presentation of $G$.
Since $\boldA$ is UHF, we may assume $G = \Q(\epsilon_A)$ in that, from the presentation $G^\#$, we may form a c.e. presentation of 
$\Q(\epsilon_\boldA)$ that has the same index.   
Fix a $G^\#$-label $w_0$ of $1$ and set 
\[
E = \{(p,m)\in \primes \times \N \ :\ \mbox{$M_{p^m}(\C)$ unitally $\star$-embeds into $\boldA$}\}.
\]
By Corollary \ref{cor:epsilon.sup}, to show that $\epsilon_\boldA$ is lower semicomputable, it suffices to show $E$ is c.e.  
Let $\kappa$ denote the kernel of $G^\#$.  Fix a prime $p$ and an $m \in \N$.  
By the definition of $\Q(\epsilon_\boldA)$, 
$m \leq \epsilon_\boldA(p)$ if and only if 
$(w^{p^m}, w_0) \in \kappa$ for some $w \in F_\omega$.  
At the same time, by Corollary \ref{cor:epsilon.sup}, 
$m \leq \epsilon_\boldA(p)$ if and only if $(p,m) \in E$.  
Since $\kappa$ is c.e., it follows that $E$ is c.e.

Finally, suppose that $\epsilon_\boldA$ is lower semicomputable and take a nondecreasing and uniformly computable sequence $(h_j)_{j \in \N}$ of functions from $\primes$ to $\N$
so that $\epsilon_\boldA = \lim_j h_j$.  Set $g_j = h_j \cdot \chi_{[0, j]}$, where $\chi_{[0,j]}$ is the characteristic function of $[0,j]$.  Then, set 
\[
n_j = \prod_{p \in \primes} p^{h_j(p)}.
\]
Let $\mathbf{B}$ be the inductive limit of $(M_{n_j}(\C),\embed_{n_j,n_{j+1}})$, which is is isomorphic to $\boldA$ by construction.  
By \cite[Lemma 2.7.3]{Goldbring.2024+}, $\mathbf{B}$ is computably presentable, whence so is $\boldA$.
\end{proof}

It follows from the previous theorem that $\mathbf{0''}$ computes the supernatural number of every 
computably presentable \cstar algebra.  
We now give an example of a supernatural number that achieves this upper bound.
Let $(p_e)_{e \in \N}$ be the increasing enumeration of $\primes$.  
For each $e,s$, let $h_s(p_e) = \# W_{e,s}$ and set $\epsilon(p) = \lim_s h_s(p)$.
Thus, by definition, $\epsilon$ is a lower-semicomputable supernatural number.
However, $W_e$ is infinite if and only if $\epsilon(p_e) = \infty$.
At the same time, the Turing degree of 
$\{e\ :\ W_e\ \mbox{is infinite}\}$ is $\mathbf{0''}$.  Therefore, 
any oracle that computes $\epsilon$ computes $\mathbf{0''}$ as well.

\subsection{Computable categoricity}

Recall that a structure is called \emph{computably categorical} if every two computable presentations of the structure are computably isomorphic.  We conclude this paper by showing that all UHF algebras enjoy this property.

\begin{theorem}\label{thm:UHF.comp.cat}
Every UHF algebra is computably categorical.
\end{theorem}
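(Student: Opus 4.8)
The plan is to run a computable version of the classical back-and-forth intertwining that identifies two UHF algebras of the same supernatural number, taking full advantage of the fact that Theorem \ref{thm:comp.UHF} supplies certificates whose connecting maps are \emph{canonical} embeddings. Fix two computable presentations $\boldA^{\#}$ and $\boldA^{\dagger}$ of the UHF algebra $\boldA$. By Theorem \ref{thm:comp.UHF} I would first extract computable UHF certificates $(n_j,\psi_j)_{j\in\N}$ of $\boldA^{\#}$ and $(m_i,\chi_i)_{i\in\N}$ of $\boldA^{\dagger}$ satisfying $\psi_{j+1}^{-1}\psi_j=\embed_{n_j,n_{j+1}}$ and $\chi_{i+1}^{-1}\chi_i=\embed_{m_i,m_{i+1}}$; writing $\boldA_j=\ran(\psi_j)$ and $\boldB_i=\ran(\chi_i)$, both $\bigcup_j\boldA_j$ and $\bigcup_i\boldB_i$ are dense in $\boldA$.

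The next step is to choose, computably, interleaved index sequences $(j_k)$ and $(i_k)$, strictly increasing, realizing the alternating divisibilities $n_{j_0}\mid m_{i_0}\mid n_{j_1}\mid m_{i_1}\mid\cdots$. This is where the shared supernatural number enters: since $\boldA^{\#}$ and $\boldA^{\dagger}$ present the \emph{same} algebra, each $M_{n_j}$ and each $M_{m_i}$ unitally embeds into $\boldA$, so by Proposition \ref{prop:epsilon.embed} and Corollary \ref{cor:epsilon.sup} every $n_j$ and every $m_i$ divides $\epsilon_\boldA$; since $n_j\mid n_{j+1}$ and $m_i\mid m_{i+1}$, any fixed $n_{j_k}$ divides cofinitely many $m_i$, and symmetrically. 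Hence the required indices are found by a terminating search using only the computable sequences $(n_j)$, $(m_i)$ and the decidability of divisibility.

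Using the canonical structure of the certificates I would then define the exact unital $*$-embeddings $\theta_k=\chi_{i_k}\circ\embed_{n_{j_k},m_{i_k}}\circ\psi_{j_k}^{-1}\colon\boldA_{j_k}\to\boldA$ and the reverse maps $\eta_k=\psi_{j_{k+1}}\circ\embed_{m_{i_k},n_{j_{k+1}}}\circ\chi_{i_k}^{-1}\colon\boldB_{i_k}\to\boldA$. The crucial point, and the reason no cross-presentation norm comparison is ever needed, is that these maps are \emph{exactly} (not merely approximately) compatible: a direct computation using $\psi_j=\psi_{j'}\embed_{n_j,n_{j'}}$, $\chi_i=\chi_{i'}\embed_{m_i,m_{i'}}$, and $\embed_{a,c}=\embed_{b,c}\embed_{a,b}$ shows that $\theta_{k+1}|_{\boldA_{j_k}}=\theta_k$, that $\eta_k\theta_k$ is the inclusion $\boldA_{j_k}\hookrightarrow\boldA_{j_{k+1}}$, and that $\theta_{k+1}\eta_k$ is the inclusion $\boldB_{i_k}\hookrightarrow\boldB_{i_{k+1}}$. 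Consequently the $\theta_k$ glue into one isometric unital $*$-homomorphism on the dense subalgebra $\bigcup_k\boldA_{j_k}$, extending to $\phi\colon\boldA\to\boldA$; and since $\boldB_{i_k}=\theta_{k+1}(\eta_k(\boldB_{i_k}))\subseteq\ran(\phi)$ while $\ran(\phi)$ is closed and $\bigcup_i\boldB_i$ is dense, $\phi$ is onto, hence a $*$-isomorphism.

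Finally I would check that $\phi$ is a computable map from $\boldA^{\#}$ to $\boldA^{\dagger}$: given a rational point $a$ of $\boldA^{\#}$ and a tolerance $2^{-\ell}$, use the computable density of $\bigcup_j\boldA_j$ (the Remark following Theorem \ref{thm:comp.UHF}) to find $j$ and a rational $A\in M_{n_j}(\C)$ with $\norm{a-\psi_j(A)}<2^{-\ell-1}$, pick any $k$ with $j_k\geq j$, and output a rational point $b$ of $\boldA^{\dagger}$ within $2^{-\ell-1}$ of $\phi(\psi_j(A))=\chi_{i_k}(\embed_{n_j,m_{i_k}}(A))$, which is a computable point of $\boldA^{\dagger}$ because $\chi_{i_k}$ is; since $\phi$ is isometric this yields $\norm{\phi(a)-b}<2^{-\ell}$. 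The main obstacle to anticipate is precisely the tempting but doomed strategy of matching matrix units across the two presentations directly, which is impossible because one cannot compare norms across two presentations without already having the isomorphism; the resolution is to lean entirely on the canonical-embedding form of the certificates so that the finite-stage maps agree on the nose, and to use the two-sided (alternating) divisibility to guarantee surjectivity.
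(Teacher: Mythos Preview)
Your proposal is correct and follows essentially the same approach as the paper: both extract computable UHF certificates with canonical connecting embeddings via Theorem~\ref{thm:comp.UHF}, interleave the two dimension sequences by alternating divisibilities, define finite-stage maps of the form $\chi\circ\embed\circ\psi^{-1}$ that cohere on the nose, and extend to a computable $*$-isomorphism. The only cosmetic difference is that the paper also builds the reverse maps $\delta_j$ explicitly and deduces bijectivity from $\overline{\gamma}=\overline{\delta}^{-1}$, whereas you verify surjectivity directly from $\boldB_{i_k}\subseteq\ran(\phi)$; and you spell out, via Corollary~\ref{cor:epsilon.sup}, why the divisibility search terminates, a point the paper leaves implicit.
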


\begin{proof}
Suppose $\boldA$ is UHF and suppose that $\boldA^\#$ and $\boldA^\dagger$ are computable presentations of $\boldA$.  By Theorem \ref{thm:comp.UHF}, $\boldA^\#$ has a computable UHF certificate
$(m_j, \phi_j)_{j \in \N}$ satisfying $\phi_{j+1}^{-1}\phi_j = \embed_{m_j, m_{j+1}}$.  
Similarly, $\boldA^\dagger$ has a computable UHF certificate
$(n_j, \psi_j)_{j \in \N}$ satisfying $\psi_{j+1}^{-1}\psi_j = \embed_{n_j, n_{j+1}}$. 
Set $\boldA_j = \ran(\phi_j)$ and $\boldA'_j = \ran(\psi_j)$.

We define sequences $(k_j)_{j \in \N}$ and $(\ell_j)_{j \in \N}$ of positive integers 
by simultaneous recursion as follows.  Set $k_0 = 0$ and let $\ell_j$ be the smallest number greater than $\max\{\ell_{j'}\ :\ j' < j\}$
for which $m_{k_j} | n_{\ell_j}$.
Similarly, let $k_{j+1}$ be the smallest number greater than $\max\{\ell_{k'}\ :\ j' \leq j\}$ for which 
$n_{\ell_j} | m_{k_{j+1}}$.  

Now define $\gamma_j:\boldA_{k_j}\to \boldA'_{l_j}$ and $\delta_j:\boldA'_{l_j}\to \boldA_{k_{j+1}}$ by setting

\begin{eqnarray*}
\gamma_j & = & \psi_{\ell_j} \embed_{m_{k_j}, n_{\ell_j}} \phi_{k_j}^{-1}\mbox{, and}\\
\delta_j & = & \phi_{k_{j+1}} \embed_{n_{\ell_j}, m_{k_{j+1}}} \psi^{-1}_{\ell_j}.
\end{eqnarray*}

Note that $\delta_j\gamma_j : \boldA_{k_j} \to \boldA_{k_{j+1}}$ is the inclusion map $\iota_{k_j, k_{j+1}}$ and likewise that $\gamma_{j+1}\delta_j : \boldA'_{\ell_j} \to \boldA'_{\ell_{j+1}}$ is the inclusion map $\iota'_{\ell_j, \ell_{j+1}}$.  In particular, $\gamma_{j+1}$ extends $\gamma_j$ and $\delta_{j+1}$ extends $\delta_j$.

Let $\gamma = \bigcup_{j \in \N} \gamma_j$, and let $\delta = \bigcup_{j \in \N} \delta_j$.
Since $\gamma_j$ and $\delta_j$ are embeddings, they are $1$-Lipschitz.  
It follows that $\gamma$ has a unique continuous extension $\overline{\gamma}$ to 
$\boldA$ and that $\delta$ has a unique continuous extension $\overline{\delta}$ to 
$\boldA$.  Since each $\gamma_j$ and $\delta_j$ is an embedding, it follows that the extensions $\overline{\gamma}$ and $\overline{\delta}$ are $*$-homomorphisms.  As we have already seen that $\delta_j\gamma_j = \iota_{k_j, k_{j+1}}$ and $\gamma_{j+1}\delta_j = \iota'_{\ell_j, \ell_{j+1}}$, it follows that $\overline{\gamma} = \overline{\delta}^{-1}$.  In particular, $\overline{\gamma}$ and $\overline{\delta}$ are each automorphisms of $\boldA$.

It remains to show that $\overline{\gamma}$ is a computable map from $\boldA^\#$ to $\boldA^\dagger$ and $\overline{\delta}$ is a computable map from $\boldA^\dagger$ to $\boldA^\#$.  As the two proofs are similar, we will only establish the former claim.

Let $\rho$ be a rational point of $\boldA^\#$ and fix $k \in \N$. 
Search for $j \in \N$ and a rational point $\rho'$ of 
$M_{m_{k_j}}(\C)$ so that $\norm{\phi_{k_j}(\rho') - \rho} < 2^{-(k+1)}$.  Since $\boldA = \overline{\bigcup_{j \in \N} \boldA_j}$ 
and $\lim_j k_j = \infty$, 
this search must terminate.  
Since $\overline{\gamma}$ is $1$-Lipschitz, we have
$$\norm{\overline{\gamma}(\rho) - \overline{\gamma}(\phi_{k_j}(\rho'))} < 2^{-(k+1)}.$$ 
Moreover, we have
\[\overline{\gamma}(\phi_{k_j}(\rho')) = \gamma_j(\phi_{k_j}(\rho')) = \psi_{\ell_j}\embed_{k_j, \ell_j}(\rho').\]  
The point $\psi_{\ell_j}\embed_{k_j, \ell_j}(\rho')$ is a computable point of $\boldA^\dagger$ uniformly in $j$ and $\rho'$.
Thus, we may compute a rational point $\rho''$ of $\boldA^\dagger$ so that 
$\norm{\rho'' - \overline{\gamma}(\phi_{k_j}(\rho'))} < 2^{-(k+1)}$.  Thus, 
$\norm{\overline{\gamma}(\rho) - \rho''} < 2^{-k}$ as desired.
\end{proof}

%\section{Conclusion}

\section*{Acknowledgements}  
The work presented here was initiated during the 5-day workshop 23w505 at the Banff International Research Station (BIRS).  The authors thank BIRS for providing an excellent opportunity for collaboration.  
The first-named author also wishes to acknowledge the hospitality of Iowa State University during a visit to the third-named author.

\appendix
\section{The functor $K_0$ in general}
    
It is naturally desirable to extend $K_0$ to a ``computable'' functor from the category of presentations of unital $C^*$-algebras (computable or otherwise)
to the category of semigroup presentations.  However, for any reasonable definition of what it means for a functor of this sort to be computable, 
it seems unlikely that such a thing exists.  Indeed, if it did, it would map computable presentations of unital \cstar-algebras to computable presentations of abelian groups and we believe it is not the case that computable presentability of $\boldA$ implies the computable presentability of $K_0(\boldA)$ (although we have no counterexample at
present).  Consequently, we propose a weaker condition than computability of the functor for which our methods do naturally extend.

We consider three categories of presentations: 
\begin{enumerate}
    \item The category of presentations of semigroups.  In this category, the objects are the 
    semigroup presentations and the morphisms are the computable homomorphisms between semigroup presentations. 

    \item The category of presentations of groups.  In this category, the objects are the group presentations 
    and the morphisms are the computable maps between group presentations. 

    \item The category of presentations of \cstar algebras.  In this category, the objects are the 
  presentations of \cstar algebras and the morphisms are the computable $*$-homomorphisms between presentations.  
\end{enumerate}

Our definition of computability for functors between these categories is based on 
names and name transformations which we define now.

\begin{definition}\label{def:name}
A \emph{name} is a pair $(e,Y) \in \N \times \mathcal{P}(\N)$.
\end{definition}

\begin{definition}\label{def:name.trans}
A \emph{name transformation} is a map $F: \N \times \mathcal{P}(\N) \rightarrow \N \times \mathcal{P}(\N)$ 
so that, whenever $F(e, X_0) = (e'_0, Y_0)$ and $F(e, X_1) = (e_1', Y_1)$, then $e_0' = e_1'$.  
\end{definition}

In the case of semigroup and group presentations, we consider three kinds of names.

\begin{definition}\label{def:names.semig}
Suppose $J^\#$ is a semigroup presentation or a group presentation and let $(e,X)$ be a name.
\begin{enumerate}
    \item $(e,X)$ is a \emph{purely positive name} for $J^\#$ if 
    the kernel of $J^\#$ is $W_e^X$.  

    \item $(e,X)$ is a \emph{purely negative name} for $J^\#$ if 
    the kernel of $J^\#$ is the complement of $W_e^X$.

    \item $(e,X)$ is an \emph{exact name} for $J^\#$ if 
    the kernel of $J^\#$ is computed by $\phi_e^X$.
\end{enumerate}
\end{definition}

We analogously define three kinds of names for \cstar-algebras.

\begin{definition}\label{def:names.alg}
Let $(e,X)$ be a name and
let $\boldA^\#$ be a presentation of a \cstar algebra. 
\begin{enumerate}
    \item $(e,X)$ is a \emph{purely positive name} for $\boldA^\#$ if, 
    for every rational point $\rho$ of $\boldA^\#$, $W_{\phi_e(\rho)}^X$
    is the left Dedekind cut of $\norm{\rho}$.  

    \item $(e,X)$ is a \emph{purely negative name} for $\boldA^\#$ if, 
    for every rational point $\rho$ of $\boldA^\#$, $W_{\phi_e(\rho)}^X$ 
    is the right Dedekind cut of $\norm{\rho}$.

    \item $(e,X)$ is an \emph{exact name} for $\boldA^\#$ if 
    $\phi_e^X$ computes the norm of $\boldA$ on the rational points of 
    $\boldA^\#$.
\end{enumerate}
\end{definition}

Rather than a single concept of a computable functor, we now propose three concepts based on the kinds of 
names used for the objects in the domain and range.

\begin{definition}\label{def:comp.funct}
Suppose that, for each $j \in \{0,1\}$, $\mathcal{C}_j$ is one of the three categories listed above.
Suppose $F$ is a functor from $\mathcal{C}_0$ to $\mathcal{C}_1$.
\begin{enumerate}
    \item $F$ is \emph{lower semicomputable} if there is a computable name transformation $\Phi$ so that 
    $\Phi(\eta)$ is a purely positive name of $F(J^\#)$ whenever $\eta$ is an exact name of $J^\#$. 

    \item $F$ is \emph{upper semicomputable} if there is a computable name transformation $\Phi$ so that 
    $\Phi(\eta)$ is a purely negative name of $F(J^\#)$ whenever $\eta$ is an exact name of $J^\#$.

    \item $F$ is \emph{strongly lower semicomputable} if there is a computable name transformation $\Phi$ so that 
    $\Phi(\eta)$ is a purely positive name of $F(J^\#)$ whenever $\eta$ is a purely positive name of $J^\#$. 
\end{enumerate}
In each of the above cases, we also require that, from an index of a morphism $\phi$ from 
$J_0^\#$ to $J_1^\#$, it is possible to compute an index of $F(\phi)$.
\end{definition}

In computable analysis, a sequence $(a_n)_{n \in \N}$ of reals is lower semicomputable if, from 
$n\in \N$, it is possible to compute an enumeration of the left Dedekind cut of $a_n$.  
Upper semicomputability of a sequence of reals is defined similarly.  These concepts motivate
the choice of terminology in the above definition.

Finally, the uniformity of the arguments in our paper lead to the following:

\begin{theorem}\label{thm:ultra.unif}

\

\begin{enumerate}
    \item $\mathcal{D}$ extends to a strongly lower semicomputable functor from the category of \cstar-algebra presentations
     to the category of semigroup presentations. 

     \item $\mathcal{G}$ extends to a strongly lower semicomputable functor from the category of abelian semigroup
     presentations to the category of abelian group presentations. 

     \item $K$ extends to a strongly lower semicomputable functor from the category of \cstar-algebra presentations to the 
     category of group presentations.
\end{enumerate}
Furthermore, there is a computable function $f : \N \rightarrow \N$ so that, whenever $(e,X)$ is an exact name of a 
presentation $\boldA^\#$ of a stably finite algebra $\boldA$, we have that
$W^X_{f(e)} = \{w\ :\ \mbox{$w$ is a $K^c_0(\boldA^\#)$-notation for an element of $K_0(\boldA)^+$}\}$.
\end{theorem}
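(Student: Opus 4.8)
The plan is to deduce all four assertions from the uniformity already recorded throughout Sections~\ref{sec:K0} and~\ref{sec:K1}, by relativizing each construction to an arbitrary oracle. The guiding observation is that every construction in question is a single fixed algorithm which consumes an enumeration of the norm data of its input and emits an enumeration of (or a decision procedure for) the kernel of its output; reading ``c.e.'' as ``c.e.\ relative to $X$'' leaves the algorithm untouched, so the index it produces is computed from the input index alone while the oracle $X$ is merely threaded through. This is exactly the data of a computable name transformation whose first coordinate is independent of the oracle.

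For~(1), Theorem~\ref{thm:pres.DA} already builds, uniformly in an index for $\boldA^\#$, an index for the kernel of $\mathcal D(\boldA^\#)$, and Theorem~\ref{thm:D(psi).comp} does the same for morphisms. I would define $\Phi_{\mathcal D}(e,X)=(d(e),X)$, where $d$ is the computable function tracking that index transformation; when $(e,X)$ is a purely positive name of $\boldA^\#$, the relativized construction returns $W^X_{d(e)}=\ker(\mathcal D(\boldA^\#))$, so $(d(e),X)$ is a purely positive name of $\mathcal D(\boldA^\#)$. For~(2), the identical treatment of the uniform construction in Proposition~\ref{prop:univ.exist} yields a computable name transformation $\Phi_{\mathcal G}$. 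For~(3), since $K_0=\mathcal G\circ\mathcal D$, I would take $\Phi_{K_0}=\Phi_{\mathcal G}\circ\Phi_{\mathcal D}$; a composite of computable name transformations is again one (the first coordinates compose as $e\mapsto g(d(e))$), and the output format of $\Phi_{\mathcal D}$ is precisely the input format of $\Phi_{\mathcal G}$. The functor $K_1=K_0\circ S$ is handled the same way after relativizing the uniform suspension functor of Lemma~\ref{inducedsuspension}, and the non-unital case is absorbed by relativizing the (equally uniform) unitization step.

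For the furthermore clause I would relativize the proof of Proposition~\ref{prop:cone.ce}. There, from the computable map $F$ witnessing $\mathcal D(\boldA^\#)$-universality of $K_0(\boldA^\#)$---whose index is computable by Definition~\ref{def:univ}---together with the c.e.\ kernel $\ker(\nu_1)$ of $K_0(\boldA^\#)$, one enumerates $\nu_1^{-1}[K_0(\boldA)^+]=\{w\in F_\omega : (w,F(w'))\in\ker(\nu_1)\text{ for some }w'\in F_\omega\}$. Given an exact name $(e,X)$ of a stably finite $\boldA^\#$, the presentation is computable relative to $X$ and hence a fortiori c.e.\ relative to $X$, so the relativized enumeration runs with an index $f(e)$ computed from $e$ alone, and $W^X_{f(e)}$ is exactly the set of $K^c_0(\boldA^\#)$-notations for elements of $K_0(\boldA)^+$. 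As $f$ does not depend on $X$, it is the required computable function.

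The hard part will be purely organizational: I must check that each cited construction relativizes \emph{verbatim}---never inspecting $X$ except as a black-box oracle---so that the output index truly depends only on the input index and not on $X$, and that a purely positive (respectively exact) name supplies exactly the enumeration data the relativized algorithm expects to read. The only genuinely delicate point is the bookkeeping in the composition $\mathcal G\circ\mathcal D$: one must confirm at each stage that the name produced upstream is in the format demanded downstream, using the matching uniformities already noted after Theorems~\ref{thm:mvn.ce},~\ref{thm:pres.DA},~\ref{thm:D(psi).comp} and Propositions~\ref{prop:univ.exist},~\ref{prop:cone.ce}.
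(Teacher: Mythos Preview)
Your proposal is correct and is essentially the same approach as the paper: the paper itself offers no proof beyond the remark that ``the uniformity of the arguments in our paper lead to the following,'' and your relativization argument is precisely the unpacking of that remark. If anything, you have supplied more detail than the paper does, correctly identifying that each cited construction uses its input only as a black-box enumeration, so that the output index is computed from the input index alone while the oracle is threaded through unchanged.
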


\bibliographystyle{amsplain}
\bibliography{paperbib}

\providecommand{\bysame}{\leavevmode\hbox to3em{\hrulefill}\thinspace}
\providecommand{\MR}{\relax\ifhmode\unskip\space\fi MR }
% \MRhref is called by the amsart/book/proc definition of \MR.
\providecommand{\MRhref}[2]{%
  \href{http://www.ams.org/mathscinet-getitem?mr=#1}{#2}
}
\providecommand{\href}[2]{#2}
\begin{thebibliography}{10}

\bibitem{brattka.hertling.2021}
Vasco Brattka and Peter Hertling (eds.), \emph{Handbook of computability and
  complexity in analysis}, Theory and Applications of Computability, Springer,
  Cham, [2021] \copyright 2021. \MR{4300750}

\bibitem{burton2024computable}
Peter Burton, Christopher~J Eagle, Alec Fox, Isaac Goldbring, Matthew
  Harrison-Trainor, Timothy~H McNicholl, Alexander Melnikov, and Teerawat
  Thewmorakot, \emph{Computable gelfand duality}, arXiv preprint
  arXiv:2402.16672 (2024).

\bibitem{CastillejosEtAl.2021}
Jorge Castillejos, Samuel Evington, Aaron Tikuisis, Stuart White, and Wilhelm
  Winter, \emph{Nuclear dimension of simple {$\rm C^*$}-algebras}, Invent.
  Math. \textbf{224} (2021), no.~1, 245--290. \MR{4228503}

\bibitem{Cooper.2004}
S.~Barry Cooper, \emph{Computability theory}, Chapman \& Hall/CRC, Boca Raton,
  FL, 2004.

\bibitem{Davidson.1996}
Kenneth~R. Davidson, \emph{{$\rm C^*$}-algebras by example}, Fields Institute
  Monographs, vol.~6, American Mathematical Society, Providence, RI, 1996.
  \MR{1402012}

\bibitem{FarahEtAll.2021}
Ilijas Farah, Bradd Hart, Martino Lupini, Leonel Robert, Aaron Tikuisis,
  Alessandro Vignati, and Wilhelm Winter, \emph{Model theory of {$\rm
  C^*$}-algebras}, Mem. Amer. Math. Soc. \textbf{271} (2021), no.~1324,
  viii+127. \MR{4279915}

\bibitem{fox2022computable}
Alec Fox, \emph{Computable presentations of {$\rm C^*$}-algebras}, The Journal
  of Symbolic Logic (2022), 1--26.

\bibitem{FoxGoldbringHart.2024+}
Alec Fox, Isaac Goldbring, and Bradd Hart, \emph{Locally universal {$\rm
  C^*$}-algebras with computable presentations}, Preprint available at
  https://arxiv.org/pdf/2303.02301.

\bibitem{Glimm.1960}
James~G. Glimm, \emph{On a certain class of operator algebras}, Trans. Amer.
  Math. Soc. \textbf{95} (1960), 318--340. \MR{112057}

\bibitem{Goldbring.2024+}
Isaac Goldbring, \emph{Computably strongly self-absorbing {$\rm
  C^*$}-algebras}, arXiv preprint arXiv:2409.18834 (2024).

\bibitem{mcnicholl2024evaluative}
Timothy~H. McNicholl, \emph{Evaluative presentations}, arXiv preprint
  arXiv:2403.13616 (2024).

\bibitem{Rordam.Larsen.Laustsen.2000}
M.~R\o~rdam, F.~Larsen, and N.~Laustsen, \emph{An introduction to {$K$}-theory
  for {$\rm C^*$}-algebras}, London Mathematical Society Student Texts,
  vol.~49, Cambridge University Press, Cambridge, 2000. \MR{1783408}

\bibitem{White.2023}
Stuart White, \emph{Abstract classification theorems for amenable {$\rm
  C^*$}-algebras}, I{CM}---{I}nternational {C}ongress of {M}athematicians.
  {V}ol. 4. {S}ections 5--8, EMS Press, Berlin, [2023] \copyright 2023,
  pp.~3314--3338. \MR{4680363}

\end{thebibliography}

\end{document}